\begin{document}

 \baselineskip 16.6pt
\hfuzz=6pt

\widowpenalty=10000

\newtheorem{cl}{Claim}
\newtheorem{theorem}{Theorem}[section]
\newtheorem{proposition}[theorem]{Proposition}
\newtheorem{coro}[theorem]{Corollary}
\newtheorem{lemma}[theorem]{Lemma}
\newtheorem{definition}[theorem]{Definition}
\newtheorem{assum}{Assumption}[section]
\newtheorem{example}[theorem]{Example}
\newtheorem{remark}[theorem]{Remark}
\renewcommand{\theequation}
{\thesection.\arabic{equation}}

\def\SL{\sqrt H}

\newcommand{\mar}[1]{{\marginpar{\sffamily{\scriptsize
        #1}}}}

\newcommand{\as}[1]{{\mar{AS:#1}}}

\newcommand\R{\mathbb{R}}
\newcommand\RR{\mathbb{R}}
\newcommand\CC{\mathbb{C}}
\newcommand\NN{\mathbb{N}}
\newcommand\ZZ{\mathbb{Z}}
\newcommand\HH{\mathbb{H}}
\newcommand\Z{\mathbb{Z}}
\def\RN {\mathbb{R}^n}
\renewcommand\Re{\operatorname{Re}}
\renewcommand\Im{\operatorname{Im}}

\newcommand{\mc}{\mathcal}
\newcommand\D{\mathcal{D}}
\def\hs{\hspace{0.33cm}}
\newcommand{\la}{\alpha}
\def \l {\alpha}
\newcommand{\eps}{\tau}
\newcommand{\pl}{\partial}
\newcommand{\supp}{{\rm supp}{\hspace{.05cm}}}
\newcommand{\x}{\times}
\newcommand{\lag}{\langle}
\newcommand{\rag}{\rangle}

\newcommand\wrt{\,{\rm d}}

\newcommand{\norm}[2]{|#1|_{#2}}
\newcommand{\Norm}[2]{\|#1\|_{#2}}

\title[]{Schatten--Lorentz characterization of Riesz transform commutator associated with Bessel operators}

\author{Zhijie Fan}
\address{Zhijie Fan, School of Mathematics and Information Science,
Guangzhou University, Guangzhou 510006, China}
\email{fanzhj3@mail2.sysu.edu.cn}

\author{Michael Lacey}
\address{Michael Lacey, Department of Mathematics, Georgia Institute of Technology
Atlanta, GA 30332, USA}
\email{lacey@math.gatech.edu}

\author{Ji Li}
\address{Ji Li, Department of Mathematics, Macquarie University, Sydney}
\email{ji.li@mq.edu.au}

\author{Xiao Xiong}
\address{Xiao Xiong, Institute for Advanced Study in Mathematics, Harbin Institute of Technology, 150001 Harbin, China}
\email{xxiong@hit.edu.cn}


 \subjclass[2020]{47B10, 42B20, 43A85}
\keywords{Schatten--Lorentz class, Riesz transform commutator, Bessel operator, Besov space, Nearly weakly orthogonal sequence}

\begin{abstract}
Let $\Delta_\lambda$ be the Bessel operator on the upper half space $\mathbb{R}_+^{n+1}$ with $n\geq 0$ and $\lambda>0$, and $R_{\lambda,j}$ be the $j-$th Bessel Riesz transform, $j=1,\ldots,n+1$.
We demonstrate that the Schatten--Lorentz norm ($S^{p,q}$, $1<p<\infty$, $1\leq q\leq \infty$) of the  commutator $[b,R_{\lambda,j}]$ can be characterized in terms of the oscillation space norm of the symbol $b$. In particular, for the case $p=q$, the Schatten norm of $[b,R_{\lambda,j}]$ can be further characterized in terms of the Besov norm of the symbol. Moreover, the critical index is also studied, which is  $p=n+1$, the lower dimension of the Bessel measure (but not the upper dimension). Our approach relies on martingale and dyadic analysis, which enables us  to bypass the use of Fourier analysis effectively.
\end{abstract}

\maketitle

\tableofcontents

\section{Introduction}
\subsection{Background and motivation}

The study of Schatten--Lorentz class is helpful for better understanding the behavior of compact operators in infinite dimensional spaces, playing a significant role in a variety of problems in many branches of analysis and mathematical quantum theory. In particular, as a more in-depth study characterizing the boundedness and compactness of operators, the establishment of Schatten class characterizations for various types of integral operators has been a central focus in the field of complex analysis for over half a century (see e.g. \cite{AFP,MR1359927,MR1323687,FR,MR3158507,MR899655,MR3010276,MR1766114,MR1087805}). Moreover, attention to this topic has significantly increased within the interdisciplinary realms of functional and harmonic analysis in recent years (see e.g. \cite{MR4660138,MR3212723,MR4312282,MR3650324,MR4552581,MR4654013,MR3494249,MR4687434}).

Originating from the famous Nehari's \cite{Ne} problem in complex analysis, Riesz transform commutator is a type of classical non-convolution singular integral operator. Its Schatten (or weak Schatten) characterization  was investigated by many authors in different contexts (see e.g. \cite{MR4552581,MR4654013,MR4549699,JW,LLW,LMSZ,MSX2018,MSX2019,RS1988,RS}) due to its close connection with quantized derivatives in non-commutative geometry (introduced by Connes \cite[IV]{Connes} and further discussed in \cite{CST,MR4667745, LMSZ, MSX2018, MSX2019}). Specifically, let $[b,T]$ be the commutator of $b$ with an operator $T$ (in this context, the $j$-th Riesz transform $R_j$ on $\mathbb{R}^n$) given by
$[b,T](f)(x):= b(x)T(f)(x) - T(bf)(x).  $

Moreover, let ${d\hspace*{-0.08em}\bar{}\hspace*{0.1em}}b$ be the Euclidean quantised derivatives (see \cite[p.2355]{LMSZ} for the definition). Then ${d\hspace*{-0.08em}\bar{}\hspace*{0.1em}}b$ can be represented via Riesz transform commutator as follows (see   \cite[p.1005]{MR4667745}):
\begin{align}\label{quande}
{d\hspace*{-0.08em}\bar{}\hspace*{0.1em}}b=\sum_{j=1}^n\gamma_j\otimes [b,R_j],
\end{align}
where $\gamma_1,\ldots,\gamma_n$ are self-adjoint complex matrices satisfying the anticommutation relation
$\gamma_j\gamma_k+\gamma_k\gamma_j=2\delta_{j,k},\, 1\leq j,k\leq n,$ and $\delta_{j,k}$ is Kronecker delta.

To illustrate more background, we first recall the definitions of the Lebesgue--Lorentz sequence space and Schatten--Lorentz class. A sequence $\{a_k\}$ is in the $\ell^{p,q}$ for some $0<p<\infty$ and $0<q<\infty$ provided the non-increasing rearrangement of $\{a_k\}$, denoted by $\{a_k^*\}$, satisfies $\sum_{k}a_k^{*q}k^{q/p-1}<+\infty$. For the endpoint $q=\infty$, a sequence $\{a_k\}$ is in the $\ell^{p,\infty}$ for some $0<p<\infty$ provided $\sup_{k}\{k^{1/p}a_k^{*}\}<+\infty$. See also \cite[Proposition 1.4.9]{MR3243734} for equivalent definitions. Furthermore, note that if $T$ is any compact operator on a Hilbert space $H$, then $T^{*}T$ is compact, positive and therefore diagonalizable. For $0<p<\infty$ and $0<q\leq \infty$, we say that $T\in S^{p,q}(H)$ if $\{\lambda_{k}\}\in \ell^{p,q}$, where $\{\lambda_{k}\}$ is the sequence of square roots of eigenvalues of $T^{*}T$ (counted according to multiplicity).

Let ${\rm OSC}_{p,q}(\mathbb R^n)$ and ${\rm B}_{p}(\mathbb R^n)$ be the classical oscillation space and Besov space, respectively (see Definitions \ref{oscdef} and \ref{beso} for specific definitions in an almost parallel setting).
In the classical setting, it was proven by Rochberg--Semmes \cite{RS} that for any $1<p< \infty$ and $1\leq q\leq \infty$, $[b,R_j]\in S^{p,q}(L^2(\mathbb{R}^n)) $ if and only if $b$ is in the oscillation space ${\rm OSC}_{p,q}(\mathbb R^n)$, which together with \eqref{quande}, implies that ${d\hspace*{-0.08em}\bar{}\hspace*{0.1em}}b\in S^{p,q}(L^2(\mathbb{R}^n))$ if and only if $b$ is in ${\rm OSC}_{p,q}(\mathbb R^n)$. In particular, for $p=q$, there holds:

(1) In the case of dimension $n=1$, one has $[b,\mathcal{H}]\in S^p(L^2(\mathbb{R})) $ if and only if $b\in {\rm B}_{p}(\mathbb R)$, where $\mathcal{H}$ is the Hilbert transform and $0<p<\infty$ (see \cite{P}).

(2) In the case of dimension $n\geq 2$,  one has $[b,R_j]\in S^p(L^2(\mathbb{R}^n)) $ if and only if $b\in {\rm B}_{p}(\mathbb R^n)$ when $p>n$, whereas $[b,R_j]\in S^p(L^2(\mathbb{R}^n)) $ if and only if $b$ is a constant when $0<p\leq n$ (see \cite{JW,RS1988,RS}).

Schatten class characterizations have been established for many significant operators in complex analysis, such as Szeg\H{o} projection \cite{FR}, big and little Hankel operators on the unit ball and  Heisenberg group \cite{FR},  big Hankel operator on Bergman space of the disk  \cite{AFP}, and  Hankel operators on the Bergman space of the unit ball \cite{MR1087805}, Riesz transform commutators on Heisenberg group \cite{MR4552581}. Bessel operator is another important model arising in the study of complex analysis and partial differential equations (see e.g. \cite{MR64284,MS}).  For $\lambda> 0$, the one-dimensional Bessel operator  on $\R^{1}_+:= (0,\infty)$ (see \cite{MS})  is defined by
\begin{align*}
\Delta_{\lambda}^{(1)} := -{d^2\over dx^2} -{2\lambda\over x} {d\over dx}.
\end{align*}
In higher dimension, we also consider the $(n+1)$-dimensional Bessel operator on $\R^{n+1}_+:=\R^n\times (0,\infty)$ ($n\geq 1$) from a seminal work of Huber \cite{MR64284}, which is defined by
\begin{align}\label{Dlambda}
\Delta_\lambda^{(n+1)} := -{\partial^2\over \partial x_1^2}\cdots-{\partial^2\over \partial x_n^2} -{\partial^2\over \partial x_{n+1}^2} -{2\lambda\over x_{n+1}} {\partial\over \partial x_{n+1}}.
\end{align}
The operators $\Delta_\lambda^{(1)}$ and $\Delta_\lambda^{(n+1)}$ are densely-defined non-negative self-adjoint operators on $L^2(\mathbb R_+, dm_\lambda^{(1)})$ and $ L^2(\mathbb{R}_+^{n+1}, dm_\lambda^{(n+1)})$, respectively,
 where $d m_\lambda^{(1)}(x):=x^{2\lambda}dx$ and
$$dm_\lambda^{(n+1)}(x):=\prod_{j=1}^n dx_j x_{n+1}^{2\lambda}dx_{n+1}.$$ 
For simplicity, we will introduce the following unified notations in the sequel.
$$\Delta_\lambda:=\left\{\begin{array}{ll}\Delta_\lambda^{(1)}, &n=0,\\ \Delta_\lambda^{(n+1)}, &n\geq 1,\end{array}\right.\ \ {\rm and}\ \ m_\lambda:=\left\{\begin{array}{ll}m_\lambda^{(1)}, &n=0,\\ m_\lambda^{(n+1)}, &n\geq 1.\end{array}\right.$$

Weinstein \cite{MR53289} and Huber \cite{MR64284} considered a singular Laplace equation associated with this operator, which is given by
\begin{equation}\label{egg}
\Delta_{t, x}(u) := -\frac{\partial^2}{\partial t^2} u - \frac{\partial^2}{\partial x^2} u - \frac{2\lambda}{x} \frac{\partial}{\partial x} u = 0.
\end{equation}
They conducted an in-depth study on the generalised axially symmetric potentials, and solved fundamental problems related to the solutions of this equation, such as uniqueness problem,  extension problems   and boundary value problem for certain domains. Later, Muckenhoupt and Stein \cite{MS} introduced a notion of conjugacy  associated with $\Delta_{\lambda}^{(1)}$ and established  $L^p(\mathbb{R}_{+},dm_\lambda^{(1)})-$boundedness of Bessel Riesz transform, conjugate function, and fractional integrals associated with $\Delta_\lambda^{(1)}$. To further enrich this theory, lots of significant results in classical harmonic analysis has been established in the Bessel setting over the last twenty decades (See \cite{MR3922045,MR2496404,MR4361597,MR3829612,MR3689327,MR4231681,MR3481176} and the references therein). Some closely related works include the characterization of boundedness and compactness for Riesz transform commutator $[b,R_{\lambda,j}]$ (See \cite{DGKLWY,MR3829612,MR3689327}), where  $R_{\lambda,j}$ denotes the $j$-th Bessel Riesz transform 
$$ R_{\lambda,j} := {\partial\over \partial x_j} \Delta_\lambda^{-{1\over2}},\quad j=1,\ldots,n+1. $$
\subsection{Our aim}
To the best of our knowledge, the only known result regarding the Schatten--Lorentz class characterization for Riesz transform commutator is due to a pioneering work of Rochberg--Semmes \cite{RS}, where they  established such a  characterization in the classical setting. Despite the establishment of Schatten (and weak-Schatten) class characterizations for Riesz transform commutators in several contexts over the past decade, exploration of their Schatten--Lorentz class characterization remains largely unexplored. Moreover, for the Riesz transform commutator associated with the Bessel operator, there have been no results regarding their Schatten--Lorentz class characterization, even in the special case $p=q$ (i.e. Schatten class characterization).
The aim of this article is to establish  the Schatten--Lorentz characterization for Riesz transform commutator
$[b,R_{\lambda,j}]$ as mentioned above.

\subsection{Statement of main results}

\begin{definition}\label{oscdef}
Suppose $0<p<\infty$, $0<q\leq \infty$, $\lambda>0$ and $n\geq 0$. Then we say that $f$ belongs to oscillation space ${\rm OSC}_{p,q}(\mathbb{R}_+^{n+1},dm_\lambda)$ if $$\|f\|_{{\rm OSC}_{p,q}(\mathbb{R}_+^{n+1},dm_\lambda)}:=\sum_{\nu=1}^\kappa\|\{MO_Q(f)\}_{Q\in\mathcal{D}^\nu}\|_{\ell^{p,q}}<+\infty,$$
where $\{\mathcal{D}^\nu\colon \nu=1,2,\ldots ,\kappa\}$ is a collection of
adjacent systems of dyadic cubes over $\mathbb{R}_{+}^{n+1}$ (see Section \ref{s2} for precise definition), and $MO_E(f):=\fint_{E}|f(x)-(f)_E|dm_\lambda(x)$ denotes the mean oscillation of $f$ over a measurable set $E$.
\end{definition}

In the sequel, we denote $S_\lambda^{p,q}:=S^{p,q}(L^2(\mathbb{R}_{+}^{n+1},dm_\lambda))$ for short. Then our first main theorem can be stated as follows.

\begin{theorem}\label{mainLor}
Suppose $1<p< \infty$, $1\leq q\leq \infty$, $\lambda>0$, $n\geq 0$ and $b\in L^p_{\rm loc}(\mathbb{R}^{n+1}_+,dm_\lambda)$. Then for any $\ell\in\{1,2,...,n+1\}$, we have
\begin{align*}
\|[b,R_{\lambda,\ell}]\|_{S_\lambda^{p,q}}\approx\|b\|_{{\rm OSC}_{p,q}(\mathbb{R}_+^{n+1},dm_\lambda)}.
\end{align*}
\end{theorem}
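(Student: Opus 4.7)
The plan is to establish both directions of the equivalence $\|[b,R_{\lambda,\ell}]\|_{S_\lambda^{p,q}}\approx\|b\|_{{\rm OSC}_{p,q}(\mathbb{R}_+^{n+1},dm_\lambda)}$ entirely within the dyadic/martingale framework on the space of homogeneous type $(\mathbb{R}^{n+1}_+,dm_\lambda)$. The workhorse will be a nearly weakly orthogonal (NWO) sequence criterion for Schatten--Lorentz norms: if an operator can be written as $\sum_{Q}\gamma_{Q}\,e_{Q}\otimes f_{Q}$ where $\{e_{Q}\},\{f_{Q}\}$ (indexed by the adjacent dyadic systems $\mathcal{D}^{\nu}$) are NWO on $L^{2}(\mathbb{R}_{+}^{n+1},dm_\lambda)$, then its $S^{p,q}_\lambda$-norm is comparable to $\|\{\gamma_Q\}\|_{\ell^{p,q}}$. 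Summing over the finite collection $\{\mathcal{D}^{\nu}:\nu=1,\ldots,\kappa\}$ turns cube-indexed estimates into the oscillation norm of Definition~\ref{oscdef}.

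For the sufficiency direction, I would decompose the symbol as $b=\sum_{\nu=1}^{\kappa}\sum_{Q\in\mathcal{D}^{\nu}} D_{Q}b$, where $D_{Q}$ is a martingale-difference projection adapted to $dm_{\lambda}$. Insertion into $[b,R_{\lambda,\ell}]$ produces an atomic decomposition whose pieces $[D_{Q}b,R_{\lambda,\ell}]$ can be represented, after exploiting the kernel smoothness of the Bessel Riesz transform, as rank-one-type operators $\gamma_{Q}\,e_{Q}\otimes f_{Q}$ with $|\gamma_{Q}|\lesssim MO_{Q}(b)$ and with $\{e_Q\},\{f_Q\}$ NWO on the Bessel space. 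Applying the NWO criterion yields $\|[b,R_{\lambda,\ell}]\|_{S_\lambda^{p,q}}\lesssim\sum_{\nu}\|\{MO_{Q}(b)\}_{Q\in\mathcal{D}^{\nu}}\|_{\ell^{p,q}}$.

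For the necessity direction, I would argue by duality/testing: for each cube $Q$ I construct a pair $(g_{Q},h_{Q})$ of $L^{2}(dm_{\lambda})$-normalized functions, both supported in bounded dilates of $Q$ (with $h_Q$ chosen to live on a sibling or a far but proportional cube $\widetilde Q$), so that the non-degeneracy and sign-preserving behavior of the Bessel Riesz kernel on well-separated pairs gives
\[
MO_{Q}(b)\lesssim \bigl|\bigl\langle [b,R_{\lambda,\ell}]g_{Q},\,h_{Q}\bigr\rangle\bigr|.
\]
Since the families $\{g_Q\}$ and $\{h_Q\}$ can be taken NWO, the converse direction of the NWO criterion then delivers $\|\{MO_{Q}(b)\}\|_{\ell^{p,q}}\lesssim\|[b,R_{\lambda,\ell}]\|_{S_\lambda^{p,q}}$, completing the equivalence after summing over the adjacent systems.

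The main obstacle, and the reason the critical index is $n+1$ rather than $n+1+2\lambda$, is the genuinely non-doubling-like behavior of the Bessel measure: $dm_{\lambda}$ has lower dimension $n+1$ but upper dimension $n+1+2\lambda$, so the kernel of $R_{\lambda,\ell}$ does not satisfy standard Calder\'on--Zygmund bounds with a single homogeneous dimension. One must design the martingale differences $D_{Q}$ and the testing pairs $(g_{Q},h_{Q})$ so that \emph{cancellation}, rather than raw size, is used whenever the ambient cube touches the boundary $x_{n+1}=0$; the pointwise size estimates which control the NWO property must then be expressed in terms of $m_{\lambda}(Q)$ rather than $|Q|^{1+2\lambda/(n+1)}$. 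Verifying the NWO property for the resulting sequences on $L^{2}(dm_{\lambda})$, and pushing the kernel estimates through this anisotropy, is where the bulk of the technical work should go.
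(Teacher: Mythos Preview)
Your sufficiency (upper bound) sketch has a real gap. The pieces $[D_Q b, R_{\lambda,\ell}]$ are \emph{not} rank-one: their kernels $(D_Q b(x)-D_Q b(y))K_{\lambda,\ell}(x,y)$ do not separate variables, and ``exploiting kernel smoothness'' does not collapse them to a single tensor $\gamma_Q\,e_Q\otimes f_Q$. The paper obtains the NWO expansion by a different route and never decomposes $b$. Instead it Whitney-decomposes the off-diagonal $(\mathbb{R}_+^{n+1}\times\mathbb{R}_+^{n+1})\setminus\{x=y\}$ and, on each Whitney rectangle $P=Q_P\times\hat Q_P$, expands the \emph{kernel} $K_{\lambda,\ell}(x,y)\chi_{Q_P}(x)\chi_{\hat Q_P}(y)$ itself into a rapidly convergent sum of separated terms via weighted Alpert wavelets with high vanishing moments (this is where the high-order smoothness of Lemma~\ref{CZO} is actually used). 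Multiplying by $b(x)-b(y)=(b(x)-(b)_{Q_P})-(b(y)-(b)_{Q_P})$ then yields an honest NWO sum with coefficients controlled by an $L^3$-oscillation of $b$ over a bounded dilate of $Q_P$; a separate summability self-improvement lemma (Section~\ref{Sec3}, which uses the Bessel heat semigroup as a substitute for convolution mollifiers) is needed to pass from $MO_Q^3(b)$ back to $MO_Q(b)$ and close the estimate. Your outline is missing both the kernel-side Alpert expansion and this self-improvement step.

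For the lower bound your testing idea is in the right spirit, but the paper's implementation is different and sidesteps the error terms coming from kernel variation on $Q\times\hat Q$. Rather than pairing $[b,R_{\lambda,\ell}]$ against normalized bumps, the paper builds, for each $Q$, an operator $L_Q$ whose kernel carries the factor $K_{\lambda,\ell}(y,x)^{-1}$ on $Q\times\hat Q$ (well defined by the non-degenerate lower bound of Lemma~\ref{sign}), so that $\mathrm{Trace}\big([b,R_{\lambda,\ell}]L_Q\big)$ is \emph{exactly} the oscillation integral. One then dualizes $\ell^{p,q}/\ell^{p',q'}$ and bounds $\bigl\|\sum_Q a_Q L_Q\bigr\|_{S_\lambda^{p',q'}}$ by expanding $K_{\lambda,\ell}^{-1}$ (again via Alpert wavelets, now using Lemmas~\ref{CZO} and~\ref{sign} together) into NWO pieces. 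One small correction to your diagnosis: the Bessel measure \emph{is} doubling (see \eqref{doub}); the only subtlety is that its upper and lower dimensions differ, which matters for the critical index in Theorem~\ref{schatten1} but not for the doubling-based NWO machinery invoked here.
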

Our next result demonstrates that for the diagonal case $p=q$, the Schatten norm of these commutators can be further characterized in terms of Besov norm of the symbol, which extends the  classical Euclidean  results of Peller \cite{P}, Janson--Wolff \cite{JW} and Rochberg--Semmes \cite{RS} to the Bessel setting. To illustrate this result,
for any $x\in\mathbb{R}_+^{n+1}$ and $r>0$, denote by $B_{\mathbb{R}_{+}^{n+1}}(x,\,r):=B(x,\,r)\cap \mathbb{R}_+^{n+1} $ the Bessel ball in $\mathbb{R}_{+}^{n+1}$, where $B(x,\,r)$ is a Euclidean ball with center $x$ and radius $r$. We next provide the definition of $B_{p}(\mathbb{R}_+^{n+1},dm_\lambda)$ norm, which is a natural extension of the classical Besov norm $B_{p,q}^{\alpha}(\mathbb{R}^{n+1})$  in the Bessel setting, with $q=p$ and $\alpha=\frac{n+1}{p}\in (0,1)$.
\begin{definition}\label{beso}
Suppose $n+1< p< \infty$, $\lambda>0$ and $n\geq 0$.  Then we say that a  function $f\in L^p_{{\rm loc}}(\mathbb{R}_+^{n+1},dm_\lambda)$ belongs to weighted Besov space $B_{p}(\mathbb{R}_+^{n+1},dm_\lambda)$ if
\begin{align*}
\|f\|_{B_{p}(\mathbb{R}_+^{n+1},dm_\lambda)}:=\left(\int_{\mathbb{R}_{+}^{n+1}}\int_{\mathbb{R}_{+}^{n+1}}\frac{|f(x)-f(y)|^p}{{m_\lambda(B_{\mathbb{R}_+^{n+1}}}(x,\,|x-y|))^2}dm_\lambda(y)dm_\lambda(x)\right)^{1/p}<+\infty.
\end{align*}
\end{definition}

\begin{theorem}\label{schatten}
Suppose $n+1<p<\infty$, $\lambda>0$, $n\geq 0$ and $b\in  L^p_{{\rm loc}}(\mathbb R_+^{n+1},dm_\lambda)$. Then for any $\ell\in\{1,2,...,n+1\}$, we have  $[b,R_{\lambda,\ell}]\in S_\lambda^p$
if and only if $b\in B_{p}(\mathbb R_+^{n+1},dm_\lambda)$.  Moreover, we have $$ \|[b,R_{\lambda,\ell}]\|_{S_\lambda^p}\approx \|b\|_{B_{p}(\mathbb R_+^{n+1},dm_\lambda)}.$$

\end{theorem}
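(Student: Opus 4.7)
The plan is to deduce Theorem \ref{schatten} from Theorem \ref{mainLor} via a purely function-theoretic equivalence. Specializing Theorem \ref{mainLor} to the diagonal case $q=p$ gives $\|[b,R_{\lambda,\ell}]\|_{S_\lambda^p}\approx \|b\|_{{\rm OSC}_{p,p}(\mathbb{R}_+^{n+1},dm_\lambda)}$, so it suffices to prove the equivalence $\|b\|_{{\rm OSC}_{p,p}(\mathbb{R}_+^{n+1},dm_\lambda)} \approx \|b\|_{B_p(\mathbb{R}_+^{n+1},dm_\lambda)}$ under the hypothesis $p>n+1$.

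For the easy direction $\|b\|_{{\rm OSC}_{p,p}}\lesssim \|b\|_{B_p}$, I fix an adjacent dyadic system $\mathcal{D}^\nu$. Jensen's inequality yields $MO_Q(b)^p \leq m_\lambda(Q)^{-2}\int_Q\int_Q|b(x)-b(y)|^p\,dm_\lambda(x)\,dm_\lambda(y)$, and the doubling of $m_\lambda$ gives $m_\lambda(Q)\gtrsim m_\lambda(B_{\mathbb{R}_+^{n+1}}(x,|x-y|))$ whenever $x,y\in Q$. Summing over $Q\in\mathcal{D}^\nu$ and interchanging the order of summation and integration, the key step is the ancestor bound $\sum_{Q\ni x,y}m_\lambda(Q)^{-2}\lesssim m_\lambda(B_{\mathbb{R}_+^{n+1}}(x,|x-y|))^{-2}$, which follows from a convergent geometric series driven by the positive lower dimension of $m_\lambda$. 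Summing finally over $\nu=1,\dots,\kappa$ produces the Besov norm.

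For the hard direction $\|b\|_{B_p}\lesssim\|b\|_{{\rm OSC}_{p,p}}$, I first pass through the stronger $L^p$-mean oscillation $MO_{Q,p}(b):=(\fint_Q |b-(b)_Q|^p\,dm_\lambda)^{1/p}$. A chain argument using the adjacent systems---for each pair $(x,y)$ one selects $\nu$ and $Q(x,y)\in\mathcal{D}^\nu$ of side length $\approx |x-y|$ containing both points---combined with the triangle inequality $|b(x)-b(y)|\leq |b(x)-(b)_{Q(x,y)}|+|b(y)-(b)_{Q(x,y)}|$ yields $\|b\|_{B_p}^p \lesssim \sum_{\nu}\sum_{Q\in\mathcal{D}^\nu} MO_{Q,p}(b)^p$. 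The remaining comparison $\sum_Q MO_{Q,p}(b)^p \lesssim \sum_Q MO_Q(b)^p$ is then obtained via a Calder\'on--Zygmund stopping-time argument: on each base cube $Q$, decompose $b-(b)_Q$ at a threshold $\tau\, MO_Q(b)$ to produce stopping children with total measure $\lesssim \tau^{-1}m_\lambda(Q)$ on which the mean oscillation has grown by a controlled factor, and iterate to express $\int_Q|b-(b)_Q|^p\,dm_\lambda$ as a geometric sum over tree generations.

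The main obstacle is precisely this last iteration: the geometric series driving the Calder\'on--Zygmund scheme converges exactly when $p>n+1$, a threshold dictated by the \emph{lower} dimension of the Bessel measure rather than by the upper dimension $n+1+2\lambda$. This is the critical index advertised in the abstract, and also explains why the Besov characterization must fail (forcing $b$ to be constant) once $p\leq n+1$.
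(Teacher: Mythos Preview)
Your high-level plan matches the paper exactly: invoke Theorem~\ref{mainLor} with $q=p$ and then establish $\mathrm{OSC}_{p,p}\approx B_p$ (this is the paper's Proposition~\ref{beeee1}). Your argument for $\|b\|_{\mathrm{OSC}_{p,p}}\lesssim\|b\|_{B_p}$ via Jensen and the ancestor sum is essentially the paper's, and your reduction of the reverse inequality to the self-improvement estimate $\sum_\nu\sum_{Q\in\mathcal D^\nu} MO_{Q,p}(b)^p\lesssim\sum_\nu\sum_{Q\in\mathcal D^\nu} MO_Q(b)^p$ is correct and again parallels the paper.

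The gap lies in this last self-improvement step and in your reading of the role of $p>n+1$. The estimate you need is precisely Lemma~\ref{technic} (take $r=p$, $q=p$ there), and that lemma holds for \emph{every} $1<p<\infty$; the paper proves it by Bessel heat-semigroup smoothing followed by Schur's test on a discrete kernel, not by a Calder\'on--Zygmund stopping-time iteration. Your CZ sketch does not close as written: after summing the single-cube bound $MO_{Q,p}(b)^p\lesssim \tau^p MO_Q(b)^p+\sum_{R\in\mathcal F(Q)}\frac{m_\lambda(R)}{m_\lambda(Q)}MO_{R,p}(b)^p$ over all $Q$ and interchanging, one must control, for each fixed $R$, the quantity $\sum_{Q:\,R\in\mathcal F(Q)} m_\lambda(R)/m_\lambda(Q)$, and nothing in the stopping construction bounds this uniformly in $b$. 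More importantly, there is no geometric series in any correct version of this argument whose convergence hinges on $p>n+1$: the equivalence $\mathrm{OSC}_{p,p}\approx B_p$ holds for all $1<p<\infty$, and the paper's proof of Proposition~\ref{beeee1} never uses the restriction. The hypothesis $p>n+1$ in Theorem~\ref{schatten} does not enter the equivalence proof at all; it is present only because for $p\le n+1$ both norms force smooth symbols to be constant (Theorem~\ref{schatten1} and the Appendix), so the characterization would be vacuous. Your final paragraph therefore misattributes the critical-index phenomenon.
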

Parallel to \cite{JW,RS}, it is natural to explore the cut--off point phenomenon in the Bessel setting.
The following theorem illustrates that under a priori assumption $b\in C^2(\mathbb{R}^{n+1}_+)$ with $n\geq 1$ (whether this assumption can be removed or not is still open), the Besov norm (or more generally, the {\rm OSC} norm) of $b$ also collapses to  constants for small index $p$ as in the classical setting.
\begin{theorem}\label{schatten1}
Let $\lambda>0$, $n\geq 1$ and $b\in C^2(\mathbb{R}^{n+1}_+)$. Suppose that $0<p<n+1$, $0<q\leq \infty$, or $p=n+1$, $0< q<\infty$. Then for any $\ell\in\{1,2,...,n+1\}$, we have  $[b,R_{\lambda,\ell}]\in S_\lambda^{p,q}$
if and only if $b$ is a constant on $\mathbb{R}_+^{n+1}$.

In particular, for any $0<p\leq n+1$, $[b,R_{\lambda,\ell}]\in S_\lambda^{p}$
if and only if $b$ is a constant on $\mathbb{R}_+^{n+1}$.
\end{theorem}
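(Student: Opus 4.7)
The forward direction is immediate: if $b$ is constant, then $[b, R_{\lambda,\ell}]=0$ lies in every $S^{p,q}_\lambda$. For the converse, my plan is to pair Theorem~\ref{mainLor} with the elementary Lorentz embedding $S^{p_1, q_1}_\lambda \hookrightarrow S^{p_2, q_2}_\lambda$ (valid whenever $p_1 < p_2$, or when $p_1 = p_2$ and $q_1 \leq q_2$) so as to push the hypothesis into the parameter range covered by Theorem~\ref{mainLor}, and then to exhibit a volume-counting obstruction forcing $b$ to be constant.

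Specifically, assume $[b, R_{\lambda,\ell}] \in S^{p,q}_\lambda$. When $0 < p < n+1$, choose any $r$ with $\max(p,1) < r < n+1$, which is possible because $n+1 \geq 2$; the embedding yields $[b, R_{\lambda,\ell}] \in S^{r,\infty}_\lambda$, and Theorem~\ref{mainLor} then gives $\|b\|_{{\rm OSC}_{r,\infty}(\mathbb{R}_+^{n+1}, dm_\lambda)} < \infty$. When $p = n+1$ and $0 < q < \infty$, set $q' = \max(q,1)$; the embedding gives $[b, R_{\lambda,\ell}] \in S^{n+1, q'}_\lambda$ with $1 \leq q' < \infty$, and Theorem~\ref{mainLor} yields $\|b\|_{{\rm OSC}_{n+1, q'}(\mathbb{R}_+^{n+1}, dm_\lambda)} < \infty$. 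The task is thereby reduced to showing that a non-constant $C^2$ symbol cannot have finite ${\rm OSC}$ norm at either Lorentz index.

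To this end, suppose for contradiction that $\nabla b(x_0) \neq 0$ at some $x_0 = (x_0', t_0) \in \mathbb{R}_+^{n+1}$ with $t_0 > 0$. By continuity, $|\nabla b| \geq c > 0$ on a small Euclidean ball $U$ centered at $x_0$ and bounded away from $\{x_{n+1} = 0\}$. For every sufficiently small dyadic cube $Q \subset U$ of side length $r_Q$, a first-order Taylor expansion combined with $|\nabla b| \geq c$ yields $MO_Q(b) \gtrsim r_Q$, where the Bessel average coincides with the Lebesgue average up to a constant since $x_{n+1} \sim t_0$ on $U$. For the same reason, the number of cubes in $\mathcal{D}^\nu$ of side length $\sim 2^{-k}$ contained in $U$ is $\gtrsim 2^{k(n+1)}$. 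Summing across scales, the non-increasing rearrangement $\{a_J^*\}$ of $\{MO_Q(b)\}_{Q \in \mathcal{D}^\nu}$ satisfies $a_J^* \gtrsim J^{-1/(n+1)}$ for arbitrarily large $J$. A direct computation of the Lorentz quasinorm of $\{J^{-1/(n+1)}\}$ then shows this sequence is not in $\ell^{r, \infty}$ when $r < n+1$, and not in $\ell^{n+1, q'}$ when $q' < \infty$; both contradict the finiteness above, so $\nabla b \equiv 0$ on the connected open set $\mathbb{R}_+^{n+1}$ and $b$ is constant. The special $S^p$ case $0 < p \leq n+1$ is subsumed, since $p = q = n+1$ falls into the second alternative with $q < \infty$.

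The main obstacle I anticipate is verifying the counting estimate rigorously within the adjacent dyadic framework for the Bessel measure: one must confirm that restricting $\mathcal{D}^\nu$ to the interior region $U$ produces a Bessel-regular family with the asserted $\sim 2^{k(n+1)}$ cubes, and that the weight $x_{n+1}^{2\lambda}$ does not spoil the Lebesgue-type oscillation lower bound $MO_Q(b) \gtrsim r_Q$. Both reduce to the observation that $dm_\lambda$ is a smooth multiple of Lebesgue measure, bounded above and below, on any compact subset of the open half-space --- precisely the lower-dimensional behaviour of the Bessel measure in the interior, which is the geometric reason $n+1$ (and not $n+1+2\lambda$) is the critical index. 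The excluded endpoint $p=n+1$, $q=\infty$ falls outside the scope of this argument because $\|\{J^{-1/(n+1)}\}\|_{\ell^{n+1,\infty}}$ is in fact finite, which is consistent with the statement of the theorem.
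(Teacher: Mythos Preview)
Your proposal is correct and follows essentially the same route as the paper: reduce via Lorentz embedding and Theorem~\ref{mainLor} to finiteness of an ${\rm OSC}$ norm, then derive a contradiction from the $\gtrsim 2^{k(n+1)}$ dyadic cubes of scale $2^{-k}$ near a point where $\nabla b\neq 0$, each satisfying $MO_Q(b)\gtrsim \ell(Q)$ by Taylor expansion. The paper organizes the counting step through auxiliary subcubes $A_Q,B_Q$ (Lemmas~\ref{twocubes}--\ref{lowerboundcommu}) and packages it as Proposition~\ref{aux}, but the substance---including your observation that $dm_\lambda$ is comparable to Lebesgue measure on interior compacta, which is why $n+1$ rather than $n+1+2\lambda$ is critical---is identical.
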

\begin{remark}
It is worth noting that $p=n+1$ is the critical index in Theorem \ref{schatten1}. That is, there does not exist any $p_0 > n+1$ such that if $[b,R_{\lambda,\ell}]\in S_\lambda^{p}$ (or more generally, $[b,R_{\lambda,\ell}]$ is in the Schatten--Lorentz class) for some $n+1<p\leq p_0$, then $b$ is a constant. This fact can be seen by Theorem \ref{schatten} in combination with the Appendix (Section \ref{Appe}). It is also worthwhile to note that $n+1$ is the lower dimension of $(\mathbb{R}_{+}^{n+1},\,|\cdot |,\,dm_\lambda)$, which did not play any role when characterizing the boundedness \cite{DGKLWY} and compactness \cite{MR3829612} of the Riesz transform commutator.
\end{remark}

\subsection{Difficulties and our strategies}

Our strategy of characterizing the Schatten--Lorentz norm is to use the nearly weakly orthogonal sequences (NWO) as introduced in \cite{RS}. Comparing to the framework in standard Euclidean space \cite{RS}, there are two main difficulties in the establishment of Schatten--Lorentz estimate of Riesz transform commutator in the Bessel setting.

$\bullet$ Firstly, Bessel--Riesz transform is a non-convolution type singular integral with Bessel measure, which means that Fourier analysis cannot be applied as effectively as in the classical setting. To overcome this difficulty, we will initiate a flexible approach to decompose the Bessel--Riesz transform kernel and its inverse locally into summations of NWO sequences with variable-separable forms and  suitable decay factors (see Lemmas \ref{jqss} and \ref{jq}). This will be achieved by combining a very recent ingenious construction of weighted Alpert wavelet established in \cite{MR4179877} with the derivation of a refined lower bound of Riesz transform kernel.

$\bullet$ Secondly, Bessel measure is not invariant under translation. Furthermore, $(\mathbb{R}_{+}^{n+1},\,|\cdot |,\,dm_\lambda)$ is a reverse doubling space, where the upper dimension differs from the lower one, in contrast to a regular space where both dimensions are equal. Consequently, even for compactly supported smooth function $\varphi$, an equality like $\frac{1}{t^{n+1}}\int_{\mathbb{R}_+^{n+1}}\varphi(\frac{x-y}{t})dm_\lambda(y)=1$ is not valid. To the best of our knowledge, similar issues never appear in the existing literature concerning Schatten (or weak Schatten) class characterization of Riesz transform commutators. To surmount these obstacles, we use the Bessel heat kernel as a suitable substitution of Euclidean convolution with compactly supported smooth function.

\subsection{Notation and organization of the paper}
The paper is organized as follows. Section \ref{Prel} consists of five parts:
the first part recalls the concept of adjacent systems of dyadic cubes on  spaces of homogenous type; the second part recalls the celebrated construction of weighted Alpert wavelets for $L^2(\mathbb{R}^n,\mu)$; the third part recalls the useful notion of nearly weakly orthogonal sequences of functions, and recall their closed relationship with Schatten--Lorentz estimate of compact operator on Hilbert space; the four and five parts establish the Heat kernel estimate and Riesz transform kernel estimate associated with Bessel operator, respectively, including pointwise estimates and smoothness estimates. At the end of the five part, we also establish a non-degenerate lower bound of Riesz transform kernel associated with Bessel operator. In Section \ref{Sec3}, we  establish a summability self-increased Lemma, which states that the definition of oscillation space norm does not depend on the integrability index of mean oscillation. In Sections \ref{Sec4} and \ref{Sec5}, we establish the upper bound and lower bound of Theorem \ref{mainLor}, respectively. In Section \ref{Sec6}, we build the bridge  between the oscillation space  and Besov space in the Bessel setting, and then provide the proof of Theorems \ref{schatten} and \ref{schatten1}.

\section{Notation and Preliminaries}\label{Prel}
\setcounter{equation}{0}

\subsection{Notation}
Conventionally, we set $\mathbb{N}$ be the set of positive integers and $\mathbb{Z}_+:=\{0\}\cup\mathbb{N}$. Throughout the whole paper, we denote by $C$ a positive constant which is independent of the main parameters, but it may
vary from line to line. We use $A\lesssim B$ to denote the statement that $A\leq CB$ for some constant $C>0$, and $A\thicksim B$ to denote the statement that $A\lesssim B$ and $B\lesssim A$. For any $\lambda>0$ and any Bessel ball $B=B_{\mathbb{R}_+^{n+1}}(x_B,\,r_B)$, we use the notation $\lambda B$ to denote the ball centered at $x_B$ with radius $\lambda r_B$. For any $1\leq p\leq\infty$, we denote by $p'$ the conjugate of $p$, which satisfies $\frac{1}{p}+\frac{1}{p'}=1$. We denote the average of a function $f$ over a ball $B$ by
\begin{align*}
(f)_B:=\fint_{B}f(x)dm_\lambda(x):=\frac{1}{m_\lambda(B)}\int_{B}f(x)dm_\lambda(x).
\end{align*}
For a $m_\lambda$-measurable set $E \subset \mathbb{R}_+^{n+1}$, we denote by $\chi_E$ its characteristic function. For two measurable Borel sets $E$ and $F$, we use the notation ${\rm dist}(E,\,F)$ to denote the distance between them.

{\bf Convention}: {\it In the sequel, unless  otherwise specific, we always assume that $\lambda>0$ and $n\geq 0$.}

\subsection{Preliminaries on space of homogeneous type}\label{s2}
For our purpose, we usually regard $\mathbb{R}_{+}^{n+1}$ ($n\geq 0$) as a space of homogeneous type, in the sense of Coifman and Weiss \cite{CWbook}, with Euclidean metric and weighted measure $dm_\lambda$.  It can be deduced from \cite{MR3829612} that for every $x=(x_1,\ldots,x_{n+1})\in\mathbb{R}_{+}^{n+1}$ and $r>0$,
\begin{align}\label{measureeee}
m_\lambda(B_{\mathbb{R}_+^{n+1}}(x,\,r))\sim r^{n+1}x_{n+1}^{2\lambda}+r^{n+1+2\lambda}.
\end{align}
Therefore, $(\mathbb{R}_{+}^{n+1},dm_\lambda)$ satisfies the following doubling inequality:  for every $x\in\mathbb{R}_{+}^{n+1}$ and $r>0$,
\begin{align}
\label{doub}2^{n+1}m_\lambda(B_{\mathbb{R}_{+}^{n+1}}(x,\,r))\leq m_\lambda(B_{\mathbb{R}_{+}^{n+1}}(x,\,2r))\leq 2^{2\lambda+n+1}m_\lambda(B_{\mathbb{R}_{+}^{n+1}}(x,\,r)).
\end{align}

In the following, we collect some properties about system of dyadic cubes on homogeneous space and adapt it to $\mathbb{R}_+^{n+1}$. A
countable family
$
    \mathcal{D}
    := \cup_{k\in\mathbb{Z}}\mathcal{D}_{k},$ where $
    \mathcal{D}_{k}
    :=\{Q^k_{\alpha}\colon \alpha\in \mathcal{A}_k\},
$
of Borel sets $Q^k_{\alpha}\subseteq \mathbb{R}_+^{n+1}$ is called \textit{a
system of dyadic cubes with parameter} $\delta\in (0,1)$  if it has the following properties:

\smallskip
(I) $    \mathbb{R}_+^{n+1}
    = \bigcup_{\alpha\in \mathcal{A}_k} Q^k_{\alpha}
    \quad\text{(disjoint union) for all}~k\in\Z$;

\smallskip
(II) $
    \text{If }\ell\geq k\text{, then either }
        Q^{\ell}_{\beta}\subseteq Q^k_{\alpha}\text{ or }
        Q^k_{\alpha}\cap Q^{\ell}_{\beta}=\emptyset$;

\smallskip
(III) $\text{For each }(k,\alpha)\text{ and each } \ell\leq k,
    \text{ there exists a unique } \beta
    \text{ such that }Q^{k}_{\alpha}\subseteq Q^\ell_{\beta}$;

\smallskip
(IV)
     \text{For each $(k,\alpha)$ there exists at most $M$
        (a fixed geometric constant)  $\beta$ such that }
     $$ Q^{k+1}_{\beta}\subseteq Q^k_{\alpha},\ {\rm and}\
        Q^k_{\alpha} =\bigcup_{{\substack{Q\in\mathcal{D}_{k+1}\\ Q\subseteq Q^k_{\alpha}}}}Q;$$

\smallskip
(V) For each $(k,\alpha)$, we have
\begin{equation}\label{eq:contain}
B_{\mathbb{R}_{+}^{n+1}}(x^k_{\alpha},\,\frac{1}{12}\delta^k)
    \subseteq Q^k_{\alpha}\subseteq B_{\mathbb{R}_{+}^{n+1}}(x^k_{\alpha},\,4\delta^k)
    =: B_{\mathbb{R}_{+}^{n+1}}(Q^k_{\alpha});
\end{equation}

\smallskip
(VI) \text{If }$\ell\geq k$\text{ and }
   $Q^{\ell}_{\beta}\subseteq Q^k_{\alpha}$\text{, then }
$$B_{\mathbb{R}_{+}^{n+1}}(Q^{\ell}_{\beta})\subseteq B_{\mathbb{R}_{+}^{n+1}}(Q^k_{\alpha}).$$
The set $Q^k_{\alpha}$ is called a \textit{dyadic cube of
generation} $k$ with center point $x^k_{\alpha}\in Q^k_{\alpha}$
and sidelength~$\delta^k$. In the sequel, for any cube $Q\subset\mathbb{R}_+^{n+1}$, we denote by $\ell(Q)$ the sidelength of $Q$.

It can be deduced from the properties of the dyadic system above that there exists a positive constant
$C_{0}$ such that for any $Q^k_{\alpha}$ and $Q^{k+1}_{\beta}$  with $Q^{k+1}_{\beta}\subset Q^k_{\alpha}$,
\begin{align}\label{Cmu0}
m_\lambda(Q^{k+1}_{\beta})\leq m_\lambda(Q^k_{\alpha})\leq C_0\delta^{-(2\lambda+n+1)}m_\lambda(Q^{k+1}_{\beta}).
\end{align}

A
system of dyadic cubes on $\mathbb{R}_{+}^{n+1}$ can be constructed in a standard way   as follows: let $\mathcal{D}^0:=\cup_{k\in\mathbb{Z}}\mathcal{D}_{k}^0$, where $\mathcal{D}_{k}^0$ is the standard dyadic partition of $\mathbb{R}_+^{n+1}$ into cubes with vertices at the sets $\{(2^{-k}m_1,\ldots,2^{-k}m_{n+1}):(m_1,\ldots,m_{n+1})\in\mathbb{Z}^{n}\times \mathbb{N}\}$.

A
finite collection $\{\mathcal{D}^\nu\colon \nu=1,2,\ldots ,\kappa\}$ of the dyadic
families  is called a collection of
adjacent systems of dyadic cubes over $\mathbb{R}_{+}^{n+1}$ with parameters $\delta\in
(0,1) $ and $1\leq C_{\rm adj}<\infty$ if it satisfies the
following properties:

$\bullet$ Each $\mathcal{D}^\nu$ is a
system of dyadic cubes with parameter $\delta\in (0,1)$;

$\bullet$ For each ball
$B_{\mathbb{R}_+^{n+1}}(x,\,r)\subseteq \mathbb{R}_+^{n+1}$ with $\delta^{k+3}<r\leq\delta^{k+2},
k\in\Z$, there exist $\nu \in \{1, 2, \ldots, \kappa\}$ and
$Q\in\mathcal{D}_{k}^\nu$ of generation $k$ and with center point
$^\nu x^k_{\alpha}$ such that $|x-{}^\nu x_{\alpha}^k| <
2\delta^{k}$ and
\begin{equation}\label{eq:ball;included}
    B_{\mathbb{R}_+^{n+1}}(x,\,r)\subseteq Q\subseteq B_{\mathbb{R}_+^{n+1}}(x,\,C_{\rm adj}r).
\end{equation}

We adapt the construction in \cite{HK} to our setting, which can be stated as follows.
\begin{lemma}\label{thm:existence2}
On $\mathbb{R}_+^{n+1}$ with Euclidean metric and weighted measure $dm_\lambda$, there exists a collection $\{\mathcal{D}^\nu\colon
    \nu = 1,2,\ldots ,\kappa\}$ of adjacent systems of dyadic cubes with
    parameters $\delta\in (0, \frac{1}{96}) $ and $C_{\rm adj} := 8\delta^{-3}$ such that the center points
    $^\nu x^k_{\alpha}$ of the cubes $Q\in\mathcal{D}^\nu_{k}$ satisfy, for each
    $\nu \in\{1,2,\ldots,\kappa\}$,
    \begin{equation*}
        |^\nu x_{\alpha}^k- {}^\nu x_{\beta}^k|
        \geq \frac{1}{4}\delta^k\quad(\alpha\neq\beta),\qquad
        \min_{\alpha}|x-{}^\nu x^k_{\alpha}|
        < 2 \delta^k\quad \text{for all}~x\in \mathbb{R}_+^{n+1}.
    \end{equation*}
    Furthermore, these adjacent systems can be constructed in such a
    way that each $\mathcal{D}^\nu$ satisfies the distinguished
    center point property: given a fixed point $x_{0}\in \mathbb{R}_{+}^{n+1}$, for every $k\in \Z$, there exists $\alpha\in\mathcal{A}_k$ such that
    $x_{0}
        = x^k_{\alpha},\text{ the center point of }
        Q^k_{\alpha}\in\mathcal{D}_{k}^\nu.$
\end{lemma}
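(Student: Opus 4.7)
The plan is to adapt the Hytönen--Kairema construction \cite{HK} directly to the ambient space $(\mathbb{R}_+^{n+1}, |\cdot|, dm_\lambda)$. The key point is that HK build adjacent dyadic systems on any space of homogeneous type, so all that is really needed is to verify the hypotheses and then track how the specific parameters $\delta \in (0, 1/96)$ and $C_{\rm adj} = 8\delta^{-3}$ propagate through their argument. The geometric doubling hypothesis for HK is purely metric, so it reduces to geometric doubling of $\mathbb{R}_+^{n+1}$ with the Euclidean metric, which is inherited from $\mathbb{R}^{n+1}$. The doubling of $dm_\lambda$, recorded in \eqref{doub}, is not used in the construction itself; it will only matter when the cubes are deployed in subsequent estimates.

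Next, I would carry out the two-scale construction. For each integer $k$ and each index $\nu$, I would select a maximal $\delta^k$-separated collection of reference points $\{{}^\nu x^k_\alpha\}_\alpha \subset \mathbb{R}_+^{n+1}$, which automatically gives both the separation $|{}^\nu x^k_\alpha - {}^\nu x^k_\beta| \geq \delta^k \geq \tfrac14 \delta^k$ and the covering bound $\min_\alpha |x - {}^\nu x^k_\alpha| < \delta^k < 2\delta^k$ stated in the lemma. Nesting of parent and child reference points across scales $k$, together with a careful assignment of each $x \in \mathbb{R}_+^{n+1}$ to a unique index $\alpha$ at each scale (Voronoi-type, with a deterministic tie-breaking rule coordinated between parents and children), produces sets $Q^k_\alpha$ satisfying (I)--(IV) and the metric sandwich (V) with radii $\frac{1}{12}\delta^k$ and $4\delta^k$; these constants are exactly those that arise in HK once $\delta$ is taken small enough, specifically $\delta < 1/96$.

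For the adjacent collection, I would fix a finite family of discrete translates of the basic grid, indexed by $\nu \in \{1,\dots,\kappa\}$, where $\kappa$ is controlled only by the geometric doubling constant of $\mathbb{R}_+^{n+1}$ with the Euclidean metric; for every ball with $\delta^{k+3} < r \leq \delta^{k+2}$ at least one translate produces a cube $Q \in \mathcal{D}^\nu_k$ fitting between $B_{\mathbb{R}_+^{n+1}}(x,r)$ and $B_{\mathbb{R}_+^{n+1}}(x,8\delta^{-3}r)$, giving \eqref{eq:ball;included}. The distinguished center point property is arranged by a separate, parallel construction: at every scale $k$ I would insist that $x_0$ lies in the maximal $\delta^k$-separated reference set for $\mathcal{D}^\nu$, which is compatible with maximality and does not perturb any of the other defining properties.

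The main obstacle, and the only point where the Bessel setting differs from the standard Euclidean case, is that $\mathbb{R}_+^{n+1}$ is a proper open subset with the hyperplane $\{x_{n+1}=0\}$ as its boundary, and $dm_\lambda$ degenerates there. One might worry that maximal separated sets or Voronoi cells near this boundary produce pathological cubes. However, because the construction depends only on the metric structure and on geometric doubling, and because the Euclidean metric on $\mathbb{R}_+^{n+1}$ is simply the restriction of the Euclidean metric on $\mathbb{R}^{n+1}$, the HK argument goes through verbatim; the cubes remain Borel subsets of the open half-space, and all constants $\delta$, $C_{\rm adj}$, $\kappa$ can be taken independent of $\lambda$. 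The degeneracy of $dm_\lambda$ enters only through \eqref{Cmu0}, which is a consequence of \eqref{doub}, and which is stated separately and not needed at this stage.
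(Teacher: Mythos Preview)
Your proposal is correct and follows essentially the same approach as the paper, which simply states the lemma as an adaptation of the Hyt\"onen--Kairema construction \cite{HK} without further proof. Your write-up supplies the details the paper omits---verifying geometric doubling, tracking the parameters $\delta$ and $C_{\rm adj}$, and noting that the boundary $\{x_{n+1}=0\}$ and the degeneracy of $dm_\lambda$ play no role in the purely metric construction---but the underlying strategy is identical.
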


\subsection{Weighted Alpert wavelets}
In this subsection, we recall the celebrated construction of weighted Alpert wavelets for $L^2(\mathbb{R}^n,\mu)$ from \cite[Theorem 1]{MR4179877} (see also \cite[Formula (2.9)]{MR4554083} for the $L^\infty$ control of projections) with $\mu$ being a doubling measure. These wavelets have three important properties of orthogonality, telescoping and moment vanishing, so in this paper, we will regard weighted Alpert wavelets as an appropriate substitute of Fourier expansion (see the proof of Lemmas \ref{jqss} and \ref{jq} for this argument).

Let $\mu$ be a locally finite positive doubling Borel measure on $\mathbb{R}^n$ and let $\mathcal{K} \in \mathbb{N}$. For any $Q$ with side parallel to the coordinate axes, denote by $\mathfrak{C}(Q)$ the sets of $2^n$ subcubes with disjoint interior points such that the side lengths of these subcubes are half the side length of $Q$. Moreover, we denote by $L^2_{Q,\mathcal{K}}(\mu)$ the finite dimensional subspace of $L^2(\mathbb{R}^n,\mu)$ that consists of linear combinations of the indicators of the children $\mathfrak{C}(Q)$ of $Q$ multiplied by polynomials of degree less than $\mathcal{K}$, and such that the linear combinations have vanishing $\mu$-moments on the cube $Q$ up to order $\mathcal{K} - 1$:
\begin{equation}
L^2_{Q,\mathcal{K}}(\mu) :=
\left\{
f = \sum_{Q' \in \mathfrak{C}(Q)} 1_{Q'} p_{Q',\mathcal{K}}(x) : \int_Q f(x) x^\beta d\mu(x) = 0, \text{ for } 0 \leq | \beta | < \mathcal{K}
\right\},
\end{equation}
where $p_{Q,\mathcal{K}}(x) = \sum_{\beta \in \mathbb{Z}_+^n : | \beta | \leq \mathcal{K} - 1} a_{Q',\beta} x^\beta$ is a polynomial in $\mathbb{R}^n$ of degree less than $\mathcal{K}$. Here $x^\beta = x_1^{\beta_1} x_2^{\beta_2} \cdots x_n^{\beta_n}$. Let $d_{Q,\mathcal{K}} := \dim L^2_{Q,\mathcal{K}}(\mu)$ be the dimension of the finite dimensional linear space $L^2_{Q,\mathcal{K}}(\mu)$. Let $\mathfrak{D}$ denote a dyadic grid on $\mathbb{R}^n$ and for $Q \in \mathfrak{D}$, let $\Delta^\mu_{Q,\mathcal{K}}$ denote orthogonal projection onto the finite dimensional subspace $L^2_{Q,\mathcal{K}}(\mu)$, and let $E^\mu_{Q,\mathcal{K}}$ denote orthogonal projection onto the finite dimensional subspace
$
P^n_{Q,\mathcal{K}}(\mu):= {\rm Span}\{1_{Q}x^\beta : 0 \leq | \beta | < \mathcal{K}\}.
$
\begin{lemma}\label{APWa}\cite[Theorem 1]{MR4179877}
Let $\mu$ be a doubling measure on $\mathbb{R}^n$. Let $\mathcal{K} \in \mathbb{N}$  and $\mathfrak{D}$ be a dyadic grid  in $\mathbb{R}^n$. Then we have the following statements.
\begin{enumerate}
  \item $\{\Delta^\mu_{Q,\mathcal{K}} \}_{Q \in D}$ is a complete set of orthogonal projections in $L^2(\mathbb{R}^n,\mu)$ and
  \begin{align*}
  &f = \sum_{Q \in \mathfrak{D}} \Delta^\mu_{Q,\mathcal{K}} f, \quad f \in L^2(\mathbb{R}^n,\mu),\\
  &\langle \Delta^\mu_{P,\mathcal{K}}f,\,\Delta^\mu_{Q,\mathcal{K}}f \rangle_{L^2(\mathbb{R}^n,\mu)}=0,\ {\rm for}\ P\neq Q,
  \end{align*}
  where convergence in the sum holds both in $L^2(\mathbb{R}^n,\mu)$ and pointwise $\mu$-almost everywhere.
  \item The following telescoping identities hold:
  \begin{equation}
  1_Q \sum_{I:Q\varsubsetneqq I\subset P}\Delta^\mu_{I,\mathcal{K}}=\mathbb{E}^\mu_{Q,\mathcal{K}}-\chi_Q \mathbb{E}^\mu_{P,\mathcal{K}} \quad \text{for }\ P, Q \in \mathfrak{D} \ \text{ with }\ Q \varsubsetneqq P.
  \end{equation}
  \item The following moment vanishing conditions hold:
  \begin{equation}
  \int_{\mathbb{R}^n} \Delta^\mu_{Q,\mathcal{K}} f(x) x^\beta d\mu(x) = 0, \quad \text{for }\ Q \in \mathfrak{D},\, \beta \in \mathbb{Z}^n,\, 0 \leq | \beta | < \mathcal{K}.
  \end{equation}
\end{enumerate}
\end{lemma}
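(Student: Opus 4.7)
\emph{Proof proposal.} The plan is to mimic the classical Haar–martingale construction, replacing ``piecewise constants'' by ``piecewise polynomials of degree less than $\mathcal{K}$'' adapted to the measure $\mu$. First I would define $\mathbb{E}^\mu_{Q,\mathcal{K}}$ as the $L^2(\mathbb{R}^n,\mu)$-orthogonal projection onto $P^n_{Q,\mathcal{K}}(\mu)$. Since $\mu$ is doubling and therefore gives strictly positive mass to every nonempty open cube, the monomials $\{1_Q x^{\beta}:|\beta|<\mathcal{K}\}$ are linearly independent in $L^2(\mathbb{R}^n,\mu)$, so Gram–Schmidt produces an explicit $L^2(\mu)$-orthonormal basis and each $\mathbb{E}^\mu_{Q,\mathcal{K}}$ is well defined. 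I then define the wavelet projection by the martingale-difference formula
\begin{equation*}
\Delta^\mu_{Q,\mathcal{K}} := \sum_{Q'\in\mathfrak{C}(Q)} \mathbb{E}^\mu_{Q',\mathcal{K}} - \mathbb{E}^\mu_{Q,\mathcal{K}}.
\end{equation*}

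The three properties then separate nicely. For the moment vanishing in (3), observe that both $\mathbb{E}^\mu_{Q,\mathcal{K}} f$ and $\sum_{Q'\in\mathfrak{C}(Q)} \mathbb{E}^\mu_{Q',\mathcal{K}} f$ reproduce the $\mu$-integrals of $f$ against any polynomial of degree less than $\mathcal{K}$ on $Q$, because every such polynomial lies in both $P^n_{Q,\mathcal{K}}(\mu)$ and $\bigoplus_{Q'\in\mathfrak{C}(Q)} P^n_{Q',\mathcal{K}}(\mu)$; hence $\Delta^\mu_{Q,\mathcal{K}} f$ has vanishing such moments on $Q$. This also shows $\Delta^\mu_{Q,\mathcal{K}} f\in L^2_{Q,\mathcal{K}}(\mu)$, since the piecewise polynomial structure on the children is built into the definition. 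The telescoping identity in (2) is then a direct computation: summing the definition of $\Delta^\mu_{I,\mathcal{K}}$ along the nested chain of dyadic ancestors from $Q$ up to (but not including) $P$, all intermediate terms cancel, leaving $\mathbb{E}^\mu_{Q,\mathcal{K}}-\chi_{Q}\mathbb{E}^\mu_{P,\mathcal{K}}$.

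Orthogonality in (1) is verified by casework on the relative position of $P$ and $Q$ in $\mathfrak{D}$: if $P\cap Q=\emptyset$ the two projections have disjoint supports, while if $Q\varsubsetneqq P$ the restriction of $\Delta^\mu_{P,\mathcal{K}} f$ to the unique child of $P$ containing $Q$ is a polynomial of degree less than $\mathcal{K}$, which pairs to zero against $\Delta^\mu_{Q,\mathcal{K}} f$ by the moment vanishing just established.

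The main obstacle is completeness, namely $f=\sum_{Q\in\mathfrak{D}}\Delta^\mu_{Q,\mathcal{K}} f$ with convergence in $L^2(\mathbb{R}^n,\mu)$ and $\mu$-pointwise. By telescoping, the symmetric partial sum over generations $-N\leq k\leq K$ collapses to $\sum_{Q\in\mathfrak{D}_{K+1}}\mathbb{E}^\mu_{Q,\mathcal{K}} f - \sum_{Q\in\mathfrak{D}_{-N}}\mathbb{E}^\mu_{Q,\mathcal{K}} f$. The coarse-scale term tends to zero by Cauchy–Schwarz, since $\mu(Q)\to\infty$ as generation decreases. The substantive step is the fine-scale convergence $\sum_{Q\in\mathfrak{D}_{K+1}}\mathbb{E}^\mu_{Q,\mathcal{K}} f \to f$: this is a polynomial analogue of the Lebesgue differentiation theorem along the dyadic filtration, where doubling of $\mu$ provides uniform $L^\infty$-control of the polynomial projections on each cube (normalised by $\mu(Q)^{-1/2}$), after which a standard density argument through continuous compactly supported functions closes the proof.
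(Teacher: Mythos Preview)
The paper does not give its own proof of this lemma: it is quoted verbatim as \cite[Theorem~1]{MR4179877} (Rahm--Sawyer--Wick), and the only commentary is the remark that the construction carries over with trivial changes to the abstract dyadic systems of Section~\ref{s2}. Your sketch is the natural martingale-difference construction and is, in outline, what the cited reference does.

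One place where your write-up is thinner than it should be is the coarse-scale limit in part~(1). The claim that $\sum_{Q\in\mathfrak{D}_{-N}}\mathbb{E}^\mu_{Q,\mathcal{K}} f\to 0$ ``by Cauchy--Schwarz, since $\mu(Q)\to\infty$'' is fine for $\mathcal{K}=1$ (project onto constants, use $L^1\cap L^2$ density), but for $\mathcal{K}\ge 2$ the basis monomials $1_Q x^\beta$ grow with $Q$ and the argument is not just size of $\mu(Q)$. What you actually need is that for $f$ compactly supported and any polynomial $p$ of degree $<\mathcal{K}$ with $\|p\|_{L^2(Q,\mu)}=1$, the restriction $\|p\|_{L^2(\mathrm{supp}\,f,\mu)}\to 0$ as $Q$ exhausts $\mathbb{R}^n$; this is a doubling-measure polynomial inequality (a local-to-global comparison for polynomials of bounded degree), after which density and the uniform bound $\|\mathbb{E}^\mu_{Q,\mathcal{K}}\|\le 1$ finish the job. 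This is not a structural gap in your plan, but the sentence as written hides the actual mechanism.
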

From the construction of weighted Alpert wavelet in \cite[Theorem 1]{MR4179877} we see that with some trivial modification of their proof, the conclusion in Lemma \ref{APWa} is still valid if dyadic grid $\mathfrak{D}$ is replaced by any system of dyadic cubes $\mathcal{D}$ defined in Section \ref{s2}. Moreover, by letting $\{h_{Q,\mathcal{K}}^\epsilon\}_{\epsilon\in\Gamma_{n,k}}$ be an orthonormal basis of $L^2_{Q,\mathcal{K}}(\mathbb{R}^n,\mu)$, where $\Gamma_{n,\mathcal{K}}$ is a finite index set only depending on $n$ and $\mathcal{K}$ (See also explanation in \cite[Notation 10]{MR4554083}), recalling the $L^\infty$ control of projections in \cite[Formula (2.9)]{MR4554083} and then applying Lemma \ref{APWa} to the Bessel setting, we have the following version of weighted Alpert wavelet.
\begin{lemma}\label{AW}
Fix $\mathcal{K}\in\mathbb{N}$ and a system of dyadic cubes $\mathcal{D}$. Then there exist a finite index set $\Gamma_{n,\mathcal{K}}$ and a family of functions $\{h_{Q,\mathcal{K}}^\epsilon\}_{Q\in\mathcal{D},\epsilon\in\Gamma_{n,\mathcal{K}}}$ such that
  $$\|h_{Q,\mathcal{K}}^\epsilon\|_{L^2(\mathbb{R}_+^{n+1},dm_\lambda)}=1,$$
  $$\supp h_{Q,\mathcal{K}}^\epsilon\subset Q, $$
  $$\|h_{Q,\mathcal{K}}^\epsilon\|_\infty\lesssim m_\lambda(Q)^{-1/2},$$
  $$\int_Q h_{Q,\mathcal{K}}^\epsilon(x)x^\alpha d\mu(x)=0,\ {\rm for}\ {\rm all}\ 0\leq |\alpha|<\mathcal{K},$$
and for any $f\in L^2(\mathbb{R}_+^{n+1},dm_\lambda)$, we have
  $$f=\sum_{Q\in\mathcal{D},\epsilon\in\Gamma_{n,k}}\langle f,\,h_{Q,\mathcal{K}}^\epsilon\rangle h_{Q,\mathcal{K}}^\epsilon,$$
  where the sum converges in $L^2(\mathbb{R}_+^{n+1},dm_\lambda)$ and pointwise $m_\lambda$-almost everywhere.
\end{lemma}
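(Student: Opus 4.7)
The plan is to invoke Lemma \ref{APWa} with the Bessel measure $dm_\lambda$ and the system of dyadic cubes $\mathcal{D}$ from Lemma \ref{thm:existence2}, and then produce the functions $h_{Q,\mathcal{K}}^\epsilon$ by diagonalizing each finite-dimensional projection. First I would note that the doubling estimate \eqref{doub} makes $dm_\lambda$ a doubling Borel measure on $\mathbb{R}_+^{n+1}$, and that the proof of \cite[Theorem 1]{MR4179877} uses only the nesting/disjoint-covering properties (I)--(VI) of a system of dyadic cubes together with doubling of the measure. In particular, the argument extends without modification from the standard dyadic grid on $\mathbb{R}^n$ to the system $\mathcal{D}$ on $\mathbb{R}_+^{n+1}$, producing the family of orthogonal projections $\Delta^{m_\lambda}_{Q,\mathcal{K}}$ onto the finite-dimensional subspaces $L^2_{Q,\mathcal{K}}(m_\lambda)$ together with the three conclusions of Lemma \ref{APWa}.

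Next, for every $Q\in\mathcal{D}$, the dimension $d_{Q,\mathcal{K}}$ of $L^2_{Q,\mathcal{K}}(m_\lambda)$ is bounded by a constant $d(n,\mathcal{K})$ depending only on $n$ and $\mathcal{K}$, since an element of $L^2_{Q,\mathcal{K}}(m_\lambda)$ is specified by prescribing, on each of the $2^{n+1}$ children of $Q$, a polynomial in $n+1$ variables of degree less than $\mathcal{K}$. I would therefore set $\Gamma_{n,\mathcal{K}}:=\{1,\dots,d(n,\mathcal{K})\}$ and, for each $Q$, choose an $L^2(dm_\lambda)$-orthonormal basis $\{h_{Q,\mathcal{K}}^\epsilon\}_{\epsilon\in\Gamma_{n,\mathcal{K}}}$ of $L^2_{Q,\mathcal{K}}(m_\lambda)$ (padded with zero functions if $d_{Q,\mathcal{K}}<d(n,\mathcal{K})$). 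The $L^2$-normalization, the support property $\supp h_{Q,\mathcal{K}}^\epsilon\subset Q$, and the moment vanishing up to order $\mathcal{K}-1$ are then immediate from the definition of $L^2_{Q,\mathcal{K}}(m_\lambda)$ and part (3) of Lemma \ref{APWa}. The expansion $f=\sum_{Q,\epsilon}\langle f,h_{Q,\mathcal{K}}^\epsilon\rangle h_{Q,\mathcal{K}}^\epsilon$, with convergence in $L^2(\mathbb{R}_+^{n+1},dm_\lambda)$ and pointwise $m_\lambda$-a.e., follows by combining $f=\sum_Q \Delta^{m_\lambda}_{Q,\mathcal{K}}f$ from Lemma \ref{APWa}(1) with the orthonormal-basis expansion $\Delta^{m_\lambda}_{Q,\mathcal{K}}f=\sum_\epsilon \langle f,h_{Q,\mathcal{K}}^\epsilon\rangle h_{Q,\mathcal{K}}^\epsilon$ inside each subspace.

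The only non-bookkeeping step is the $L^\infty$ control $\|h_{Q,\mathcal{K}}^\epsilon\|_\infty\lesssim m_\lambda(Q)^{-1/2}$. For this I would follow \cite[Formula (2.9)]{MR4554083}: on each child $Q'\in\mathfrak{C}(Q)$ the restriction $h_{Q,\mathcal{K}}^\epsilon|_{Q'}$ lies in the finite-dimensional space of polynomials of degree less than $\mathcal{K}$, so by equivalence of norms on that space (with constants depending only on $n$ and $\mathcal{K}$ after affine rescaling to $Q'$) one obtains $\|h_{Q,\mathcal{K}}^\epsilon\|_{L^\infty(Q')}\lesssim m_\lambda(Q')^{-1/2}\|h_{Q,\mathcal{K}}^\epsilon\|_{L^2(Q',dm_\lambda)}\leq m_\lambda(Q')^{-1/2}$, and the child-parent comparison \eqref{Cmu0} upgrades $m_\lambda(Q')^{-1/2}$ to $m_\lambda(Q)^{-1/2}$ up to a constant. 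No substantial obstacle is expected: the construction is essentially a transcription of \cite[Theorem 1]{MR4179877} and \cite[Formula (2.9)]{MR4554083} to the doubling space $(\mathbb{R}_+^{n+1},|\cdot|,dm_\lambda)$, and the only mild checks are that those arguments use nothing about the ambient geometry beyond doubling and the nesting properties (I)--(VI), and that the uniform dimension bound $d(n,\mathcal{K})$ is independent of the individual cube $Q$.
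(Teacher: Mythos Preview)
Your proposal is correct and follows essentially the same route as the paper: invoke Lemma \ref{APWa} (noting its proof carries over from the standard dyadic grid to any system $\mathcal{D}$ with a doubling measure), choose an orthonormal basis of each finite-dimensional subspace $L^2_{Q,\mathcal{K}}(m_\lambda)$, and read off the $L^\infty$ bound from \cite[Formula (2.9)]{MR4554083} combined with the child--parent measure comparison \eqref{Cmu0}. The only minor caveat is that for a general system $\mathcal{D}$ a cube may have up to $M$ children rather than exactly $2^{n+1}$ (property (IV)), but this only changes the dimension bound $d(n,\mathcal{K})$ by a harmless constant and does not affect your argument.
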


\subsection{Nearly weakly orthogonal sequences}\label{nwooo}
The notion of \emph{nearly weakly orthogonal  (NWO)} sequences of functions proposed by Rochberg and Semmes \cite{RS} plays a crucial role in establishing Schatten--Lorentz membership for compact operator on Hilbert space. This concept is formulated in the context of Euclidean space equipped with Lebesgue measure, but it can be adapted parallelly to many other setting beyond Euclidean framework. Fur our purpose, it is not necessary to recall the original definitions of this concept. Instead, it suffices to know that a sequence of functions $\{e_Q\}_{Q\in\mathcal{D}}$ with $\supp e_Q\subset Q$ and $\|e_Q\|_{L^p(\mathbb{R}_+^{n+1},dm_\lambda)}\leq m_\lambda(Q)^{1/p-1/2}$ for some $p>2$ is NWO (see \cite[page 239--240]{RS}), where $\mathcal{D}$ is a given system of dyadic cubes. Furthermore, their arguments (see \cite[(1.10) and Corollary 2.7]{RS}) adapted to the Bessel setting say that if $0<p<\infty$, $0<q\leq\infty$, then for any
bounded compact operator $ A $ on $ L ^2 (\mathbb R_+^{n+1},dm_\lambda)$ given by
$$A=\sum_{Q\in\mathcal{D}}\lambda_Q\langle \cdot,\,e_Q\rangle f_Q,$$
where $e_Q$ and $f_Q$ are NWO sequences and $\lambda_Q$ is a sequence of scalars, we have
\begin{align}\label{nwose}
\|A\|_{S_\lambda^{p,q}}\lesssim \|\{\lambda_Q\}_Q\|_{\ell^{p,q}}.
\end{align}

\subsection{Heat kernel estimate associated with Bessel operator}
For any $f$, $g\in L^1(\mathbb{R}_+,dm_\lambda)$, we define their Hankel convolution by
$$f\sharp_\lambda g(x):=\int_0^\infty f(y)\tau_x^{[\lambda]}g(y)dm_\lambda(y),\ {\rm for}\ {\rm all}\ x\in \mathbb{R}_+,$$
where $\tau_x^{[\lambda]}$ denotes the Hankel translation of $g$, which can be expressed as
$$\tau_x^{[\lambda]}g(y):=\frac{\Gamma(\lambda+1/2)}{\Gamma(\lambda)\sqrt{\pi}}\int_0^\pi g\Big(\sqrt{x^2+y^2-2xy\cos\theta}\Big)(\sin\theta)^{2\lambda-1}d\theta.$$
Let $\Gamma(x)$ denote the Gamma function. For any function $\phi$, we denote $\phi_t(y):=t^{-2\lambda-1}\phi(y/t)$. Then recall from \cite{MR2496404} that \begin{align}\label{heatex}
e^{-t\Delta_\lambda}f=f\sharp_\lambda W_{\sqrt{2t}}^{[\lambda]}
\end{align}
 for all $t\in\mathbb{R}_+$, where we denote
$$W^{[\lambda]}(x)=2^{(1-2\lambda)/2}\exp(-x^2/2)/\Gamma(\lambda+1/2).$$

The main result of this subsection is to establish the following heat kernel estimate associated with Bessel operator.
\begin{lemma}\label{heat kernel}
For all multi-index $\alpha\in\mathbb{N}^{n+1}$, there exist constants $C,C_\alpha,c>0$  such that
\begin{align*}
&(1) \ |K_{e^{-t^2 \Delta_\lambda}}(x,\,y)|\leq \frac{C}{m_\lambda(B_{\mathbb{R}_+^{n+1}}(x,\,t))}\exp\left(-c\frac{|x-y|^2}{t^2}\right),\\
&(2) \ |\partial_tK_{ e^{-t^2 \Delta_\lambda} }(x,\,y)|\leq \frac{C}{tm_\lambda(B_{\mathbb{R}_+^{n+1}}(x,\,t))}\exp\left(-c\frac{|x-y|^2}{t^2}\right),\\
&(3) \ |\partial_{x}^\alpha K_{ e^{-t^2 \Delta_\lambda} }(x,\,y)|\leq \frac{C_\alpha}{t^{|\alpha|}m_\lambda(B_{\mathbb{R}_+^{n+1}}(x,\,t))}\exp\left(-c\frac{|x-y|^2}{t^2}\right),\\
&(4) \ \int_{0}^\infty K_{e^{-t^2 \Delta_\lambda}}(x,\,z)dm_\lambda(z)=1
\end{align*}
for all $x,y\in\mathbb{R}_+^{n+1}$ and $t\in\mathbb{R}_+$.
\end{lemma}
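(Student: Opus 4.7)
The plan is to reduce all four estimates to the one--dimensional Bessel heat kernel on $(\mathbb R_+,dm_\lambda^{(1)})$, combined with the classical Gaussian bounds for the Euclidean heat kernel, by exploiting the tensor--product structure of the semigroup.

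First, for $n\geq 1$, I would use the decomposition $\Delta_\lambda=(-\Delta_{x_1,\ldots,x_n})+\Delta_\lambda^{(1)}$ implicit in \eqref{Dlambda}: the two summands act on disjoint sets of variables and thus commute, so the semigroup factorises and the kernel splits as
\begin{equation*}
K_{e^{-t^2\Delta_\lambda}}(x,y)=\frac{1}{(4\pi t^2)^{n/2}}\exp\Big(-\frac{|x'-y'|^2}{4t^2}\Big)\,K^{[\lambda]}_{t^2}(x_{n+1},y_{n+1}),
\end{equation*}
where $x=(x',x_{n+1})$, $y=(y',y_{n+1})$, and $K^{[\lambda]}_{t^2}$ denotes the kernel of $e^{-t^2\Delta_\lambda^{(1)}}$. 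Combining \eqref{heatex} with the explicit form of the Hankel translation produces
\begin{equation*}
K^{[\lambda]}_{t^2}(u,v)=\frac{C_\lambda}{t^{2\lambda+1}}\int_0^\pi\exp\Big(-\frac{u^2+v^2-2uv\cos\theta}{4t^2}\Big)(\sin\theta)^{2\lambda-1}\,d\theta,
\end{equation*}
and every one of (1)--(4) becomes a statement about this single integral (together with the Euclidean factor).

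For (1), I would write $u^2+v^2-2uv\cos\theta=(u-v)^2+2uv(1-\cos\theta)$ to peel off a Gaussian factor $\exp(-c(u-v)^2/t^2)$, then estimate the remaining $\theta$-integral using $1-\cos\theta\gtrsim\theta^2$ on $[0,\pi]$. A substitution $\theta=st/\sqrt{uv}$ followed by a case split $uv\lesssim t^2$ versus $uv\gtrsim t^2$ yields
\begin{equation*}
K^{[\lambda]}_{t^2}(u,v)\lesssim t^{-2\lambda-1}\min\{1,(t/\sqrt{uv})^{2\lambda}\}\exp(-c(u-v)^2/t^2),
\end{equation*}
which, together with \eqref{measureeee} and a harmless trade of some Gaussian decay in $(u-v)/t$ for the comparison between $u,v$ and $x_{n+1}$, gives the claimed measure--normalised bound on the Bessel factor; multiplying by the Euclidean factor produces (1). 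For (2) and (3) I would differentiate under the integral sign: each $t$-derivative brings down either a $t^{-1}$ or a factor $(u^2+v^2-2uv\cos\theta)/t^3$ that is absorbed into the Gaussian by shrinking $c$; spatial derivatives in $x_{n+1}$ bring polynomial factors of $(x_{n+1}-y_{n+1}\cos\theta)/t^2$ treated identically, while derivatives in the Euclidean variables $x_1,\ldots,x_n$ are handled by the classical Gaussian derivative bound. A book--keeping of the $t^{-1}$ factors matches the $t^{-1}$ in (2) and the $t^{-|\alpha|}$ in (3). For (4), I would argue either directly from the semigroup structure using $\Delta_\lambda\cdot 1\equiv 0$, or by integrating: the Euclidean Gaussian has total mass $1$, and $\int_0^\infty W^{[\lambda]}_{t\sqrt 2}(z)z^{2\lambda}dz=1$ by the $\Gamma$-normalisation of $W^{[\lambda]}$, combined with the fact that Hankel translation preserves the $dm_\lambda^{(1)}$ integral.

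The main obstacle will be the measure--normalised passage in (1): since $m_\lambda(B_{\mathbb R_+^{n+1}}(x,t))$ depends simultaneously on $t$ and on the boundary coordinate $x_{n+1}$, converting the raw bound $t^{-2\lambda-1}\min\{1,(t/\sqrt{uv})^{2\lambda}\}$ into the form $1/m_\lambda(B_{\mathbb R_+^{n+1}}(x,t))$ requires a case analysis near $x_{n+1}\to 0$ together with a comparison of $x_{n+1}$ and $y_{n+1}$, and it is precisely there that a small amount of Gaussian decay in $|x-y|/t$ must be traded against the weighted volume growth recorded in \eqref{measureeee}.
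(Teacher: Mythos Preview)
Your proposal is correct and follows essentially the same approach as the paper: reduce to the one--dimensional Bessel factor via the tensor--product splitting, use the explicit Hankel--translation integral, peel off the Gaussian factor $\exp(-c(u-v)^2/t^2)$, and then differentiate under the integral sign for (2) and (3). The paper organises the key step in (1) slightly differently---it splits according to whether $t\ge x$ or $|x-y|\ge x/2$ (your ``trade Gaussian decay for the weight'' case) versus the complementary regime $t<x$, $|x-y|<x/2$ (where $u\sim v$ so $uv\sim u^2$, matching your $uv\gtrsim t^2$ case), and within the latter further splits the $\theta$-integral at $\pi/2$ to handle the $(\sin\theta)^{2\lambda-1}$ factor cleanly---but the substance is the same and your identified obstacle is exactly the point that requires care.
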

\begin{proof}
We first show Lemma \ref{heat kernel} in the case of $n=1$. In this case, it follows from \eqref{heatex} that
\begin{align}
K_{e^{-t^2 \Delta_\lambda^{(1)}}}(x,\,y)
&=\frac{\Gamma(\lambda+1/2)}{\Gamma(\lambda)\sqrt{\pi}}\int_0^\pi W_{\sqrt{2}t}^{[\lambda]}\Big(\sqrt{x^2+y^2-2xy\cos\theta}\Big)(\sin\theta)^{2\lambda-1}d\theta\nonumber\\
&=2^{(1-2\lambda)/2}\frac{\Gamma(\lambda+1/2)}{\Gamma(\lambda)\sqrt{\pi}}\int_0^\pi (\sqrt{2}t)^{-2\lambda-1}\exp\left(-\frac{x^2+y^2-2xy\cos\theta}{4t^2}\right)(\sin\theta)^{2\lambda-1}d\theta\label{middd}\\
&\sim t^{-2\lambda-1}\int_0^\pi \exp\left(-\frac{x^2+y^2-2xy\cos\theta}{4t^2}\right)(\sin\theta)^{2\lambda-1}d\theta.\label{midd}
\end{align}
To continue, we divide the proof into two cases.

{\bf Case (i).} If $t\geq x$ or $|x-y|\geq x/2$, then it follows from \eqref{midd} and the doubling condition \eqref{doub} that
\begin{align*}
K_{e^{-t^2 \Delta_\lambda^{(1)}}}(x,\,y)&\lesssim t^{-2\lambda-1} \exp\left(-\frac{|x-y|^2}{4t^2}\right)
\lesssim \frac{1}{m_\lambda(B_{\mathbb{R}_+}(x,\,t))}\exp\left(-\frac{|x-y|^2}{8t^2}\right).
\end{align*}

{\bf Case (ii).} If $t< x$ and $|x-y|< x/2$, then observe that $x\sim y$, $\sin\theta\sim \theta$ for all $\theta\in (0,\pi/2)$ and $1-\cos\theta\geq 2(\theta/\pi)^2$. Now we decompose the right-hand side of \eqref{midd}   as
\begin{align*}
&Ct^{-2\lambda-1}\int_0^{\pi/2} \exp\left(-\frac{|x-y|^2+2xy(1-\cos\theta)}{4t^2}\right)(\sin\theta)^{2\lambda-1}d\theta\\
&\hspace{1.0cm}+Ct^{-2\lambda-1}\int_{\pi/2}^\pi \exp\left(-\frac{x^2+y^2-2xy\cos\theta}{4t^2}\right)(\sin\theta)^{2\lambda-1}d\theta=:{\rm A}+{\rm B}.
\end{align*}
For the term ${\rm A}$, by a change of variable, we have
\begin{align*}
{\rm A}&\lesssim t^{-2\lambda-1}\int_0^{\pi/2} \exp\left(-\frac{|x-y|^2+4xy\theta^2/\pi^2}{4t^2}\right)\theta^{2\lambda-1}d\theta\\
&\lesssim t^{-2\lambda-1}\exp\left(-\frac{|x-y|^2}{4t^2}\right)\int_0^{\infty} \exp\left(-\frac{4xy\theta^2/\pi^2}{4t^2}\right)\theta^{2\lambda-1}d\theta\\
&\lesssim (tx^{2\lambda}y^{2\lambda})^{-1}\exp\left(-\frac{|x-y|^2}{4t^2}\right)\\
&\lesssim \frac{1}{m_\lambda(B_{\mathbb{R}_+^{n+1}}(x,\,t))}\exp\left(-\frac{|x-y|^2}{4t^2}\right).
\end{align*}
For the term ${\rm B}$, we note that $\cos\theta<0$ for all $\theta\in (\pi/2,\pi)$. Thus,
\begin{align*}
{\rm B}&\lesssim t^{-2\lambda-1}\exp\left(-\frac{x^2}{4t^2}\right)\exp\left(-\frac{|x-y|^2}{4t^2}\right)\int_{\pi/2}^\pi (\sin\theta)^{2\lambda-1}d\theta\\
&\lesssim (tx^{2\lambda})^{-1}\exp\left(-\frac{|x-y|^2}{4t^2}\right)\\
&\lesssim \frac{1}{m_\lambda(B_{\mathbb{R}_+}(x,\,t))}\exp\left(-\frac{|x-y|^2}{4t^2}\right).
\end{align*}
This ends the proof of (1).

Now we show (2). It follows from \eqref{middd} that
\begin{align}\label{t2220}
\partial_t K_{e^{-t^2 \Delta_\lambda^{(1)}}}(x,\,y)
&\sim  t^{-2\lambda-2}\int_0^\pi \exp\left(-\frac{x^2+y^2-2xy\cos\theta}{4t^2}\right)(\sin\theta)^{2\lambda-1}d\theta\nonumber\\
&\hspace{1.0cm}+ t^{-2\lambda-4}\int_0^\pi (x^2+y^2-2xy\cos\theta) \exp\left(-\frac{x^2+y^2-2xy\cos\theta}{4t^2}\right)(\sin\theta)^{2\lambda-1}d\theta\nonumber\\
&\lesssim t^{-2\lambda-2}\int_0^\pi \exp\left(-\frac{x^2+y^2-2xy\cos\theta}{8t^2}\right)(\sin\theta)^{2\lambda-1}d\theta.
\end{align}
Note that the integral term above is exactly the integral term on the right-hand side of \eqref{midd}, up to a harmless constant in the exponential term. Therefore, we obtain (2).

Finally, we show (3). It follows from \eqref{middd} that
\begin{align*}
\partial_{x} K_{e^{-t^2 \Delta_\lambda^{(1)}}}(x,\,y)
&=  -C_\lambda\int_0^\pi t^{-2\lambda-1}\frac{x-y\cos\theta}{t^2}\exp\left(-\frac{x^2+y^2-2xy\cos\theta}{4t^2}\right)(\sin\theta)^{2\lambda-1}d\theta\\
&\lesssim t^{-2\lambda-2}\int_0^\pi \exp\left(-\frac{x^2+y^2-2xy\cos\theta}{8t^2}\right)(\sin\theta)^{2\lambda-1}d\theta.
\end{align*}
Note that up to an absolute constant, the term above is equal to the term on the right-hand side of \eqref{t2220}. This finishes the proof of (3) in the case of $\alpha=1$, whereas the proof in the case of $\alpha>1$ can be obtained similarly, so we skip the details. Therefore, we obtain (3), while (4) is a well-known result (see e.g. \cite[Lemma 2.1]{MR2823879}).

Next we turn to the high-dimensional case $n\geq 1$. To begin with, we note that \eqref{Dlambda} can be written as $\Delta_\lambda^{(n+1)}=\Delta^{(n)}+\Delta_\lambda^{(1)}$, where $\Delta^{(n)}$ denotes the standard Laplacian on $\mathbb{R}^n$. Then for any $(x,\,y)=(x',\,x_{n+1},\,y',\,y_{n+1})\in\mathbb{R}_+^{n+1}\times \mathbb{R}_+^{n+1}$, the heat kernel can be decomposed as
$$K_{e^{-t^2 \Delta_\lambda}}(x,\,y)=K_{e^{-t^2 \Delta^{(n)}}}(x',\,y')K_{e^{-t^2 \Delta_\lambda^{(1)}}}(x_{n+1},\,y_{n+1}).$$
Then the estimates of (1)--(4) are reduced to the established one-dimensional case. Therefore, the proof of Lemma \ref{heat kernel} is complete.
\end{proof}
\subsection{Riesz transform kernel estimate associated with Bessel operator}
Denote by $K_{\lambda,\ell}(x,\,y)$ the kernel of the $\ell$-th Riesz transform $R_{\lambda,\ell}$. The first purpose of this subsection is to establish the size estimate and smoothness estimate at any order for this kernel.
\begin{lemma}\label{CZO}
For any $\ell\in\{1,2,\ldots,n+1\}$ and all multi-index $\alpha,\beta\in\mathbb{N}^{n+1}$, there is a constant $C_{\alpha,\beta}>0$ such that
\begin{align}\label{czooo}
|\partial_x^\alpha\partial_y^\beta K_{\lambda,\ell}(x,\,y)|\leq \frac{C_{\alpha,\beta}}{m_\lambda(B_{\mathbb{R}_+^{n+1}}(x,\,|x-y|))|x-y|^{|\alpha|+|\beta|}}
\end{align}
for $x$, $y\in\mathbb{R}_+^{n+1}$.
\end{lemma}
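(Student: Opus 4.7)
The natural starting point is the subordination formula
\[
\Delta_\lambda^{-1/2}=\frac{2}{\sqrt{\pi}}\int_0^\infty e^{-s^2\Delta_\lambda}\,ds,
\]
which yields the pointwise kernel representation
\[
K_{\lambda,\ell}(x,y)=\frac{2}{\sqrt{\pi}}\int_0^\infty \partial_{x_\ell}K_{e^{-s^2\Delta_\lambda}}(x,y)\,ds.
\]
The plan is to differentiate this identity $|\alpha|$ times in $x$ and $|\beta|$ times in $y$ under the integral sign (legitimate by dominated convergence once we have the pointwise heat kernel derivative bounds), and then estimate the resulting $s$-integral by splitting at $s=|x-y|$ and exploiting both effective dimensions of $(\mathbb{R}_+^{n+1},dm_\lambda)$ contained in \eqref{measureeee}.

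To access $\partial_y^\beta$, which is not explicitly covered by Lemma~\ref{heat kernel}, I would first note that since $\Delta_\lambda$ is self-adjoint on $L^2(\mathbb{R}_+^{n+1},dm_\lambda)$ its heat kernel is symmetric, $K_{e^{-t\Delta_\lambda}}(x,y)=K_{e^{-t\Delta_\lambda}}(y,x)$, so Lemma~\ref{heat kernel}(3) applied in the flipped variables gives the analogous bound for $\partial_y^\beta$ with $m_\lambda(B(x,t))$ replaced by $m_\lambda(B(y,t))$ (equivalent for our purposes once the Gaussian factor is invoked). To combine $\partial_x^{\alpha+e_\ell}$ and $\partial_y^\beta$, I would factor the kernel through the semigroup identity
\[
K_{e^{-s^2\Delta_\lambda}}(x,y)=\int_{\mathbb{R}_+^{n+1}}K_{e^{-s^2\Delta_\lambda/2}}(x,z)\,K_{e^{-s^2\Delta_\lambda/2}}(z,y)\,dm_\lambda(z),
\]
apply the one-sided derivative bounds to each factor, and glue the Gaussian factors through $e^{-c|x-z|^2/s^2}e^{-c|z-y|^2/s^2}\le e^{-c|x-y|^2/(2s^2)}$. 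This produces, after absorbing a heat-type convolution in $z$, the clean bound
\[
\bigl|\partial_x^{\alpha+e_\ell}\partial_y^\beta K_{e^{-s^2\Delta_\lambda}}(x,y)\bigr|\lesssim \frac{1}{s^{|\alpha|+|\beta|+1}m_\lambda(B_{\mathbb{R}_+^{n+1}}(x,s))}\exp\!\Bigl(-c\frac{|x-y|^2}{s^2}\Bigr).
\]

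Plugging this into the subordination integral, the proof reduces to the scalar estimate
\[
\int_0^\infty \frac{e^{-c|x-y|^2/s^2}}{s^{|\alpha|+|\beta|+1}\,m_\lambda(B_{\mathbb{R}_+^{n+1}}(x,s))}\,ds\lesssim \frac{1}{m_\lambda(B_{\mathbb{R}_+^{n+1}}(x,|x-y|))\,|x-y|^{|\alpha|+|\beta|}},
\]
which I would handle by splitting at $s=|x-y|$. For $s\ge|x-y|$, the lower dimension $n+1$ read off from \eqref{measureeee} gives $m_\lambda(B(x,s))\gtrsim (s/|x-y|)^{n+1}m_\lambda(B(x,|x-y|))$ and the polynomial tail is integrable; for $s\le|x-y|$, the upper dimension $n+1+2\lambda$ gives $m_\lambda(B(x,s))\gtrsim (s/|x-y|)^{n+1+2\lambda}m_\lambda(B(x,|x-y|))$, and the substitution $u=|x-y|/s$ converts the integral to $\int_1^\infty u^{|\alpha|+|\beta|+n+2\lambda}e^{-cu^2}\,du$, which is finite by Gaussian decay. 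The only genuine subtlety is precisely this last point: because the Bessel measure is \emph{not} regular in the sense that its two dimensions differ, one must invoke the upper dimension near the singularity and the lower dimension near infinity, and the Gaussian factor is essential for the former. Everything else in the argument is standard once the representation via heat kernels and the self-adjoint symmetry are in hand.
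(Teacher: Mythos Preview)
Your proposal is correct and follows essentially the same route as the paper: subordination formula, heat kernel derivative bounds, then estimate the resulting $s$-integral. The paper simply asserts the mixed bound $|\partial_x^{\alpha+e_\ell}\partial_y^\beta K_{e^{-t\Delta_\lambda}}(x,y)|\lesssim t^{-(|\alpha|+|\beta|+1)/2}m_\lambda(B(x,\sqrt t))^{-1}e^{-c|x-y|^2/t}$ without comment, whereas you supply it cleanly via symmetry of the heat kernel and the semigroup factorization; and for the $s$-integral the paper uses the single doubling inequality $m_\lambda(B(x,s))^{-1}\lesssim m_\lambda(B(x,|x-y|))^{-1}(1+|x-y|/s)^{n+1+2\lambda}$ and absorbs the polynomial into the Gaussian, while you split at $s=|x-y|$ and invoke the lower and upper dimensions separately. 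These are equivalent minor variations rather than a different argument.
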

\begin{proof}
To begin with, by spectral theory, we have the following subordinate formula:
\begin{align*}
(-\Delta_\lambda)^{-1/2}=\frac{1}{\sqrt{\pi}}\int_0^\infty t^{-1/2}e^{-t\Delta_\lambda}dt.
\end{align*}
This implies that for any multi-index $\alpha,\beta\in\mathbb{N}^{n+1}$,
\begin{align}\label{exxxt}
|\partial_x^\alpha\partial_y^\beta K_{\lambda,\ell}(x,\,y)|&=\left|\frac{1}{\sqrt{\pi}}\int_0^\infty t^{-1/2} \partial_x^{\alpha}\partial_y^\beta \partial_{x_\ell} K_{e^{-t \Delta_\lambda} }(x,\,y)dt\right|\nonumber\\
&\lesssim \int_0^\infty  \frac{1}{t^{\frac{|\alpha|+|\beta|}{2}}m_\lambda(B_{\mathbb{R}_+^{n+1}}(x,\,\sqrt{t}))}\exp\left(-c\frac{|x-y|^2}{t}\right)\frac{dt}{t}.
\end{align}
To continue, we note that inequality \eqref{doub} implies that
\begin{align*}
\frac{1}{m_\lambda(B_{\mathbb{R}_+^{n+1}}(x,\,\sqrt{t}))}\lesssim \frac{1}{m_\lambda(B_{\mathbb{R}_+^{n+1}}(x,\,|x-y|))}\left(1+\frac{|x-y|}{\sqrt{t}}\right)^{n+1+2\lambda}.
\end{align*}
Since the term $\left(1+\frac{|x-y|}{\sqrt{t}}\right)^{n+1+2\lambda}$ can be absorbed into the exponential term, we have
\begin{align*}
{\rm RHS}\ {\rm of}\ \eqref{exxxt}
&\lesssim \int_0^\infty  \frac{1}{t^{\frac{|\alpha|+|\beta|}{2}}m_\lambda(B_{\mathbb{R}_+^{n+1}}(x,\,|x-y|))}\exp\left(-c\frac{|x-y|^2}{2t}\right)\frac{dt}{t}\\
&\lesssim \frac{1}{m_\lambda(B_{\mathbb{R}_+^{n+1}}(x,\,|x-y|))|x-y|^{|\alpha|+|\beta|}}.
\end{align*}
This ends the proof of Lemma \ref{CZO}.
\end{proof}

The second purpose of this subsection is to establish the following non-degenerate lower bound of Riesz transform kernel associated with Bessel operator, which can be regarded as a suitable substitution of homogeneity in the case of classical Euclidean Riesz transform kernel.
\begin{lemma}\label{sign}
Given $\ell\in\{1,2,\ldots,n+1\}$ and a
system of dyadic cubes $\mathcal{D}
    := \cup_{k\in\mathbb{Z}}\mathcal{D}_{k}$ on $\mathbb{R}_+^{n+1}$ with parameter $\delta\in (0,\,1)$. Then there exist constants  $c_1,c_2>0$ such that for any $Q\in \mathcal{D}_{k}$, we can find a dyadic cube $\hat{Q}\in\mathcal{D}_k$ such that $c_1\delta^k\leq {\rm dist}(Q,\,\hat{Q})\leq c_2\delta^k$, and for all $(x,\,y)\in Q\times\hat{Q}$, $K_{\lambda,\ell}(x,\,y)$ does not change sign and satisfies
\begin{align}\label{looower}
|K_{\lambda,\ell}(x,\,y)|\geq \frac{C}{m_\lambda(Q)}
\end{align}
for some constant $C>0$.
\end{lemma}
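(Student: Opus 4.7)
The plan is to derive an explicit integral representation of the Bessel Riesz transform kernel via heat-semigroup subordination, and then to exploit its sign structure by choosing $\hat{Q}$ to be an appropriately shifted dyadic cube. Starting from $\Delta_\lambda^{-1/2}=\pi^{-1/2}\int_0^\infty t^{-1/2}e^{-t\Delta_\lambda}\,dt$ together with the factorization $K_{e^{-t\Delta_\lambda}}(x,y)=K_{e^{-t\Delta^{(n)}}}(x',y')\,K_{e^{-t\Delta_\lambda^{(1)}}}(x_{n+1},y_{n+1})$ and the explicit integral representation of $K_{e^{-t\Delta_\lambda^{(1)}}}$ already used in the proof of Lemma \ref{heat kernel}, I would differentiate in $x_\ell$ and apply Fubini to obtain, for $\ell\in\{1,\ldots,n\}$,
$$K_{\lambda,\ell}(x,y)=-\frac{x_\ell-y_\ell}{2\sqrt{\pi}}\int_0^\infty t^{-3/2}K_{e^{-t\Delta_\lambda}}(x,y)\,dt,$$
while for $\ell=n+1$, after the substitution $u=\cos\theta$ and the Gamma identity $\int_0^\infty t^{-\alpha-1}e^{-a/t}\,dt=\Gamma(\alpha)a^{-\alpha}$,
$$K_{\lambda,n+1}(x,y)=-C_{\lambda,n}\int_{-1}^1\frac{(1-u^2)^{\lambda-1}(x_{n+1}-y_{n+1}u)}{\bigl(|x'-y'|^2+x_{n+1}^2+y_{n+1}^2-2x_{n+1}y_{n+1}u\bigr)^{(n+2)/2+\lambda}}\,du.$$
In the first formula the sign of $K_{\lambda,\ell}$ equals $-\operatorname{sgn}(x_\ell-y_\ell)$; in the second, whenever $x_{n+1}\geq y_{n+1}$ the numerator satisfies $x_{n+1}-y_{n+1}u\geq x_{n+1}-y_{n+1}\geq 0$ on $u\in[-1,1]$, so the integrand is pointwise nonnegative.

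For the construction of $\hat{Q}$, fix a large universal constant $A$. Given $Q\in\mathcal{D}_k$ with center $x_Q$, set $z_Q:=x_Q+A\delta^k e_\ell$ when $\ell\in\{1,\ldots,n\}$, and $z_Q:=x_Q-A\delta^k e_{n+1}$ when $\ell=n+1$ (reversing the sign if $x_Q^{n+1}\leq A\delta^k$, so that $z_Q\in\mathbb{R}_+^{n+1}$). Property (I) of the dyadic system singles out the unique $\hat{Q}\in\mathcal{D}_k$ containing $z_Q$, and property (V) gives $|x_{\hat{Q}}-z_Q|\leq 4\delta^k$. A direct triangle-inequality argument then shows $c_1\delta^k\leq {\rm dist}(Q,\hat{Q})\leq c_2\delta^k$, and that for every $(x,y)\in Q\times\hat{Q}$ the relevant coordinate difference has constant sign with magnitude $\sim\delta^k$ and $|x-y|\sim\delta^k$.

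To obtain the lower bound, the doubling inequality \eqref{doub} together with $|x-y|\sim\delta^k$ yields $m_\lambda(B_{\mathbb{R}_+^{n+1}}(x,|x-y|))\sim m_\lambda(Q)$. The last needed ingredient is the matching lower Gaussian bound for the Bessel heat kernel,
$$K_{e^{-t\Delta_\lambda}}(x,y)\gtrsim\frac{1}{m_\lambda(B_{\mathbb{R}_+^{n+1}}(x,\sqrt{t}))}\quad\text{for }t\in[c_1|x-y|^2,\,c_2|x-y|^2],$$
which I would read off from the explicit representation used in Lemma \ref{heat kernel} by bounding the exponential below by a constant on a neighbourhood of the optimizing $\theta$. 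Plugging this into the formulas above produces, for $\ell\leq n$,
$$|K_{\lambda,\ell}(x,y)|\gtrsim\delta^k\int_{c_1\delta^{2k}}^{c_2\delta^{2k}}\frac{t^{-3/2}}{m_\lambda(Q)}\,dt\gtrsim\frac{1}{m_\lambda(Q)},$$
and the analogous estimate using $D_u\sim|x-y|\sim\delta^k$ dispatches $\ell=n+1$ whenever the downward shift is available.

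The main technical obstacle is the case $\ell=n+1$ with $Q$ very close to the boundary ($x_Q^{n+1}\lesssim\delta^k$), where only the upward shift is available and $y_{n+1}>x_{n+1}$ forces $x_{n+1}-y_{n+1}u$ to change sign on $[-1,1]$. In that subcase the sign and non-degeneracy of $K_{\lambda,n+1}$ must be extracted from the leading asymptotic term, obtained by expanding $D_u^{-(n+2)-2\lambda}$ in powers of $x_{n+1}y_{n+1}/(|x'-y'|^2+y_{n+1}^2)$ and exploiting the odd-moment cancellation $\int_{-1}^1 u(1-u^2)^{\lambda-1}\,du=0$. Matching the resulting bound against $m_\lambda(Q)\sim\delta^{k(n+1+2\lambda)}$ in this near-boundary regime is the most delicate part of the argument and is where the Bessel exponent explicitly enters.
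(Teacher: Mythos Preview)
Your route is genuinely different from the paper's. The paper never computes the kernel explicitly; it quotes a pointwise non-degeneracy result from \cite{DGKLWY} asserting that for every $x\in\mathbb{R}_+^{n+1}$ and $r>0$ there is some $y$ in the annulus $B_{\mathbb{R}_+^{n+1}}(x,\bar{C}r)\setminus B_{\mathbb{R}_+^{n+1}}(x,r)$ with $|K_{\lambda,\ell}(x,y)|\gtrsim m_\lambda(B_{\mathbb{R}_+^{n+1}}(x,r))^{-1}$. Applying this at $x=x_Q$ with $r=M\delta^k$ produces a single good pair $(x_0,y_0)$; one takes $\hat{Q}\in\mathcal{D}_k$ to be the cube containing $y_0$, and then the smoothness estimate of Lemma~\ref{CZO} gives $|K_{\lambda,\ell}(x,y)-K_{\lambda,\ell}(x_0,y_0)|\lesssim M^{-1}m_\lambda(Q)^{-1}$ on $Q\times\hat{Q}$, which for $M$ large forces constant sign and the lower bound. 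This perturbation argument is completely insensitive to where $Q$ sits relative to the boundary. Your explicit-kernel approach, by contrast, yields a concrete $\hat{Q}$ (a fixed shift of $Q$) and, for $\ell\leq n$, a clean global sign factor $-(x_\ell-y_\ell)$; this is more informative but also more fragile.

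The boundary subcase you flag for $\ell=n+1$ is a real gap, not a cosmetic one. When $x_Q^{n+1}\lesssim\delta^k$ you are forced to shift upward, and the numerator $x_{n+1}-y_{n+1}u$ then changes sign on $[-1,1]$. Your proposed expansion in the parameter $x_{n+1}y_{n+1}/(|x'-y'|^2+y_{n+1}^2)$ is only small when $x_{n+1}\ll\delta^k$; in the intermediate regime $x_{n+1}\sim\delta^k$ (too close to the boundary to shift down by $A\delta^k$, yet not small enough for the expansion) neither device applies. For $n\geq 1$ one can rescue this by shifting simultaneously in some $e_j$ with $j\leq n$ so that $|x'-y'|\gg\delta^k$ and the expansion parameter becomes uniformly small, but for $n=0$ there is no such escape hatch and the entire lemma \emph{is} the $\ell=n+1$ case. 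The cleanest fix is exactly the paper's: once you have any single pair $(x_0,y_0)$ with the lower bound, the first-order estimate \eqref{czooo} propagates it to all of $Q\times\hat{Q}$ without any further kernel analysis.
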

\begin{proof}
To begin with, we recall from \cite[Propositions 7.5 and 7.6]{DGKLWY} that there are positive constants $c_0$ and $\bar{C}$ such that for any $x\in \mathbb{R}_+^{n+1}$ and $r>0$ such that there exists $y\in B_{\mathbb{R}_+^{n+1}}(x,\,\bar{C}r)\backslash B_{\mathbb{R}_+^{n+1}}(x,\,r)$ satisfying
$$|K_{\lambda,\ell}(x,\,y)|\geq \frac{c_0}{m_\lambda(B_{\mathbb{R}_+^{n+1}}(x,\,r))}.$$
For any $Q\in\mathcal{D}_k$, we let $x_0:=(x_{(1)},\ldots,x_{(n+1)})$ be its center and let $M$ be a sufficiently large constant to be chosen later. Applying the above fact and then inequality \eqref{doub}, there exists $y_0\in B_{\mathbb{R}_+^{n+1}}(x_0,\,\bar{C}M\delta^k)\backslash B_{\mathbb{R}_+^{n+1}}(x_0,\,M\delta^k)$ satisfying
$$|K_{\lambda,\ell}(x_0,\,y_0)|\geq \frac{c_0}{m_\lambda(B_{\mathbb{R}_+^{n+1}}(x_0,\,M\delta^k))}$$
for some constant $c_0>0$.
To continue, we let $\hat{Q}$ be the unique cube in $\mathcal{D}_k$ such that its closure contains $y_0$. Since $M$ is a sufficiently large constant,  there are  constants  $a_1,a_2>0$ such that $a_1M\delta^k\leq {\rm dist}(Q,\,\hat{Q})\leq a_2M\delta^k$.

Moreover, by Lemma \ref{CZO}, for any $(x,\,y)\in Q\times\hat{Q}$, we have
\begin{align*}
|K_{\lambda,\ell}(x,\,y)-K_{\lambda,\ell}(x_{0},\,y_{0})|&\leq |K_{\lambda,\ell}(x,\,y)-K_{\lambda,\ell}(x,\,y_{0})|+|K_{\lambda,\ell}(x,\,y_{0})-K_{\lambda,\ell}(x_{0},\,y_{0})|\\
&\lesssim\frac{|y-y_{0}|}{m_\lambda(B_{\mathbb{R}_+^{n+1}}(x,\,|x-y|))|x-y|}+\frac{|x-x_{0}|}{m_\lambda(B_{\mathbb{R}_+^{n+1}}(x_0,\,|x_0-y_0|))|x_{0}-y_{0}|}\\
&\leq \frac{C_1}{Mm_\lambda(B_{\mathbb{R}_+^{n+1}}(x_0,\,M\delta^k))}
\end{align*}
for some constant $C_1>0$. To continue, we choose $M>\frac{2C_1}{C_0}$ and then divide the proof into two cases.

{\bf Case 1.} If $K_{\lambda,\ell}(x_{0},\,y_{0})>0$, then
\begin{align*}
K_{\lambda,\ell}(x,\,y)
&\geq K_{\lambda,\ell}(x_{0},\,y_{0})-|K_{\lambda,\ell}(x,\,y)-K_{\lambda,\ell}(x_{0},\,y_{0})|\\
&\geq \frac{C_0}{m_\lambda(B_{\mathbb{R}_+^{n+1}}(x_0,\,M\delta^k))}-\frac{C_1}{Mm_\lambda(B_{\mathbb{R}_+^{n+1}}(x_0,\,M\delta^k))}\\
&= \frac{C_0}{2m_\lambda(B_{\mathbb{R}_+^{n+1}}(x_0,\,M\delta^k))}\\
&\geq \frac{C}{m_\lambda(Q)}.
\end{align*}

{\bf Case 2.} If $K_{\lambda,\ell}(x_{0},\,y_{0})<0$, then
\begin{align*}
K_{\lambda,\ell}(x,\,y)
&\leq K_{\lambda,\ell}(x_{0},\,y_{0})+|K_{\lambda,\ell}(x,\,y)-K_{\lambda,\ell}(x_{0},\,y_{0})|\\
&\leq -\frac{C_0}{m_\lambda(B_{\mathbb{R}_+^{n+1}}(x_0,\,M\delta^k))}+\frac{C_1}{Mm_\lambda(B_{\mathbb{R}_+^{n+1}}(x_0,\,M\delta^k))}\\
&=-\frac{C_0}{2m_\lambda(B_{\mathbb{R}_+^{n+1}}(x_0,\,M\delta^k))}\\
&\leq -\frac{C}{m_\lambda(Q)}.
\end{align*}
Applying inequality \eqref{doub} and then choosing the constants $c_i=Ma_i$, $i=1,2$, we completes the proof of Lemma \ref{sign}.
\end{proof}

\section{A summability self-increased Lemma}\label{Sec3}
\setcounter{equation}{0}
Inspired by \cite[Proposition 4.1]{RS}, we will establish a summability self-increased Lemma in this section, which states that the definition of oscillation space norm does not depend on the integrability index of mean oscillation. This property plays a technical role in establishing the upper bound for the Schatten--Lorentz norm of the Riesz transform commutator (see Proposition \ref{divide2}). To establish a summability self-increased Lemma, we first recall the following Schur's test Lemma from \cite[Lemma 3.2]{MR654483}.
\begin{lemma}\label{SchurL}
Suppose that $1<p<\infty$. Let $A=(a_{ij})$ be an infinite matrix with non-negative entries. Suppose that there is a constant $K>0$ and a non-negative sequence $\{b_i\}$ such that
$$\sum_ja_{ij}b_j^{p'}\leq Kb_i^{p'},\ {\rm for}\ {\rm any}\ j=1,2,\ldots,$$
$$\sum_ia_{ij}b_i^p\leq Kb_j^p,\ {\rm for}\ {\rm any}\ j=1,2,\ldots.$$
Then the map $T:\ (f_i)_i\rightarrow (\sum_{j}a_{ij}f_j)_i$ is bounded on $\ell^p$ with norm bounded by $K$.
\end{lemma}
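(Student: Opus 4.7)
The plan is to prove this as a standard weighted Schur test by a judicious Hölder decomposition of the matrix entries. Fix $f=(f_j)\in\ell^p$ and write $|a_{ij}f_j|=\bigl(a_{ij}^{1/p'}b_j\bigr)\cdot\bigl(a_{ij}^{1/p}b_j^{-1}|f_j|\bigr)$, so that Hölder's inequality in the index $j$ with conjugate exponents $p'$ and $p$ yields
\begin{align*}
|(Tf)_i|^p \leq \Bigl(\sum_{j}a_{ij}b_j^{p'}\Bigr)^{\!p/p'}\,\Bigl(\sum_{j}a_{ij}b_j^{-p}|f_j|^p\Bigr).
\end{align*}
The first hypothesis (read with $i$ as the free index, as the statement evidently intends) bounds the first factor by $K^{p/p'}b_i^p$.

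Next I would sum over $i$ and invoke Tonelli to swap the order of summation:
\begin{align*}
\sum_i|(Tf)_i|^p \leq K^{p/p'}\sum_i b_i^p\sum_j a_{ij}b_j^{-p}|f_j|^p = K^{p/p'}\sum_j b_j^{-p}|f_j|^p\sum_i a_{ij}b_i^p.
\end{align*}
The second hypothesis bounds the inner $i$-sum by $Kb_j^p$, so $\|Tf\|_{\ell^p}^p\leq K^{p/p'+1}\|f\|_{\ell^p}^p=K^p\|f\|_{\ell^p}^p$, giving $\|T\|_{\ell^p\to\ell^p}\leq K$.

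The only nontrivial choice in this argument is the Hölder splitting itself: one must insert the Schur weight $b_j$ so that applying the first hypothesis produces the factor $b_i^p$ that is \emph{compatible} with the second hypothesis after the Tonelli swap. Once this pairing is seen, the proof is essentially forced. I do not anticipate any genuine obstacle here; the argument is entirely formal and requires only that $a_{ij}\geq 0$ and $b_j>0$ so that the powers $b_j^{p'}$ and $b_j^{-p}$ are well defined. The presentation in the paper can therefore be kept to a couple of lines citing \cite{MR654483}.
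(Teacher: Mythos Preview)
Your argument is correct and is precisely the classical Schur test computation; the paper itself does not give a proof but merely cites \cite[Lemma 3.2]{MR654483}, which records exactly this H\"older-then-Tonelli argument. Your reading of the first hypothesis with $i$ as the free index is the intended one, and your remark that one tacitly needs $b_j>0$ (or else discards the null indices, which the two hypotheses force to decouple from the rest) is accurate.
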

To establish a summability self-increased Lemma in the Bessel setting, we need to introduce an auxiliary heat maximal function associated with Bessel operator.
For any $b\in L_{{\rm loc}}^1(\mathbb{R}_+^{n+1},dm_\lambda)$, this maximal function over Carleson box centered at $(x,\,t)\in\mathbb{R}_+^{n+1}$ is defined by $$S_\lambda(b)(x,\,t):=\sup\left\{s\left|\nabla e^{-s^2 {\Delta_\lambda}}b(y)\right|:\ y\in B_{\mathbb{R}_+^{n+1}}(x,\,t),\ \delta t\leq s\leq t\right\},$$
where $\nabla$ is the full gradient given by $\nabla=(\partial_{x_1},\ldots,\partial_{x_{n+1}},\partial_t)$.
\begin{lemma}\label{aujxi}
Let $1<p<\infty$, $1\leq q\leq \infty$ and $b\in L_{{\rm loc}}^1(\mathbb{R}_+^{n+1},dm_\lambda)$. Then for any $\nu\in\{1,2,\ldots ,\kappa\}$, we have
\begin{align*}
\left\|\left\{\sup\limits\left\{S_\lambda(b)(x,\,t):x\in Q,\ \delta\ell(Q)\leq t\leq \ell(Q)\right\}\right\}_{Q\in\mathcal{D}^\nu}\right\|_{\ell^{p,q}}\lesssim\sum_{\iota=1}^\kappa\left\|\left\{MO_Q(b)\right\}_{Q\in\mathcal{D}^\iota}\right\|_{\ell^{p,q}}.
\end{align*}
\end{lemma}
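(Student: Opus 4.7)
The plan is to dominate the maximal function pointwise by a weighted sum of mean oscillations of $b$ over dyadic cubes and then obtain the $\ell^{p,q}$ bound via a Schur-type estimate on the resulting ``nearly diagonal'' shift operator. The starting point is conservation of mass, Lemma \ref{heat kernel}(4), which says that $\int K_{e^{-s^2\Delta_\lambda}}(y,z)\,dm_\lambda(z)=1$ independently of $(y,s)$. Differentiating under the integral (justified by the Gaussian bounds in Lemma \ref{heat kernel}(2)--(3)) gives $\int \nabla_{(y,s)}K_{e^{-s^2\Delta_\lambda}}(y,z)\,dm_\lambda(z)=0$, so for any constant $c_0$,
\begin{equation*}
s\,\nabla e^{-s^2\Delta_\lambda}b(y)=\int s\,\nabla_{(y,s)}K_{e^{-s^2\Delta_\lambda}}(y,z)\bigl(b(z)-c_0\bigr)\,dm_\lambda(z).
\end{equation*}
Choosing $c_0=(b)_{B_{\mathbb{R}_+^{n+1}}(y,s)}$, decomposing the integral into Bessel annuli $2^{k+1}B_{\mathbb{R}_+^{n+1}}(y,s)\setminus 2^kB_{\mathbb{R}_+^{n+1}}(y,s)$, and absorbing the polynomial measure growth from \eqref{doub} into the Gaussian factor $e^{-c4^k}$, the standard telescoping estimate $|(b)_{2^kB}-(b)_B|\lesssim \sum_{i=0}^{k}MO_{2^iB}(b)$ yields
\begin{equation*}
s\,|\nabla e^{-s^2\Delta_\lambda}b(y)|\lesssim \sum_{j\geq 0}e^{-c'4^j}\,MO_{2^jB_{\mathbb{R}_+^{n+1}}(y,s)}(b).
\end{equation*}

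The next step is to pass from balls to dyadic cubes. For $x\in Q\in\mathcal{D}^\nu$, $\delta\ell(Q)\leq t\leq\ell(Q)$, $y\in B_{\mathbb{R}_+^{n+1}}(x,t)$, $\delta t\leq s\leq t$, the ball $2^jB_{\mathbb{R}_+^{n+1}}(y,s)$ has radius $\sim 2^j\ell(Q)$ and lies within a fixed dilate of $Q$. By the adjacent-cube property \eqref{eq:ball;included} of Lemma \ref{thm:existence2}, there exist $\iota=\iota(Q,j)\in\{1,\dots,\kappa\}$ and $R=R(Q,j)\in\mathcal{D}^\iota$ with $\ell(R)\sim 2^j\ell(Q)$ containing this ball, and the doubling condition \eqref{doub} then gives $MO_{2^jB_{\mathbb{R}_+^{n+1}}(y,s)}(b)\lesssim MO_R(b)$. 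Taking the supremum over the Carleson region and summing in $j$ produces
\begin{equation*}
\sup\bigl\{S_\lambda(b)(x,t):x\in Q,\ \delta\ell(Q)\leq t\leq\ell(Q)\bigr\}\lesssim \sum_{\iota=1}^\kappa \sum_{R\in\mathcal{D}^\iota} a^\iota_{Q,R}\,MO_R(b),
\end{equation*}
where $a^\iota_{Q,R}\lesssim e^{-c'4^j}$ when $R$ sits ``at level $j$ above $Q$'' in $\mathcal{D}^\iota$ (with $\ell(R)\sim 2^j\ell(Q)$ and $R$ geometrically close to $Q$ at that scale), and vanishes otherwise.

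The final step is an $\ell^{p,q}$ bound for each shift operator $T_\iota:\{MO_R(b)\}_{R\in\mathcal{D}^\iota}\mapsto\bigl\{\sum_R a^\iota_{Q,R}MO_R(b)\bigr\}_{Q\in\mathcal{D}^\nu}$. For each fixed $\iota$, at scale $j$ only $\lesssim 2^{j(n+1)}$ admissible pairs $(Q,R)$ contribute, a quantity easily absorbed by $e^{-c'4^j}$, so both row sums $\sup_Q\sum_R a^\iota_{Q,R}$ and column sums $\sup_R\sum_Q a^\iota_{Q,R}$ are finite. Schur's test (Lemma \ref{SchurL}) with $b_Q\equiv 1$ then gives $\ell^p$-boundedness for every $1<p<\infty$ with a uniform constant; real interpolation (Marcinkiewicz) between two such $\ell^p$ endpoints upgrades this to the required $\ell^{p,q}$ bound, and summing in $\iota$ closes the argument. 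The main obstacle I anticipate is the bookkeeping in the second paragraph: one must verify that the geometric correspondence $Q\mapsto R(Q,j)$ is controlled by the adjacent-cube property uniformly in $j$, and that the resulting kernel $a^\iota_{Q,R}$ is simultaneously summable in rows and columns with bounds independent of $Q$ and $R$.
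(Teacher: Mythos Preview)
Your approach is correct and follows essentially the same path as the paper: subtract a constant via conservation of mass (Lemma~\ref{heat kernel}(4)), use the heat kernel gradient bounds to produce a rapidly decaying sum of mean oscillations over dilated balls, and pass to dyadic cubes via the adjacent systems of Lemma~\ref{thm:existence2}. The only differences are in implementation: the paper subtracts $(b)_{B_Q}$ with $B_Q$ depending only on $Q$ (which removes the $(y,s)$--dependence and streamlines exactly the bookkeeping you flag), and in the final step it uses the triangle inequality in $\ell^{p,q}$ together with the direct counting bound that each $P_Q^k$ arises from at most $C\delta^{-k(n+1+2\lambda)}$ cubes $Q$, in place of your Schur--plus--interpolation argument; note also that your column count should be $2^{j(n+1+2\lambda)}$ rather than $2^{j(n+1)}$, though the Gaussian factor absorbs either.
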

\begin{proof}
To begin with, recall from \eqref{eq:contain} that for any $Q\in\mathcal{D}_j^\nu$ centered at $c_Q\in\mathbb{R}_+^{n+1}$, there is a Bessel ball $B_Q:=B_{\mathbb{R}_{+}^{n+1}}(c_Q,\,4\delta^j)$ such that $Q\subset B_Q$. Next, we denote
 \begin{align}\label{UjB}
 U_k(B_Q):=\left\{\begin{array}{ll}\delta^{-1}B_Q, &k=0,\\ \delta^{-k-1}B_Q\setminus \delta^{-k}B_Q, &k\geq 1.\end{array}\right.
 \end{align}
  By Lemma \ref{heat kernel}, for any sufficiently large constant $N>0$ to be chosen later,
\begin{align}\label{hjlll}
&s\left|\nabla e^{-s^2{\Delta_\lambda}}b(y)\right|\nonumber\\
&=s\left|\nabla\int_{\mathbb{R}_+^{n+1}}(b(z)-(b)_{B_Q})K_{e^{-s^2{\Delta_\lambda}}}(y,\,z)dm_\lambda(z)\right|\nonumber\\
&\leq\int_{\mathbb{R}_+^{n+1}}|b(z)-(b)_{B_Q}||s\nabla K_{ e^{-s^2{\Delta_\lambda}}}(y,\,z)|dm_\lambda(z)\nonumber\\
&\lesssim\sum_{k\geq 0}\frac{1}{m_\lambda(B_{\mathbb{R}_+^{n+1}}(y,\,s))}\int_{U_k(B_Q)}|b(z)-(b)_{B_Q}| \left(\frac{s}{s+|y-z|}\right)^N dm_\lambda(z)\nonumber\\
&\lesssim \sum_{k\geq 0}\delta^{kN}\fint_{\delta^{-k-1}B_Q}|b(z)-(b)_{B_Q}|dm_\lambda(z)\nonumber\\
&\lesssim \sum_{k\geq 0}\delta^{k(N-n-1-2\lambda)}\fint_{\delta^{-k-1}B_Q}|b(z)-(b)_{\delta^{-k-1}B_Q}|dm_\lambda(z)
\end{align}
for some implicit constant $C>0$ independent of $x\in Q\in\mathcal{D}^\nu,\ \delta\ell(Q)\leq t\leq \ell(Q)$, $y\in B_{\mathbb{R}_+^{n+1}}(x,\,t)$ and $\delta t\leq s\leq t$,
where the last two inequalities used the doubling inequality \eqref{doub}. To continue, we apply Lemma \ref{thm:existence2} to see that for any $k\in\mathbb{Z}$ and $Q\in\mathcal{D}_j^\nu$, there is a $\iota\in\{1,2,\ldots ,\kappa\}$ and a cube $P_Q^k\in \mathcal{D}_{j-k-1}^\iota$ such that
\begin{equation}
    \delta^{-k-1}B_Q\subseteq P_Q^k\subseteq C_{{\rm adj}}\delta^{-k-1}B_Q.
\end{equation}
Therefore,
\begin{align}\label{rhss1}
{\rm RHS}\ {\rm of}\ \eqref{hjlll}\lesssim\sum_{k\geq 0}\delta^{k(N-n-1-2\lambda)}\fint_{P_Q^k}|b(z)-(b)_{P_Q^k}|dm_\lambda(z).
\end{align}
Note that for any $k\in\mathbb{Z}$ and $Q\in\mathcal{D}^\nu_j$, we have $Q\subset P_Q^k$. Therefore, the number of $Q'\in\mathcal{D}^\nu_j$ such that $P_Q^k=P_{Q'}^k$ is at most $c\delta^{-k(n+1+2\lambda)}$. This, in combination with inequality \eqref{rhss1}, yields
\begin{align*}
&\left\|\left\{\sup\limits\left\{S_\lambda(b)(x,\,t):x\in Q,\ \delta\ell(Q)\leq t\leq \ell(Q)\right\}\right\}_{Q\in\mathcal{D}^\nu}\right\|_{\ell^{p,q}}\\
&\lesssim \sum_{k\geq 0}\delta^{kN}\left\|\left\{\fint_{P_Q^k}|b(z)-(b)_{P_Q^k}|dm_\lambda(z)\right\}_{Q\in\mathcal{D}^\nu}\right\|_{\ell^{p,q}}\\
&\lesssim \sum_{\iota=1}^\kappa\sum_{k\geq 0}\delta^{k(N-2(n+1+2\lambda))}\left\|\left\{\fint_{Q}|b(z)-(b)_{Q}|dm_\lambda(z)\right\}_{Q\in\mathcal{D}^\iota}\right\|_{\ell^{p,q}}\\
&\lesssim \sum_{\iota=1}^\kappa\|\{MO_Q(b)\}_{Q\in\mathcal{D}^\iota}\|_{\ell^{p,q}},
\end{align*}
where the last inequality holds since $N$ can be chosen to be larger than $2(n+1+2\lambda)$. This ends the proof of Lemma \ref{aujxi}.
\end{proof}
For any $1\leq r<\infty$ and any $b\in L_{{\rm loc}}^r(\mathbb{R}_+^{n+1},dm_\lambda)$, we define its mean oscillation of power $r$ over a cube $Q$ by
\begin{align*}
MO_Q^r(b):=\left(\fint_{Q}|b(x)-(b)_Q|^rdm_\lambda(x)\right)^{1/r}.
\end{align*}
Then the summability self-increased Lemma is formulated as follows.
\begin{lemma}\label{technic}
Let  $1<p<\infty$ and $1\leq q\leq \infty$. Then the following statements are equivalent:
\begin{enumerate}
  \item For any $r\in [1,\infty)$ and any $\nu\in\{1,2,\ldots ,\kappa\}$, we have $\{MO_Q^r(b)\}_{Q\in\mathcal{D}^\nu}\in \ell^{p,q}$.
  \item For any $\nu\in\{1,2,\ldots ,\kappa\}$, we have $\{MO_Q(b)\}_{Q\in\mathcal{D}^\nu}\in \ell^{p,q}$.
\end{enumerate}

\end{lemma}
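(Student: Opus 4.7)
The direction $(1)\Rightarrow(2)$ is immediate from Hölder's inequality: for any $r\geq 1$ and any cube $Q$ we have $MO_Q(b)\leq MO_Q^r(b)$, so the $\ell^{p,q}$ quasi-norm of $\{MO_Q(b)\}_Q$ is controlled cubewise by that of $\{MO_Q^r(b)\}_Q$. For the non-trivial direction $(2)\Rightarrow(1)$, the plan is to bound $MO_Q^r(b)$ pointwise in $Q$ by a weighted sum of heat maximal function values and then to invoke Lemma~\ref{aujxi}. Fix $Q\in\mathcal{D}^\nu$ and set $\tilde b:=e^{-\ell(Q)^2\Delta_\lambda}b$. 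Splitting
\[
b(x)-(b)_Q=[b(x)-\tilde b(x)]+[\tilde b(x)-(\tilde b)_Q]+[(\tilde b)_Q-(b)_Q],
\]
the middle (smooth) piece will be handled directly by the gradient bound in Lemma~\ref{heat kernel}(3) and the mean value theorem, giving $|\tilde b(x)-(\tilde b)_Q|\lesssim \ell(Q)\sup_{y\in Q}|\nabla\tilde b(y)|\lesssim \sup\{S_\lambda(b)(y,t):y\in Q,\,\delta\ell(Q)\leq t\leq\ell(Q)\}$ by the very definition of $S_\lambda$.

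For the rough piece $b-\tilde b$, I would use the fundamental-theorem representation
\[
b(x)-\tilde b(x) = -\int_0^{\ell(Q)}\partial_t e^{-t^2\Delta_\lambda}b(x)\,dt,
\]
and estimate $|\partial_t e^{-t^2\Delta_\lambda}b(x)|$ by combining Lemma~\ref{heat kernel}(2) with the annular decomposition $\{\delta^{-k-1}B_{\mathbb{R}_+^{n+1}}(x,t)\setminus\delta^{-k}B_{\mathbb{R}_+^{n+1}}(x,t)\}_{k\geq 0}$, following the scheme already used in the proof of Lemma~\ref{aujxi}. The critical point is to keep the geometric decay factor $\delta^{k(N-n-1-2\lambda)}$ arising from choosing the decay parameter $N$ large in Lemma~\ref{heat kernel}, rather than immediately replacing by $S_\lambda(b)$: the naive upper bound $\int_0^{\ell(Q)} S_\lambda(b)(x,t)\,dt/t$ has no summable dyadic structure in scale. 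After dyadically decomposing $t\in(0,\ell(Q)]$ and using Lemma~\ref{thm:existence2} to enclose each annulus within a cube of some adjacent system $\mathcal{D}^\iota$, this will produce a pointwise estimate of the shape
\[
|b(x)-(b)_Q|\lesssim \sum_{\iota=1}^\kappa\sum_{k\geq 0}\delta^{kN'}\,T^\iota_{Q,k}(x),
\]
with $N'>0$, where $T^\iota_{Q,k}(x)$ denotes the supremum of $S_\lambda(b)$ over the cube in $\mathcal{D}^\iota$ of sidelength comparable to $\delta^k\ell(Q)$ containing $x$.

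To transfer to $\ell^{p,q}$, Minkowski's inequality applied to $\fint_Q(\cdot)^r\,dm_\lambda$ yields
\[
MO_Q^r(b)\lesssim \sum_{\iota=1}^\kappa\sum_{k\geq 0}\delta^{kN'}\Bigl(\fint_Q T^\iota_{Q,k}(x)^r\,dm_\lambda(x)\Bigr)^{1/r},
\]
and each inner $L^r$-average will be dominated cubewise by the sup of $S_\lambda(b)$ over finitely many related cubes of $\mathcal{D}^\iota$ at scale $\delta^k\ell(Q)$. Taking $\ell^{p,q}$ norms, applying the quasi-triangle inequality in $k$ against the decay $\delta^{kN'}$, and invoking Lemma~\ref{aujxi} at each fixed pair $(\iota,k)$, I would arrive at
\[
\|\{MO_Q^r(b)\}_Q\|_{\ell^{p,q}}\lesssim \sum_{\iota=1}^\kappa\|\{MO_Q(b)\}_{Q\in\mathcal{D}^\iota}\|_{\ell^{p,q}},
\]
which is exactly the content of (1). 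The main technical obstacle is extracting a genuinely summable scale decay for the rough part $b-\tilde b$: this forces one to retain the annular decay $\delta^{kN'}$ in the pointwise kernel estimate before reducing to $S_\lambda(b)$, and to cover the Euclidean-style annuli carefully by dyadic cubes drawn from several adjacent systems $\mathcal{D}^\iota$ (via Lemma~\ref{thm:existence2}) so that every $T^\iota_{Q,k}$ is indeed a function of a single cube, amenable to Lemma~\ref{aujxi}.
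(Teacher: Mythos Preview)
There is a genuine gap in your handling of the rough piece $b-\tilde b$. You correctly diagnose that the naive bound $\int_0^{\ell(Q)}S_\lambda(b)(x,t)\,dt/t$ has no summability in scale, but the repair you propose cannot work: the factor $\delta^{k(N-n-1-2\lambda)}$ coming from the heat-kernel decay is a \emph{spatial} annulus decay at each fixed $t$, not a decay in the time variable. Once you dyadically decompose $t\in(0,\ell(Q)]$ into $t\sim\delta^{m}\ell(Q)$, $m\ge 0$, the sum over $m$ carries no weight, and no re-indexing with the annulus parameter $k$ can manufacture one. Concretely, the pointwise inequality you claim,
\[
|b(x)-(b)_Q|\lesssim \sum_{\iota}\sum_{k\ge 0}\delta^{kN'}T^{\iota}_{Q,k}(x),
\]
is false: take $b$ a smooth bump of height $1$ supported on a subcube $R\subset Q$ with $\ell(R)=\delta^{k_0}\ell(Q)$. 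For $x$ at the centre of $R$ one has $|b(x)-(b)_Q|\sim 1$, whereas $T^{\iota}_{Q,k}(x)$ is of order $1$ only for $k$ near $k_0$ and is $O\big(\delta^{(k_0-k)(n+1)}\big)$ for $k<k_0$, $O(\delta^{k-k_0})$ for $k>k_0$; your right-hand side is then $O\big(\delta^{k_0\min(N',\,n+1)}\big)$, which is arbitrarily small for large $k_0$.

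The paper does not attempt a pointwise estimate with scale decay. Instead it keeps the undamped scale sum $\sum_{j}S_\lambda(b)(x,\delta^{-j})$, inserts a summable weight $(\log_{\delta^{-1}}\ell(Q)-j+1)^{-2}$, passes the $r$-th power inside by Jensen, and then---after averaging over $Q$ and splitting $Q$ into subcubes $R$ at each scale---reads the result as a matrix operator
\[
M_r(a)(Q)=\sum_{R\in\mathcal{D}_Q^\nu}\frac{m_\lambda(R)}{m_\lambda(Q)}\Bigl(\log_{\delta^{-1}}\frac{\ell(Q)}{\ell(R)}+1\Bigr)^{2r-2}|a_R|
\]
acting on $\{a_R\}=\{\sup_{x\in R}S_\lambda(b)(x,\ell(R))^r\}$. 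The summability now comes from the \emph{volume ratio} $m_\lambda(R)/m_\lambda(Q)$, not from any decay in the number of scales, and $\ell^{p/r}$-boundedness of $M_r$ is verified by Schur's test (Lemma~\ref{SchurL}) with weights $b_Q=m_\lambda(Q)^{\epsilon}$. Your route needs an analogous mechanism; as written it does not close.
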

\begin{proof}
It is direct that (1) implies (2). Now we show that (2) implies (1).
To begin with, we have
\begin{align*}
\|\{MO_Q^r(b)\}_{Q\in\mathcal{D}^\nu}\|_{\ell^{p,q}}
&\lesssim \left\|\left\{\left(\fint_Q|b(x)-e^{-\ell(Q)^2 \Delta_\lambda }b(c_Q)|^rdm_\lambda(x)\right)^{1/r}\right\}_{Q\in\mathcal{D}^\nu}\right\|_{\ell^{p,q}}\\
&\lesssim \left\|\left\{\sup\limits_{x\in Q}\left|e^{-\ell(Q)^2 \Delta_\lambda}b(x)-e^{-\ell(Q)^2\Delta_\lambda}b(c_Q)\right|\right\}_{Q\in\mathcal{D}^\nu}\right\|_{\ell^{p,q}}\\
&\hspace{1.0cm}+\left\|\left\{\left(\fint_Q \left(\int_0^{\ell(Q)}\left|\frac{\partial}{\partial t}e^{-t^2 \Delta_\lambda }b(x)\right|dt\right)^rdm_\lambda(x)\right)^{1/r}\right\}_{Q\in\mathcal{D}^\nu}\right\|_{\ell^{p,q}}\\
&=:{\rm I}+{\rm II}.
\end{align*}

For the term ${\rm I}$, we apply the mean value theorem and Lemma \ref{aujxi} to conclude that
\begin{align*}
{\rm I}\lesssim\left\|\left\{\sup\limits_{x\in Q}\ell(Q)|\nabla e^{-\ell(Q)^2 \Delta_\lambda}b(x)|\right\}_{Q\in\mathcal{D}^\nu}\right\|_{\ell^{p,q}}
\lesssim \sum_{\iota=1}^\kappa\|\{MO_Q(b)\}_{Q\in\mathcal{D}^\iota}\|_{\ell^{p,q}}.
\end{align*}

For the term ${\rm II}$, we first apply Jensen's inequality to deduce that
\begin{align}\label{rhs123}
&\left(\fint_Q \left(\int_0^{\ell(Q)}\left|\frac{\partial}{\partial t}e^{-t^2 \Delta_\lambda }b(x)\right|dt\right)^rdm_\lambda(x)\right)^{1/r}\nonumber\\
&\lesssim  \left(\fint_Q \left(\sum_{j=-\infty}^{\log_{\delta^{-1}}\ell(Q)}\sup\limits_{\delta^{-j+1}\leq t\lesssim \delta^{-j}}t|\nabla e^{-t^2 \Delta_\lambda }b(x)(x,\,t)| \right)^rdm_\lambda(x)\right)^{1/r} \nonumber\\
&\lesssim  \left(\fint_Q \left(\sum_{j=-\infty}^{\log_{\delta^{-1}}\ell(Q)}S_\lambda(b)(x,\,\delta^{-j}) \right)^rdm_\lambda(x)\right)^{1/r} \nonumber\\
&=  C\left(\fint_Q \left(\sum_{j=-\infty}^{\log_{\delta^{-1}}\ell(Q)}(\log_{\delta^{-1}}\ell(Q)-j+1)^{-2}\times (\log_{\delta^{-1}}\ell(Q)-j+1)^2S_\lambda(b)(x,\,\delta^{-j}) \right)^rdm_\lambda(x)\right)^{1/r} \nonumber\\
&\lesssim  \left(\fint_Q \sum_{j=-\infty}^{\log_{\delta^{-1}}\ell(Q)}(\log_{\delta^{-1}}\ell(Q)-j+1)^{-2}\times \left( (\log_{\delta^{-1}}\ell(Q)-j+1)^2S_\lambda(b)(x,\,\delta^{-j}) \right)^rdm_\lambda(x)\right)^{1/r} \nonumber\\
&=C\left(\sum_{j=-\infty}^{\log_{\delta^{-1}}\ell(Q)}(\log_{\delta^{-1}}\ell(Q)-j+1)^{2r-2}\fint_QS_\lambda(b)(x,\,\delta^{-j})^rdm_\lambda(x) \right)^{1/r}.
\end{align}
To continue, for each $\nu\in\{1,2,\ldots ,\kappa\}$, $j\in (-\infty,\log_{\delta^{-1}}\ell(Q)]\cap\mathbb{Z}$ and $Q\in\mathcal{D}^\nu$, we let
$$\mathcal{D}_Q^\nu:=\{R\in\mathcal{D}^\nu:\ R\subseteq Q\},$$
$$\mathcal{D}_{Q,j}^\nu:=\{R\in\mathcal{D}_j^\nu:\ R\subseteq Q\}.$$
Then we have
\begin{align}\label{backo}
{\rm RHS}\ {\rm of}\ \eqref{rhs123}
&\lesssim\left(\sum_{j=-\infty}^{\log_{\delta^{-1}}\ell(Q)}\sum_{R\in\mathcal{D}_{Q,-j}^\nu}(\log_{\delta^{-1}}\ell(Q)-j+1)^{2r-2}m_\lambda(Q)^{-1}\int_{R}S_\lambda(b)(x,\,\delta^{-j})^rdm_\lambda(x) \right)^{1/r}\nonumber\\
&\lesssim\left(\sum_{R\in\mathcal{D}_{Q}^\nu}\frac{m_\lambda(R)}{m_\lambda(Q)}\left(\log_{\delta^{-1}}\frac{\ell(Q)}{\ell(R)}+1\right)^{2r-2}\sup\limits_{x\in R}S_\lambda(b)(x,\,\ell(R))^r \right)^{1/r}.
\end{align}
To continue, for any sequence $\{a_Q\}_{Q\in\mathcal{D}^\nu}$, we let
$$M_r(a)(Q)=\sum_{R\in\mathcal{D}_{Q}^\nu}\frac{m_\lambda(R)}{m_\lambda(Q)}\left(\log_{\delta^{-1}}\frac{\ell(Q)}{\ell(R)}+1\right)^{2r-2}|a_R|.$$
We {\bf claim} that $M_r$ is a bounded operator on $\ell^{p,q}$ for any $1<p<\infty$ and $1\leq q\leq \infty$. Before providing its proof, we first illustrate how it implies our desired inequality. we apply the $\ell^{p,q}$ boundedness of $M_r$ and inequality \eqref{backo}, together with Lemma \ref{aujxi}, to conclude that
\begin{align*}
{\rm II}&\leq C \left\|\left\{\left(\sum_{R\in\mathcal{D}_{Q}^\nu}\frac{m_\lambda(R)}{m_\lambda(Q)}\left(\log_{\delta^{-1}}\frac{\ell(Q)}{\ell(R)}+1\right)^{2r-2}\sup\limits_{x\in R}S_\lambda(b)(x,\,\ell(R))^r \right)^{1/r}\right\}_{Q\in\mathcal{D}^\nu}\right\|_{\ell^{p,q}}\\
&= C \left\|\left\{\sum_{R\in\mathcal{D}_{Q}^\nu}\frac{m_\lambda(R)}{m_\lambda(Q)}\left(\log_{\delta^{-1}}\frac{\ell(Q)}{\ell(R)}+1\right)^{2r-2}\sup\limits_{x\in R}S_\lambda(b)(x,\,\ell(R))^r \right\}_{Q\in\mathcal{D}^\nu}\right\|_{\ell^{p/r,q/r}}\\
&\leq C\left\|\left\{S_\lambda(b)(x,\,\ell(Q))^r \right\}_{Q\in\mathcal{D}^\nu}\right\|_{\ell^{p/r,q/r}}\\
&=C\left\|\left\{S_\lambda(b)(x,\,\ell(Q)) \right\}_{Q\in\mathcal{D}^\nu}\right\|_{\ell^{p,q}}\\
&\leq C \sum_{\iota=1}^\kappa\|\{MO_Q(b)\}_{Q\in\mathcal{D}^\iota}\|_{\ell^{p,q}}.
\end{align*}

Now we go back to the proof of the claim. By interpolation, it suffices to show that it is a bounded operator on $\ell^p$ for any $1<p<\infty$. To show this, we let
$$\mathcal{E}_R^\nu:=\{Q\in\mathcal{D}^\nu:\ Q\supseteq R\},$$
and then we will verify that for any $1<p<\infty$,
\begin{align}
&\sum_{R\in\mathcal{D}_{Q}^\nu}\frac{m_\lambda(R)}{m_\lambda(Q)}\left(\log_{\delta^{-1}}\frac{\ell(Q)}{\ell(R)}+1\right)^{2r-2}b_R^{p'}\lesssim b_Q^{p'},\label{ver10}\\
&\sum_{Q\in\mathcal{E}_{R}^\nu}\frac{m_\lambda(R)}{m_\lambda(Q)}\left(\log_{\delta^{-1}}\frac{\ell(Q)}{\ell(R)}+1\right)^{2r-2}b_Q^{p}\lesssim b_R^{p}\label{ver11},
\end{align}
where we choose $b_Q:=m_\lambda(Q)^\epsilon$ for some $\epsilon>0$ small enough. Indeed, to obtain \eqref{ver10}, by changing the order of the sum and then applying inequality \eqref{doub}, we see that
\begin{align}\label{intoa}
{\rm LHS}\ {\rm of}\ \eqref{ver10}&=m_\lambda(Q)^{\epsilon p'}\sum_{R\in\mathcal{D}_{Q}^\nu}\left(\frac{m_\lambda(R)}{m_\lambda(Q)}\right)^{1+\epsilon p'}\left(\log_{\delta^{-1}}\frac{\ell(Q)}{\ell(R)}+1\right)^{2r-2}\nonumber\\
&\leq C m_\lambda(Q)^{\epsilon p'}\sum_{R\in\mathcal{D}_{Q}^\nu}\left(\frac{m_\lambda(R)}{m_\lambda(Q)}\right)^{1+\epsilon p'/2}\nonumber\\
&= C m_\lambda(Q)^{\epsilon p'}\sum_{j:\delta^j\leq \ell(Q)}\sum_{R\in\mathcal{D}_{Q,j}^\nu}\left(\frac{m_\lambda(R)}{m_\lambda(Q)}\right)^{1+\epsilon p'/2},
\end{align}
where the constant $C$ depends only on $r$ and $p$. Since we are in the Bessel setting, in which the upper dimension is not equal to the lower one, one need to deal with the volumn term more delicately. To this end, we apply inequality \eqref{doub} to see that
\begin{align*}
\sum_{j:\delta^j\leq \ell(Q)}\sum_{R\in\mathcal{D}_{Q,j}^\nu}\left(\frac{m_\lambda(R)}{m_\lambda(Q)}\right)^{1+\epsilon p'/2}
&\leq C \sum_{j:\delta^j\leq \ell(Q)}\left(\frac{\delta^j}{\ell(Q)}\right)^{(n+1)\epsilon p'/2}\sum_{R\in\mathcal{D}_{Q,j}^\nu}\left(\frac{m_\lambda(R)}{m_\lambda(Q)}\right)\\
&=C\sum_{j:\delta^j\leq \ell(Q)}\left(\frac{\delta^j}{\ell(Q)}\right)^{(n+1)\epsilon p'/2}\left(\frac{m_\lambda(\bigcup_{R\in\mathcal{D}_{Q,j}^\nu}R)}{m_\lambda(Q)}\right)\\
&\leq C.
\end{align*}
Substituting the above inequality into \eqref{intoa}, we deduce that the right-hand side of \eqref{intoa} is dominated by $m_\lambda(Q)^{\epsilon p'}$. This verifies \eqref{ver10}. Next, we verify \eqref{ver11}. By inequality \eqref{doub},
\begin{align*}
{\rm LHS}\ {\rm of}\ \eqref{ver11}
&\lesssim m_\lambda(R)^{\epsilon p}\sum_{j:\delta^j\geq \ell(R)}\left(\frac{\ell(R)}{\delta^j}\right)^{(n+1)(1-\epsilon p)}\left(\log_{\delta^{-1}}\frac{\delta^j}{\ell(R)}+1\right)^{2r-2}\\
&\lesssim m_\lambda(R)^{\epsilon p},
\end{align*}
where in the first inequality we used the fact again that for any $R\in\mathcal{D}^\nu$ and $j\in\mathbb{Z}$ satisfying $\delta^j\geq \ell(R)$, there exists a unique $Q\in\mathcal{D}_j^\nu$ such that $Q\supseteq R$. Combining \eqref{ver10}, \eqref{ver11} with Lemma \ref{SchurL}, we see that $M_r$ is a bounded operator on $\ell^p$.
This completes the proof of Lemma \ref{technic}.
\end{proof}

\section{Proof of Theorem \ref{mainLor}:\ upper bound}\label{Sec4}
\setcounter{equation}{0}
This section is devoted to establishing the upper bound for the Schatten--Lorentz norm of the Riesz transform commutator. To begin with, we set $\Xi_+:=\{(x,\,y)\in\mathbb{R}_+^{n+1}\times \mathbb{R}_+^{n+1}:\ x=y\}$, then we have the following modified Whitney decomposition Lemma.
\begin{lemma}\label{Whitney}
There exists a family of closed standard dyadic cubes $\mathcal{P}=\{P_j\}_j$ on $(\mathbb{R}_+^{n+1}\times \mathbb{R}_+^{n+1})\backslash \Xi_+$ such that
\begin{enumerate}
  \item $\cup_jP_j=(\mathbb{R}_+^{n+1}\times \mathbb{R}_+^{n+1})\backslash \Xi_+$ {\rm(disjoint\ union)};
  \item $\sqrt{n}\ell(P_j)\leq {\rm dist}(P_j,\,\Xi_+)\leq 4\sqrt{n}\ell(P_j)$;
  \item If the boundaries of two cubes $P_j$ and $P_k$ touch, then $$\frac{1}{4}\leq \frac{\ell(P_j)}{\ell(P_k)}\leq 4;$$
  \item For a given $P_j$, there are at most $12^n-4^n$ cubes $P_k$ that touch it;
  \item Let $0<\epsilon<1/4$ and $P_j^*$ be the cube with the same center as $P_j$ and with the sidelength $(1+\epsilon)\ell(P_j)$. Then each point of $(\mathbb{R}_+^{n+1}\times \mathbb{R}_+^{n+1})\backslash \Xi_+$ is contained in at most $c_n$ of the cubes $P_j^*$.
      \end{enumerate}
\begin{proof}
Since the boundary of $(\mathbb{R}_+^{n+1}\times \mathbb{R}_+^{n+1})\backslash \Xi_+$ is $\Xi_+\cup \partial (\mathbb{R}_+^{n+1}\times \mathbb{R}_+^{n+1})$, one may not apply the standard dyadic Whitney decomposition on $(\mathbb{R}_+^{n+1}\times \mathbb{R}_+^{n+1})\backslash \Xi_+$ directly to get the desired result. Instead, we apply the standard dyadic Whitney decomposition (see e.g. \cite[Appendix J.1]{MR3243734}) on the region $(\mathbb{R}^{n+1}\times \mathbb{R}^{n+1})\backslash \Xi$, where $\Xi=\{(x,\,y)\in\mathbb{R}^{n+1}\times \mathbb{R}^{n+1}:\ x=y\}$,  to see that there exists a family of closed standard dyadic cubes $\mathcal{P}'=\{P_j'\}_j$ on $(\mathbb{R}^{n+1}\times \mathbb{R}^{n+1})\backslash \Xi$ such that (1)--(5) hold with $P_j$ and $(\mathbb{R}_+^{n+1}\times \mathbb{R}_+^{n+1})\backslash \Xi_+$ replaced by $P_j'$ and $(\mathbb{R}^{n+1}\times \mathbb{R}^{n+1})\backslash \Xi$, respectively. Since each $P_j'$ is contained in or disjoint with $(\mathbb{R}_+^{n+1}\times \mathbb{R}_+^{n+1})\backslash \Xi_+$, this allows us to choose $\mathcal{P}=\{P_j\}_j$ be a sub-family of $\mathcal{P}'$ such that $$\cup_jP_j= (\mathbb{R}_+^{n+1}\times \mathbb{R}_+^{n+1})\backslash \Xi_+.$$
For these $\{P_j\}_j$, we see that (2) holds since
$${\rm dist}(P_j,\, \Xi_+)\approx {\rm dist}(P_j,\,\Xi).$$
Moreover,  (3)--(5) hold directly due to the construction of $\{P_j'\}_j$. This ends the proof of Lemma \ref{Whitney}.
\end{proof}

\end{lemma}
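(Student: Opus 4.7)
The natural strategy is to avoid performing the Whitney decomposition directly on $(\mathbb{R}_+^{n+1}\times \mathbb{R}_+^{n+1})\setminus \Xi_+$, whose boundary includes both the diagonal and the outer edges $\{x_{n+1}=0\}\cup\{y_{n+1}=0\}$. A direct application of the classical Whitney decomposition would force the cubes to shrink near those coordinate hyperplanes as well, which is exactly what one wants to avoid — properties (2) and (3) in the statement demand that the size of each cube be controlled \emph{only} by distance to the diagonal. So the plan is to decompose the larger open set $(\mathbb{R}^{n+1}\times\mathbb{R}^{n+1})\setminus\Xi$ first, and then throw away the cubes that fall outside the quadrant $\mathbb{R}_+^{n+1}\times\mathbb{R}_+^{n+1}$.

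Concretely, I would invoke the standard dyadic Whitney decomposition (e.g. \cite[Appendix J.1]{MR3243734}) to produce a family $\mathcal{P}'=\{P_j'\}$ of closed dyadic cubes in $\mathbb{R}^{2(n+1)}$ satisfying the five listed properties with $\Xi_+$ replaced by $\Xi$. The crucial structural observation is that the hyperplanes $\{x_{n+1}=0\}$ and $\{y_{n+1}=0\}$ are coordinate hyperplanes that align with the standard dyadic grid, so every cube $P_j'$ either lies entirely in $\mathbb{R}_+^{n+1}\times\mathbb{R}_+^{n+1}$ or has interior disjoint from it. I would then define $\mathcal{P}:=\{P_j'\in\mathcal{P}': P_j'\subset \mathbb{R}_+^{n+1}\times\mathbb{R}_+^{n+1}\}$. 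Property (1) follows by partitioning the ambient decomposition into the two groups and observing that the union of cubes in $\mathcal{P}$ must fill $(\mathbb{R}_+^{n+1}\times\mathbb{R}_+^{n+1})\setminus\Xi_+$ (any point in this set is in some $P_j'$, and that $P_j'$ cannot straddle the coordinate hyperplanes).

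Properties (3)--(5) are inherited verbatim from $\mathcal{P}'$, since they are intrinsic to the geometry of the dyadic cubes and do not reference $\Xi_+$. For (2), I would argue that for any $(x,y)\in P_j\subset \mathbb{R}_+^{n+1}\times\mathbb{R}_+^{n+1}$, the closest point of $\Xi$ to $(x,y)$ is $\bigl(\tfrac{x+y}{2},\tfrac{x+y}{2}\bigr)$; convexity of $\mathbb{R}_+^{n+1}$ guarantees that this point actually lies in $\Xi_+$, so $\operatorname{dist}((x,y),\Xi_+)=\operatorname{dist}((x,y),\Xi)$. Taking the infimum over $(x,y)\in P_j$ gives $\operatorname{dist}(P_j,\Xi_+)=\operatorname{dist}(P_j,\Xi)$, and the required bounds for (2) are transferred from the corresponding bounds for $\mathcal{P}'$.

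The hardest conceptual point is really just justifying the passage from $\mathcal{P}'$ to $\mathcal{P}$ cleanly — one must be sure that no Whitney cube of the ambient decomposition is accidentally bisected by the boundary of the quadrant, which would leave gaps in coverage. The dyadic alignment with the coordinate hyperplanes handles this, but it is the step one should be most careful about writing down. Everything else amounts to inheriting properties from the classical Whitney theorem and using convexity of the half-space to identify $\Xi_+$ with the relevant portion of $\Xi$.
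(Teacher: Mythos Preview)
Your proposal is correct and follows essentially the same route as the paper: decompose the ambient set $(\mathbb{R}^{n+1}\times\mathbb{R}^{n+1})\setminus\Xi$ via the standard dyadic Whitney lemma, then restrict to those cubes lying in the positive quadrant using the alignment of dyadic cubes with the coordinate hyperplanes. Your convexity argument for property~(2) actually yields the equality $\operatorname{dist}(P_j,\Xi_+)=\operatorname{dist}(P_j,\Xi)$, which is slightly sharper than the paper's stated $\approx$, but the overall strategy and structure are identical.
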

Note that each $P\in\mathcal{P}$ can be written as $P=Q_P\times \hat{Q}_P$ for some $Q_P$ and $\hat{Q}_P$ in $\mathcal{D}^0$. This, combination with (2) in Lemma \ref{Whitney}, yields $${\rm dist}(Q_P,\,\hat{Q}_P)\approx \ell(P).$$
Now for any $P\in\mathcal{P}$, define
$$U_{P}(x,\,y):=K_{\lambda,\ell}(x,\,y)\chi_{Q_P}(x)\chi_{\hat{Q}_P}(y).$$
Here we omit the dependence on $\lambda$ and $\ell$ since these two parameters play no role in the proof below.

The following Lemma states that Bessel--Riesz transform kernel can be decomposed  locally into a summation of NWO sequences with a separate variable form and a suitable decay factor.
\begin{lemma}\label{jqss}
There is a finite index set $\mathcal{I}$ such that
for any $N\in\mathbb{Z}_+$, $\ell\in\{1,2,...,n+1\}$ and $P\in\mathcal{P}$, one can construct  functions $F_{P,k,v}$ , $G_{P,k,v}$ and a sequence $\{B_{P,k,v}\}$  of real numbers  satisfying for any $P\in\mathcal{P}$, $k\in\mathbb{Z}^4$ and $v\in\mathcal{I}$,
\begin{enumerate}
  \item ${\rm supp}(F_{P,k,v})\subset Q_P$, ${\rm supp}(G_{P,k,v})\subset \hat{Q}_P$,
  \item $\|F_{P,k,v}\|_{\infty}\leq m_\lambda(Q_P)^{-1/2}$, $\|G_{P,k,v}\|_{\infty}\leq m_\lambda(Q_P)^{-1/2}$,
  \item $|B_{P,k,v}|\leq C_N(1+|k|)^{-N}$ for some $C_N>0$ independent of  $P$ and $k$,
\item $U_{P}$ admits the following factorization
\begin{align}\label{reform}
U_{P}(x,\,y)=\sum_{v\in\mathcal{I}}\sum_{k\in\mathbb{Z}^4}B_{P,k,v}F_{P,k,v}(x)G_{P,k,v}(y).
\end{align}
\end{enumerate}
\end{lemma}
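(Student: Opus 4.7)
The plan is to rescale each Whitney piece $P$ to a pair of unit cubes, exploit the derivative estimates from Lemma \ref{CZO} to conclude that the rescaled kernel is smooth with bounds uniform in $P$, and then expand it as a tensor trigonometric series whose coefficients inherit the required rapid decay from smoothness.

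First, set $r:=\ell(P)$ and let $c_{Q_P}, c_{\hat Q_P}$ be the centers of $Q_P$ and $\hat Q_P$. Introduce the change of variables $x = c_{Q_P} + ru$, $y = c_{\hat Q_P} + rv$ for $u,v \in [-\tfrac12,\tfrac12]^{n+1}$. By Lemma \ref{Whitney}(2), $|x-y|\sim r$ on $P$, and combining Lemma \ref{CZO} with the doubling inequality \eqref{doub} yields $m_\lambda(B_{\mathbb R_+^{n+1}}(x,|x-y|))\sim m_\lambda(Q_P)\sim m_\lambda(\hat Q_P)$ uniformly in $P$. Hence the rescaled kernel
\begin{align*}
\widetilde K_P(u,v) := m_\lambda(Q_P)\, K_{\lambda,\ell}(c_{Q_P}+ru,\, c_{\hat Q_P}+rv)
\end{align*}
is $C^\infty$ on the unit double cube with $|\partial_u^\alpha\partial_v^\beta \widetilde K_P| \le C_{\alpha,\beta}$ uniformly in $P$, since the factor $r^{|\alpha|+|\beta|}$ from the chain rule exactly cancels the $|x-y|^{|\alpha|+|\beta|}$ in \eqref{czooo}.

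Next I would fix a smooth cutoff $\eta \in C_c^\infty(\mathbb R^{n+1})$ with $\eta \equiv 1$ on $[-\tfrac12,\tfrac12]^{n+1}$ and $\supp\eta \subset (-1,1)^{n+1}$, multiply $\widetilde K_P$ by $\eta(u)\eta(v)$, and extend periodically in each of the $2(n+1)$ coordinates on a fixed torus. Expanding the resulting periodic function in Fourier series, and integrating by parts using the uniform derivative bounds of the previous step, one obtains
\begin{align*}
\eta(u)\eta(v)\,\widetilde K_P(u,v) = \sum_{(k_1,k_2)\in \mathbb Z^{n+1}\times\mathbb Z^{n+1}} \beta_{P,k_1,k_2}\, e^{i\theta k_1\cdot u}\, e^{i\theta k_2\cdot v},
\end{align*}
with $|\beta_{P,k_1,k_2}| \le C_N(1+|k_1|+|k_2|)^{-N}$ for every $N$, uniformly in $P$. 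Restricting $(u,v)$ back to where $\eta=1$ and undoing the rescaling yields, on $Q_P\times\hat Q_P$,
\begin{align*}
K_{\lambda,\ell}(x,y) = \frac{1}{m_\lambda(Q_P)}\sum_{k_1,k_2}\beta_{P,k_1,k_2}\, e^{i\theta k_1\cdot(x-c_{Q_P})/r}\, e^{i\theta k_2\cdot(y-c_{\hat Q_P})/r}.
\end{align*}

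Finally I would split each complex exponential into its real and imaginary parts, producing a finite index set $\mathcal I$ (one index per sine/cosine choice in each block), and distribute the mass $m_\lambda(Q_P)^{-1}$ symmetrically as $m_\lambda(Q_P)^{-1/2}m_\lambda(\hat Q_P)^{-1/2}$, which is legitimate by $m_\lambda(Q_P)\sim m_\lambda(\hat Q_P)$. Setting $F_{P,k,v}, G_{P,k,v}$ to be the resulting normalized trigonometric factors restricted to $Q_P, \hat Q_P$ respectively, and $B_{P,k,v}$ the corresponding Fourier coefficient, gives properties (1)--(3) and the factorization \eqref{reform} at once. The main technical point is keeping every estimate uniform in the Whitney piece $P$; this rests on the scale-invariant nature of \eqref{czooo} relative to $r=\ell(P)$ together with the Bessel-setting doubling \eqref{doub}. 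An entirely parallel argument using the weighted Alpert wavelets of sufficiently high vanishing-moment order $\mathcal K$ from Lemma \ref{AW}, in place of the Fourier series, would also work, with the moment vanishing generating the decay in place of integration by parts.
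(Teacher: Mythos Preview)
Your approach is correct and genuinely different from the paper's. You rescale each Whitney box to a unit double cube, invoke Lemma~\ref{CZO} to get uniform $C^\infty$ bounds on the normalized kernel, periodize via a smooth cutoff, and then read off rapid Fourier coefficient decay by integration by parts. The paper instead expands $U_P$ in the weighted Alpert wavelets of Lemma~\ref{AW} with high vanishing-moment order $\mathcal K$: it splits the double wavelet sum into four blocks according to whether each scale is finer or coarser than $\ell(P)$, and obtains decay in the fine scales from the vanishing moments combined with Taylor expansion of $K_{\lambda,\ell}$, and in the coarse scales from the measure ratio via reverse doubling~\eqref{doub}. Your route is more elementary and self-contained; the paper's route is intrinsic to the dyadic--weighted structure and reuses the same machinery verbatim for Lemma~\ref{jq}, where the kernel is replaced by $K_{\lambda,\ell}^{-1}$ (your Fourier argument would handle that case equally well, since Lemma~\ref{sign} gives the lower bound needed for uniform smoothness of the reciprocal).

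Two cosmetic points. First, your natural index set is $\mathbb Z^{2(n+1)}$ rather than $\mathbb Z^4$; since the only use of the lemma is through the summability $\sum_k (1+|k|)^{-N}<\infty$, this discrepancy is harmless for the applications, but you should remark on it or reindex. Second, when you write ``distribute the mass $m_\lambda(Q_P)^{-1}$ symmetrically as $m_\lambda(Q_P)^{-1/2}m_\lambda(\hat Q_P)^{-1/2}$'', the comparability constant between $m_\lambda(Q_P)$ and $m_\lambda(\hat Q_P)$ should be absorbed into $B_{P,k,v}$ to keep the $L^\infty$ bounds in (2) exact rather than up to a constant; this is routine.
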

\begin{proof}

To begin with, we let $\{h_{I,\mathcal{K}}^\epsilon\}_{I\in\mathcal{D}^0,\epsilon\in\Gamma_{n,\mathcal{K}}}$ be the Alpert wavelet given in Lemma \ref{AW} with $\mathcal{D}:=\mathcal{D}^0$ and $\mathcal{K}$ being a sufficiently large integer. Then we define
\begin{align}
&D_{k}^{(1)}(f)(x,\,y)=\sum_{I\in\mathcal{D}^0_{k-1}}\sum_{\epsilon\in\Gamma_{n,\mathcal{K}}}\langle f(\cdot,\,y), h_{I,\mathcal{K}}^\epsilon\rangle h_{I,\mathcal{K}}^\epsilon(x),\label{dkdef}\\
&D_{k}^{(2)}(f)(x,\,y)=\sum_{I\in\mathcal{D}^0_{k-1}}\sum_{\epsilon\in\Gamma_{n,\mathcal{K}}}\langle f(x,\,\cdot), h_{I,\mathcal{K}}^\epsilon\rangle h_{I,\mathcal{K}}^\epsilon(y).\label{dkdef2}
\end{align}
In what follows, we omit the indexs $\mathcal{K}$ in $h_{I,\mathcal{K}}^\epsilon$ and $\Gamma_{n,\mathcal{K}}$ for simplicity.  By Lemma \ref{AW}, $U_P$ can be decomposed as follows.
\begin{align}
&U_{P}(x,\,y)\nonumber\\
&=\left(\sum_{k_1=k_P+1}^{+\infty}D_{k_1}^{(1)}+\sum_{k_1=-\infty}^{k_P}D_{k_1}^{(1)}\right)\left(\sum_{k_2=k_P+1}^{+\infty}D_{k_2}^{(2)}+\sum_{k_2=-\infty}^{k_P}D_{k_2}^{(2)}\right)(U_P)(x,\,y)\chi_{Q_P}(x)\chi_{\hat{Q}_P}(y)\nonumber\\
&=\left(\sum_{k_1=k_P+1}^{+\infty}\sum_{k_2=k_P+1}^{+\infty}+\sum_{k_1=-\infty}^{k_P}\sum_{k_2=k_P+1}^{+\infty}+\sum_{k_1=k_P+1}^{+\infty}\sum_{k_2=-\infty}^{k_P}+\sum_{k_1=-\infty}^{k_P}\sum_{k_2=-\infty}^{k_P}\right)D_{k_1}^{(1)}D_{k_2}^{(2)}(U_P)(x,\,y)\chi_{Q_P}(x)\chi_{\hat{Q}_P}(y)\nonumber\\
&=\left(\sum_{k_1=k_P+1}^{+\infty}\sum_{k_2=k_P+1}^{+\infty}+\sum_{k_1=-\infty}^{k_P}\sum_{k_2=k_P+1}^{+\infty}+\sum_{k_1=k_P+1}^{+\infty}\sum_{k_2=-\infty}^{k_P}+\sum_{k_1=-\infty}^{k_P}\sum_{k_2=-\infty}^{k_P}\right)\sum_{I_1\in\mathcal{D}^0_{k_1-1}}\sum_{I_2\in\mathcal{D}^0_{k_2-1}}\sum_{\epsilon_1\in\Gamma_n}\sum_{\epsilon_2\in\Gamma_n}\nonumber\\ \nonumber\\
&\hspace{3.7cm}\int_{I_1\cap Q_P}\int_{I_2\cap \hat{Q}_P}K_{\lambda,\ell}(z,\,w)h_{I_1}^{\epsilon_1}(z)h_{I_2}^{\epsilon_2}(w)dm_\lambda(w)dm_\lambda(z)h_{I_1}^{\epsilon_1}(x)h_{I_2}^{\epsilon_2}(y)\nonumber\\
&=:\sum_{u=1}^4{\rm I}_u,
\end{align}
where we choose $k_P$ to be the integer such that $2^{-k_P}\leq \ell(P)<2^{-k_P+1}$. Next we deal with these four terms separately.

For the term ${\rm I}_1$,  we note that for any integer $k_1\geq k_P+1 $, there are $2^{(k_1-k_P)(n+1)}$ cubes $I\in\mathcal{D}^0_{k_1-1}$ such that $I\subset Q_P$. We denote these cubes by $I_{1,k_1}^{1},\ldots,I_{1,k_1}^{2^{(k_1-k_P)(n+1)}}$. Moreover, for any integer $k_2\geq k_P+1 $, there are $2^{(k_2-k_P)(n+1)}$ cubes  $I\in\mathcal{D}^0_{k_2-1}$ such that $I\subset\hat{Q}_P$. We denote these cubes by $I_{2,k_2}^{1},\ldots,I_{2,k_2}^{2^{(k_2-k_P)(n+1)}}$. Then
\begin{align*}
{\rm I}_1
&=\sum_{k_1=k_P+1}^{+\infty}\sum_{k_2=k_P+1}^{+\infty}\sum_{j_1=1}^{2^{(k_1-k_P)(n+1)}}\sum_{j_2=1}^{2^{(k_2-k_P)(n+1)}}\sum_{\epsilon_1\in\Gamma_n}\sum_{\epsilon_2\in\Gamma_n}\\
&\hspace{1.0cm}\int_{I_{1,k_1}^{j_1}}\int_{I_{2,k_2}^{j_2}}K_{\lambda,\ell}(z,\,w)h_{I_{1,k_1}^{j_1}}^{\epsilon_1}(z)h_{I_{2,k_2}^{j_2}}^{\epsilon_2}(w)dm_\lambda(w)dm_\lambda(z)h_{I_{1,k_1}^{j_1}}^{\epsilon_1}(x)h_{I_{2,k_2}^{j_2}}^{\epsilon_2}(y).
\end{align*}
For any integer $k_1\geq k_P+1 $ and any integer $k_2\geq k_P+1$, we let $$F_{P,k_P-k_1,k_P-k_2}^{\epsilon_1,\epsilon_2,j_1,j_2}(x):=\left(\frac{m_\lambda(I_{1,k_1}^{j_1})}{m_\lambda(Q_P)}\right)^{1/2}h_{I_{1,k_1}^{j_1}}^{\epsilon_1}(x),\ \ G_{P,k_P-k_1,k_P-k_2}^{\epsilon_1,\epsilon_2,j_1,j_2}(y):=\left(\frac{m_\lambda(I_{2,k_2}^{j_2})}{m_\lambda(Q_P)}\right)^{1/2}h_{I_{2,k_2}^{j_2}}^{\epsilon_2}(y)$$ and
\begin{align*}
B_{P,k_P-k_1,k_P-k_2}^{\epsilon_1,\epsilon_2,j_1,j_2}:&=\left(\frac{m_\lambda(Q_P)}{m_{\lambda}(I_{1,k_1}^{j_1})}\right)^{1/2}\left(\frac{m_\lambda(Q_P)}{m_\lambda(I_{2,k_2}^{j_2})}\right)^{1/2}\int_{I_{1,k_1}^{j_1}}\int_{I_{2,k_2}^{j_2}}K_{\lambda,\ell}(z,\,w)h_{I_{1,k_1}^{j_1}}^{\epsilon_1}(z)h_{I_{2,k_2}^{j_2}}^{\epsilon_2}(w)dm_\lambda(w)dm_\lambda(z).
\end{align*}
Then, it is direct to see that $\supp F_{P,k_P-k_1,k_P-k_2}^{\epsilon_1,\epsilon_2,j_1,j_2}\subset Q_P$, $\supp G_{P,k_P-k_1,k_P-k_2}^{\epsilon_1,\epsilon_2,j_1,j_2}\subset \hat{Q}_P$ and $\|F_{P,k_P-k_1,k_P-k_2}^{\epsilon_1,\epsilon_2,j_1,j_2}\|_{\infty}+\|G_{P,k_P-k_1,k_P-k_2}^{\epsilon_1,\epsilon_2,j_1,j_2}\|_{\infty}\leq m_\lambda(Q_P)^{-1/2}$. Moreover, we apply Taylor's formula twice to deduce that there are points $\xi_{z,j_1,k_1}$ and $\eta_{w,k_2,j_2}$ lying in the line segments jointing $z$ with $c_{I_{1,k_1}^{j_1}}$ and jointing $w$ with $c_{I_{2,k_2}^{j_2}}$, respectively, such that
\begin{align*}
&K_{\lambda,\ell}(z,\,w)\\
&=\sum_{|\beta|\leq \mathcal{K}-1}\frac{\partial_z^{\beta}K_{\lambda,\ell}(c_{I_{1,k_1}^{j_1}},\,w)}{\beta!}(z-c_{I_{1,k_1}^{j_1}})^{\beta}+\sum_{|\beta|=\mathcal{K}}\sum_{|\gamma|\leq \mathcal{K}-1}\frac{\partial_z^\beta\partial_w^{\gamma}K_{\lambda,\ell}(\xi_{z,j_1,k_1},\,c_{I_{2,k_2}^{j_2}})}{\beta!\gamma!}(z-c_{I_{1,k_1}^{j_1}})^{\beta}(w-c_{I_{2,k_2}^{j_2}})^\gamma\\
&\hspace{1.0cm}+\sum_{|\beta|=\mathcal{K} }\sum_{|\gamma|=\mathcal{K} }\frac{\partial_z^\beta \partial_w^\gamma K_{\lambda,\ell}(\xi_{z,j_1,k_1},\,\eta_{w,j_2,k_2})}{\beta!\gamma!}(z-c_{I_{1,k_1}^{j_1}})^{\beta}(w-c_{I_{2,k_2}^{j_2}})^{\gamma}\\
&=:\mathcal{P}_{\lambda,\ell,k_1,k_2,j_1,j_2}^{\beta,\gamma,\mathcal{K}}(z,\,w)+\sum_{|\beta|=\mathcal{K} }\sum_{|\gamma|=\mathcal{K} }\frac{\partial_z^\beta \partial_w^\gamma K_{\lambda,\ell}(\xi_{z,j_1,k_1},\,\eta_{w,j_2,k_2})}{\beta!\gamma!}(z-c_{I_{1,k_1}^{j_1}})^{\beta}(w-c_{I_{2,k_2}^{j_2}})^{\gamma}.
\end{align*}
Next, we apply the cancellation property of Alpert wavelet given in Lemma \ref{AW}  and Lemma \ref{CZO} to deduce that
\begin{align*}
&\bigg|\int_{I_{1,k_1}^{j_1}}\int_{I_{2,k_2}^{j_2}}K_{\lambda,\ell}(z,\,w)h_{I_{1,k_1}^{j_1}}^{\epsilon_1}(z)h_{I_{2,k_2}^{j_2}}^{\epsilon_2}(w)dm_\lambda(w)dm_\lambda(z)\bigg|\\
&=\bigg|\int_{I_{1,k_1}^{j_1}}\int_{I_{2,k_2}^{j_2}}\big(K_{\lambda,\ell}(z,\,w)-\mathcal{P}_{\lambda,\ell,k_1,k_2,j_1,j_2}^{\beta,\gamma,\mathcal{K}}(z,\,w)\big) h_{I_{1,k_1}^{j_1}}^{\epsilon_1}(z)h_{I_{2,k_2}^{j_2}}^{\epsilon_2}(w)dm_\lambda(w)dm_\lambda(z)\bigg|\\
&=\bigg|\int_{I_{1,k_1}^{j_1}}\int_{I_{2,k_2}^{j_2}} \sum_{|\beta|=\mathcal{K} }\sum_{|\gamma|=\mathcal{K} }\frac{\partial_z^\beta \partial_w^\gamma K_{\lambda,\ell}(\xi_{z,j_1,k_1},\,\eta_{w,j_2,k_2})}{\beta!\gamma!}(z-c_{I_{1,k_1}^{j_1}})^{\beta}(w-c_{I_{2,k_2}^{j_2}})^{\gamma} h_{I_{1,k_1}^{j_1}}^{\epsilon_1}(z)h_{I_{2,k_2}^{j_2}}^{\epsilon_2}(w)dm_\lambda(w)dm_\lambda(z)\bigg|\\
&\lesssim 2^{-(k_1-k_P)\mathcal{K}}2^{-(k_2-k_P)\mathcal{K}}\left(\frac{m_\lambda(I_{1,k_1}^{j_1})}{m_\lambda(Q_P)}\right)^{1/2}\left(\frac{m_\lambda(I_{2,k_2}^{j_2})}{m_\lambda(Q_P)}\right)^{1/2}.
\end{align*}
That is,
\begin{align*}
\Big|B_{P,k_P-k_1,k_P-k_2}^{\epsilon_1,\epsilon_2,j_1,j_2}\Big|\lesssim 2^{-(k_1-k_P)\mathcal{K}}2^{-(k_2-k_P)\mathcal{K}}.
\end{align*}
Furthermore, a change of variable yields that $${\rm I}_1=\sum_{k_1\leq -1}\sum_{k_2\leq -1}\sum_{j_1=1}^{2^{-k_1(n+1)}}\sum_{j_2=1}^{2^{-k_2(n+1)}}\sum_{\epsilon_1\in\Gamma_n}\sum_{\epsilon_2\in\Gamma_n}B_{P,k_1,k_2}^{\epsilon_1,\epsilon_2,j_1,j_2}F_{P,k_1,k_2}^{\epsilon_1,\epsilon_2,j_1,j_2}(x)G_{P,k_1,k_2}^{\epsilon_1,\epsilon_2,j_1,j_2}(y).$$

For the term ${\rm I}_2$, we note that for any integer $k\leq k_P$, there is a unique $I\in\mathcal{D}^0_{k-1}$ such that $I\cap Q_P\neq\emptyset$. We denote this cube by $P_{k-1}$. Then
\begin{align*}
{\rm I}_2
&=\sum_{k_1=-\infty}^{k_P}\sum_{k_2=k_P+1}^{+\infty}\sum_{j_2=1}^{2^{(k_2-k_P)(n+1)}}\sum_{\epsilon_1\in\Gamma_n}\sum_{\epsilon_2\in\Gamma_n}\\
&\hspace{1.0cm}\int_{Q_P}\int_{I_{2,k_2}^{j_2}}K_{\lambda,\ell}(z,\,w)h_{P_{k_1-1}}^{\epsilon_1}(z)h_{I_{2,k_2}^{j_2}}^{\epsilon_2}(w)dm_\lambda(w)dm_\lambda(z)h_{P_{k_1-1}}^{\epsilon_1}(x)\chi_{Q_P}(x)h_{I_{2,k_2}^{j_2}}^{\epsilon_2}(y).
\end{align*}
For any integer $k_1\leq k_{P} $ and any integer $k_2\geq k_P+1$, we let $$F_{P,k_P-k_1,k_P-k_2}^{\epsilon_1,\epsilon_2,0,j_2}(x):=\left(\frac{m_{\lambda}(P_{k_1-1})}{m_\lambda(Q_P)}\right)^{1/2}h_{P_{k_1-1}}^{\epsilon_1}(x)\chi_{Q_P}(x),\ \  G_{P,k_P-k_1,k_P-k_2}^{\epsilon_1,\epsilon_2,0,j_2}(y):=\left(\frac{m_\lambda(I_{2,k_2}^{j_2})}{m_\lambda(Q_P)}\right)^{1/2}h_{I_{2,k_2}^{j_2}}^{\epsilon_2}(y)$$ and $$B_{P,k_P-k_1,k_P-k_2}^{\epsilon_1,\epsilon_2,0,j_2}:=\left(\frac{m_\lambda(Q_P)}{m_{\lambda}(P_{k_1-1})}\right)^{1/2}\left(\frac{m_\lambda(Q_P)}{m_\lambda(I_{2,k_2}^{j_2})}\right)^{1/2}\int_{Q_P}\int_{I_{2,k_2}^{j_2}}K_{\lambda,\ell}(z,\,w)h_{P_{k_1-1}}^{\epsilon_1}(z)h_{I_{2,k_2}^{j_2}}^{\epsilon_2}(w)dm_\lambda(w)dm_\lambda(z).$$
Then, it is direct to see that $\supp F_{P,k_P-k_1,k_P-k_2}^{\epsilon_1,\epsilon_2,0,j_2}\subset Q_P$, $\supp G_{P,k_P-k_1,k_P-k_2}^{\epsilon_1,\epsilon_2,0,j_2}\subset \hat{Q}_P$ and $\|F_{P,k_P-k_1,k_P-k_2}^{\epsilon_1,\epsilon_2,0,j_2}\|_{\infty}+\|G_{P,k_P-k_1,k_P-k_2}^{\epsilon_1,\epsilon_2,0,j_2}\|_{\infty}\leq m_\lambda(Q_P)^{-1/2}$. Moreover, we apply Taylor's formula with respect to the $w$-variable at $c_{I_{2,k_2}^{j_2}}$ to deduce that there is a point $\eta_{w,k_2,j_2}$ lying in the line segment jointing $w$ with $c_{I_{2,k_2}^{j_2}}$ such that
$$K_{\lambda,\ell}(z,\,w)=\sum_{|\gamma|\leq \mathcal{K}-1}\frac{\partial_w^\gamma K_{\lambda,\ell}(z,\,c_{I_{2,k_2}^{j_2}})}{\gamma!}(w-c_{I_{2,k_2}^{j_2}})^\gamma+\sum_{|\gamma|= \mathcal{K}}\frac{\partial_w^\gamma K_{\lambda,\ell}(z,\,\eta_{w,k_2,j_2})}{\gamma!}(w-c_{I_{2,k_2}^{j_2}})^\gamma.$$
Then we  apply the cancellation property of Alpert wavelet given in Lemma \ref{AW} and Lemma \ref{CZO} to deduce that
\begin{align*}
&\left|\int_{Q_P}\int_{I_{2,k_2}^{j_2}}K_{\lambda,\ell}(z,\,w)h_{P_{k_1-1}}^{\epsilon_1}(z)h_{I_{2,k_2}^{j_2}}^{\epsilon_2}(w)dm_\lambda(w)dm_\lambda(z)\right|\\
&=\left|\int_{Q_P}\int_{I_{2,k_2}^{j_2}}\left(K_{\lambda,\ell}(z,\,w)-\sum_{|\gamma|\leq \mathcal{K}-1}\frac{\partial_w^\gamma K_{\lambda,\ell}(z,\,c_{I_{2,k_2}^{j_2}})}{\gamma!}(w-c_{I_{2,k_2}^{j_2}})^\gamma\right)h_{P_{k_1-1}}^{\epsilon_1}(z)h_{I_{2,k_2}^{j_2}}^{\epsilon_2}(w)dm_\lambda(w)dm_\lambda(z)\right|\\
&=\left|\int_{Q_P}\int_{I_{2,k_2}^{j_2}}\sum_{|\gamma|= \mathcal{K}}\frac{\partial_w^\gamma K_{\lambda,\ell}(z,\,\eta_{w,k_2,j_2})}{\gamma!}(w-c_{I_{2,k_2}^{j_2}})^\gamma h_{P_{k_1-1}}^{\epsilon_1}(z)h_{I_{2,k_2}^{j_2}}^{\epsilon_2}(w)dm_\lambda(w)dm_\lambda(z)\right|\\
&\lesssim 2^{-(k_2-k_P)\mathcal{K}}\left(\frac{m_\lambda(I_{2,k_2}^{j_2})}{m_\lambda(P_{k_1-1})}\right)^{1/2}.
\end{align*}
This, in combination with inequality \eqref{doub}, yields
$$
\Big|B_{P,k_P-k_1,k_P-k_2}^{\epsilon_1,\epsilon_2,0,j_2}\Big|\lesssim 2^{-(k_2-k_P)\mathcal{K}}\frac{m_\lambda(Q_P)}{m_{\lambda}(P_{k_1-1})}\lesssim 2^{-(k_P-k_1)(n+1)} 2^{-(k_2-k_P)\mathcal{K}}.
$$
Furthermore, a change of variable yields that $${\rm I}_2=\sum_{k_1\geq 0}\sum_{k_2\leq -1}\sum_{j_2=1}^{2^{-k_2(n+1)}}\sum_{\epsilon_1\in\Gamma_n}\sum_{\epsilon_2\in\Gamma_n}B_{P,k_1,k_2}^{\epsilon_1,\epsilon_2,0,j_2} F_{P,k_1,k_2}^{\epsilon_1,\epsilon_2,0,j_2}(x)G_{P,k_1,k_2}^{\epsilon_1,\epsilon_2,0,j_2}(y).$$

For the term ${\rm I}_3$,  we note that for any integer $k\leq k_P$, there is a unique $I\in\mathcal{D}^0_{k-1}$ such that $I\cap \hat{Q}_P\neq\emptyset$. We denote this cube by $\hat{P}_{k-1}$. Then
\begin{align*}
{\rm I}_3
&=\sum_{k_1=k_P+1}^{+\infty}\sum_{k_2=-\infty}^{k_P}\sum_{j_1=1}^{2^{(k_1-k_P)(n+1)}}\sum_{\epsilon_1\in\Gamma_n}\sum_{\epsilon_2\in\Gamma_n}\\
&\hspace{1.0cm}\int_{I_{1,k_1}^{j_1}}\int_{\hat{Q}_P}K_{\lambda,\ell}(z,\,w)h_{I_{1,k_1}^{j_1}}^{\epsilon_1}(z)h_{\hat{P}_{k_2-1}}^{\epsilon_2}(w)dm_\lambda(w)dm_\lambda(z)h_{I_{1,k_1}^{j_1}}^{\epsilon_1}(x)h_{\hat{P}_{k_2-1}}^{\epsilon_2}(y)\chi_{\hat{Q}_P}(y).
\end{align*}
For any integer $k_1\geq k_{P}+1 $ and any integer $k_2\leq k_P$, we let $$F_{P,k_P-k_1,k_P-k_2}^{\epsilon_1,\epsilon_2,j_1,0}(x):=\left(\frac{m_\lambda(I_{1,k_1}^{j_1})}{m_\lambda(Q_P)}\right)^{1/2}h_{I_{1,k_1}^{j_1}}^{\epsilon_1}(x),\ \  G_{P,k_P-k_1,k_P-k_2}^{\epsilon_1,\epsilon_2,j_1,0}(y):=\left(\frac{m_{\lambda}(\hat{P}_{k_2-1})}{m_\lambda(Q_P)}\right)^{1/2}h_{\hat{P}_{k_2-1}}^{\epsilon_2}(y)\chi_{\hat{Q}_P}(y)$$ and $$B_{P,k_P-k_1,k_P-k_2}^{\epsilon_1,\epsilon_2,j_1,0}:=\left(\frac{m_\lambda(Q_P)}{m_{\lambda}(\hat{P}_{k_2-1})}\right)^{1/2}\left(\frac{m_\lambda(Q_P)}{m_\lambda(I_{1,k_1}^{j_1})}\right)^{1/2}\int_{I_{1,k_1}^{j_1}}\int_{\hat{Q}_P}K_{\lambda,\ell}(z,\,w)h_{I_{1,k_1}^{j_1}}^{\epsilon_1}(z)h_{\hat{P}_{k_2-1}}^{\epsilon_2}(w)dm_\lambda(w)dm_\lambda(z).$$
Then, it is direct to see that $\supp F_{P,k_P-k_1,k_P-k_2}^{\epsilon_1,\epsilon_2,j_1,0}\subset Q_P$, $\supp G_{P,k_P-k_1,k_P-k_2}^{\epsilon_1,\epsilon_2,j_1,0}\subset \hat{Q}_P$ and $\|F_{P,k_P-k_1,k_P-k_2}^{\epsilon_1,\epsilon_2,j_1,0}\|_{\infty}+\|G_{P,k_P-k_1,k_P-k_2}^{\epsilon_1,\epsilon_2,j_1,0}\|_{\infty}\leq m_\lambda(Q_P)^{-1/2}$. Moreover,  we apply Taylor's formula with respect to the $z$-variable at $c_{I_{1,k_1}^{j_1}}$ to deduce that there is a point $\xi_{z,k_1,j_1}$ lying in the line segment jointing $z$ with $c_{I_{1,k_1}^{j_1}}$ such that
$$K_{\lambda,\ell}(z,\,w)=\sum_{|\beta|\leq \mathcal{K}-1}\frac{\partial_z^\beta K_{\lambda,\ell}(c_{I_{1,k_1}^{j_1}},\,w)}{\beta!}(z-c_{I_{1,k_1}^{j_1}})^\beta+\sum_{|\beta|= \mathcal{K}}\frac{\partial_z^\beta K_{\lambda,\ell}(\xi_{z,k_1,j_1},\,w)}{\gamma!}(z-c_{I_{1,k_1}^{j_1}})^\beta.$$
Then we  apply the cancellation property of Alpert wavelet given in Lemma \ref{AW} and Lemma \ref{CZO} to deduce that
\begin{align*}
&\left|\int_{I_{1,k_1}^{j_1}}\int_{\hat{Q}_P}K_{\lambda,\ell}(z,\,w)h_{I_{1,k_1}^{j_1}}^{\epsilon_1}(z)h_{\hat{P}_{k_2-1}}^{\epsilon_2}(w)dm_\lambda(w)dm_\lambda(z)\right|\\
&=\left|\int_{I_{1,k_1}^{j_1}}\int_{\hat{Q}_P}\left(K_{\lambda,\ell}(z,\,w)-\sum_{|\beta|\leq \mathcal{K}-1}\frac{\partial_z^\beta K_{\lambda,\ell}(c_{I_{1,k_1}^{j_1}},\,w)}{\beta!}(z-c_{I_{1,k_1}^{j_1}})^\beta\right)h_{I_{1,k_1}^{j_1}}^{\epsilon_1}(z)h_{\hat{P}_{k_2-1}}^{\epsilon_2}(w)dm_\lambda(w)dm_\lambda(z)\right|\\
&=\left|\int_{I_{1,k_1}^{j_1}}\int_{\hat{Q}_P}\sum_{|\beta|= \mathcal{K}}\frac{\partial_z^\beta K_{\lambda,\ell}(\xi_{z,k_1,j_1},\,w)}{\gamma!}(z-c_{I_{1,k_1}^{j_1}})^\beta h_{I_{1,k_1}^{j_1}}^{\epsilon_1}(z)h_{\hat{P}_{k_2-1}}^{\epsilon_2}(w)dm_\lambda(w)dm_\lambda(z)\right|\\
&\lesssim 2^{-(k_1-k_P)\mathcal{K}}\left(\frac{m_\lambda(I_{1,k_1}^{j_1})}{m_\lambda(\hat{P}_{k_2-1})}\right)^{1/2}.
\end{align*}
This, in combination with inequality \eqref{doub}, yields
$$
\Big|B_{P,k_P-k_1,k_P-k_2}^{\epsilon_1,\epsilon_2,j_1,0}\Big|\lesssim 2^{-(k_1-k_P)\mathcal{K}}\frac{m_\lambda(Q_P)}{m_{\lambda}(\hat{P}_{k_2-1})}\lesssim 2^{-(k_1-k_P)\mathcal{K}}2^{-(k_P-k_2)(n+1)}.
$$
Furthermore, a change of variable yields that $${\rm I}_3=\sum_{k_1\leq -1}\sum_{k_2\geq 0}\sum_{j_1=1}^{2^{-k_1(n+1)}}\sum_{\epsilon_1\in\Gamma_n}\sum_{\epsilon_2\in\Gamma_n}B_{P,k_1,k_2}^{\epsilon_1,\epsilon_2,j_1,0} F_{P,k_1,k_2}^{\epsilon_1,\epsilon_2,j_1,0}(x)G_{P,k_1,k_2}^{\epsilon_1,\epsilon_2,j_1,0}(y).$$

For the term ${\rm I}_4$,  we see that
\begin{align*}
{\rm I}_4
&=\sum_{k_1=-\infty}^{k_P}\sum_{k_2=-\infty}^{k_P}\sum_{\epsilon_1\in\Gamma_n}\sum_{\epsilon_2\in\Gamma_n}\\
&\hspace{1.0cm}\int_{Q_P}\int_{\hat{Q}_P}K_{\lambda,\ell}(z,\,w)h_{P_{k_1-1}}^{\epsilon_1}(z)h_{\hat{P}_{k_2-1}}^{\epsilon_2}(w)dm_\lambda(w)dm_\lambda(z)h_{P_{k_1-1}}^{\epsilon_1}(x)\chi_{Q_P}(x)h_{\hat{P}_{k_2-1}}^{\epsilon_2}(y)\chi_{\hat{Q}_P}(y).
\end{align*}
For any integer $k_1\leq k_P $ and any integer $k_2\leq k_P$, we let $$F_{P,k_P-k_1,k_P-k_2}^{\epsilon_1,\epsilon_2,0,0}(x):=\Big(\frac{m_{\lambda}(P_{k_1-1})}{m_\lambda(Q_P)}\Big)^{1/2}h_{P_{k_1-1}}^{\epsilon_1}(x)\chi_{Q_P}(x),\ \ G_{P,k_P-k_1,k_P-k_2}^{\epsilon_1,\epsilon_2,0,0}(y):=\Big(\frac{m_{\lambda}(\hat{P}_{k_2-1})}{m_\lambda(Q_P)}\Big)^{1/2}h_{\hat{P}_{k_2-1}}^{\epsilon_2}(y)\chi_{\hat{Q}_P}(y)$$ and
\begin{align*}
B_{P,k_P-k_1,k_P-k_2}^{\epsilon_1,\epsilon_2,0,0}:&=\left(\frac{m_\lambda(Q_P)}{m_{\lambda}(P_{k_1-1})}\right)^{1/2}\left(\frac{m_\lambda(Q_P)}{m_\lambda(\hat{P}_{k_2-1})}\right)^{1/2}\int_{Q_P}\int_{\hat{Q}_P}K_{\lambda,\ell}(z,\,w)h_{P_{k_1-1}}^{\epsilon_1}(z)h_{\hat{P}_{k_2-1}}^{\epsilon_2}(w)dm_\lambda(w)dm_\lambda(z).
\end{align*}
Then, it is direct to see that $\supp F_{P,k_P-k_1,k_P-k_2}^{\epsilon_1,\epsilon_2,0,0}\subset Q_P$, $\supp G_{P,k_P-k_1,k_P-k_2}^{\epsilon_1,\epsilon_2,0,0}\subset \hat{Q}_P$ and $\|F_{P,k_P-k_1,k_P-k_2}^{\epsilon_1,\epsilon_2,0,0}\|_{\infty}+\|G_{P,k_P-k_1,k_P-k_2}^{\epsilon_1,\epsilon_2,0,0}\|_{\infty}\leq m_\lambda(Q_P)^{-1/2}$. Moreover, it follows from Lemma \ref{CZO} and inequality \eqref{doub} that
\begin{align*}
|B_{P,k_P-k_1,k_P-k_2}^{\epsilon_1,\epsilon_2,0,0}|&\lesssim \left(\frac{m_\lambda(Q_P)}{m_\lambda(P_{k_1-1})}\right)\left(\frac{m_\lambda(Q_P)}{m_\lambda(P_{k_2-1})}\right)\lesssim 2^{-(k_P-k_1)(n+1)}2^{-(k_P-k_2)(n+1)}.
\end{align*}
Furthermore, a change of variable yields that $${\rm I}_4=\sum_{\epsilon_1\in\Gamma_n}\sum_{\epsilon_2\in\Gamma_n}\sum_{k_1\geq0}\sum_{k_2\geq0}B_{P,k_1,k_2}^{\epsilon_1,\epsilon_2,0,0}F_{P,k_1,k_2}^{\epsilon_1,\epsilon_2,0,0}(x)G_{P,k_1,k_2}^{\epsilon_1,\epsilon_2,0,0}(y).$$

Finally, by relabeling $\nu=(\epsilon_1,\,\epsilon_2)$ and $k=(k_1,\,k_2,\,j_1,\,j_2)$, setting $\mathcal{I}=\Gamma_n\times \Gamma_n$ and setting some of the $B'$, $F'$, $G'$ to be the zero functions for notational simplicity, we can see that $U_P$ can be rewritten as the form \eqref{reform} with the coefficient satisfying the required estimates (the reason why we rewrite the sum as this form is only for simplicity). For example, when $k_1\geq 0$ and $k_2\leq -1$, we set $B_{P,k,v}=F_{P,k,v}=G_{P,k,v}=0$ if $j_1\neq 0$ or $j_2\leq 0$ or $j_2\geq 2^{-k_2(n+1)}$. In this case, if we choose $\mathcal{K}>(n+1)N$, then
\begin{align*}
|B_{P,k,v}|&\lesssim 2^{-k_1(n+1)} 2^{k_2\mathcal{K}}\chi_{[1,2^{-k_2(n+1)}]}(j_2)\chi_{\{0\}}(j_1)\\
&\leq C_N(1+|k_1|)^{-N}(1+|j_1|)^{-N}(1+|j_2|)^{-N}2^{k_2(\mathcal{K}-(n+1)N)}\\&\leq C_N (1+|k|)^{-N}.
\end{align*}
The proof of Lemma \ref{jqss} is complete.
\end{proof}
The following Lemma states that the kernel of Bessel--Riesz transform commutator can be also decomposed  locally into a summation of NWO sequences with a separate variable form and suitable decay factors.
\begin{lemma}\label{key2}
For any $N\in\mathbb{Z}_+$, there exist a finite index set $\mathcal{J}$, a sequence $B_{Q,k,v}$  satisfying
$$|B_{Q,k,v}|\leq C_N(1+|k|)^{-N},$$
a sequence $\beta_{Q,v}$ satisfying
$$|\beta_{Q,v}|\lesssim\left(\fint_{B_{\mathbb{R}_+^{n+1}}(c_Q,\,K\ell(Q))}|b(u)-(b)_{Q}|^3dm_\lambda(u)\right)^{1/3}\ {\rm for}\ {\rm some}\ {\rm large}\ {\rm constant}\ K,$$
 two functions $F_{Q,k,v}$ and $G_{Q,k,v}$ satisfying $\supp F_{Q,k,v}\subset Q$, $\supp G_{Q,k,v}\subset \hat{Q}$ and
$$\|F_{Q,k,v}\|_{L^3(\mathbb{R}_+^{n+1},dm_\lambda)}+\|G_{Q,k,v}\|_{L^3(\mathbb{R}_+^{n+1},dm_\lambda)}\lesssim m_\lambda(Q)^{-1/6}$$
for any $Q\in \mathcal{D}^0$, $k\in\mathbb{Z}$ and $v\in\mathcal{J}$,
such that the kernel of $[b,R_{\lambda,\ell}]$ can be factorized as
\begin{align*}
\sum_{Q\in\mathcal{D}^0}\sum_{v\in\mathcal{J}}\sum_{k\in\mathbb{Z}^4}B_{Q,k,v}\beta_{Q,v}F_{Q,k,v}(x)G_{Q,k,v}(y).
\end{align*}
\end{lemma}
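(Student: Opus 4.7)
The plan is to combine the Whitney decomposition of $(\mathbb{R}_+^{n+1}\times\mathbb{R}_+^{n+1})\setminus\Xi_+$ from Lemma \ref{Whitney} with the local factorization of the Bessel--Riesz kernel from Lemma \ref{jqss}, and then absorb the commutator weight $b(x)-b(y)$ onto either side of each Whitney piece via a local mean of $b$. First I would write the kernel of $[b,R_{\lambda,\ell}]$ as
\[
(b(x)-b(y))K_{\lambda,\ell}(x,y)=\sum_{P\in\mathcal{P}}(b(x)-b(y))U_{P}(x,y),
\]
where $P=Q_P\times\hat Q_P$ with $Q_P,\hat Q_P\in\mathcal{D}^0$ of comparable sidelengths and $\operatorname{dist}(Q_P,\hat Q_P)\sim\ell(Q_P)$. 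I fix $K$ large enough, depending only on the constants of Lemma \ref{Whitney}, so that $B_P:=B_{\mathbb{R}_+^{n+1}}(c_{Q_P},K\ell(Q_P))$ contains both $Q_P$ and $\hat Q_P$. Only a bounded number of Whitney pieces share the same left factor $Q_P=Q$, which will allow the final sum to be indexed by $Q\in\mathcal{D}^0$ alone.

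For each $P$, I would split the weight as $b(x)-b(y)=(b(x)-(b)_{Q_P})-(b(y)-(b)_{Q_P})$ and apply Lemma \ref{jqss} to get $U_P(x,y)=\sum_{v\in\mathcal{I}}\sum_{k\in\mathbb{Z}^4}B_{P,k,v}F_{P,k,v}(x)G_{P,k,v}(y)$. Setting $\beta_P:=(\fint_{B_P}|b-(b)_{Q_P}|^3\,dm_\lambda)^{1/3}$, I would then absorb the two halves of the splitting into the $F$- and $G$-factors respectively, defining
\[
\tilde F^{(1)}_{P,k,v}:=\frac{(b-(b)_{Q_P})F_{P,k,v}}{\beta_P},\qquad \tilde G^{(2)}_{P,k,v}:=\frac{(b-(b)_{Q_P})G_{P,k,v}}{\beta_P},
\]
and leaving $\tilde G^{(1)}_{P,k,v}:=G_{P,k,v}$, $\tilde F^{(2)}_{P,k,v}:=F_{P,k,v}$. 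The coefficient attached to each product becomes $\pm B_{P,k,v}\beta_P$, inheriting the $(1+|k|)^{-N}$ decay from Lemma \ref{jqss}.

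The supports stay in $Q_P$ and $\hat Q_P$ by construction. For the $L^3$ bound I would estimate
\[
\|\tilde F^{(1)}_{P,k,v}\|_{L^3}^3\le\frac{\|F_{P,k,v}\|_\infty^3}{\beta_P^3}\int_{Q_P}|b-(b)_{Q_P}|^3\,dm_\lambda\lesssim\frac{m_\lambda(Q_P)^{-3/2}}{\beta_P^3}\cdot m_\lambda(Q_P)\beta_P^3 = m_\lambda(Q_P)^{-1/2},
\]
using $\int_{Q_P}|b-(b)_{Q_P}|^3\le\int_{B_P}|b-(b)_{Q_P}|^3=m_\lambda(B_P)\beta_P^3\sim m_\lambda(Q_P)\beta_P^3$ by doubling; the analogous estimate on $\hat Q_P$ handles $\tilde G^{(2)}$, while the untouched factors $G_{P,k,v}$, $F_{P,k,v}$ give $\|\cdot\|_{L^3}\lesssim m_\lambda(Q_P)^{1/3}\cdot m_\lambda(Q_P)^{-1/2}=m_\lambda(Q_P)^{-1/6}$ directly from Lemma \ref{jqss}.

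Finally I would relabel by $Q\in\mathcal{D}^0$, bundling the bounded choice of Whitney mate $\hat Q$, the two terms from the splitting, and the Lemma \ref{jqss} index $v\in\mathcal{I}$ into a single enlarged finite set $\mathcal{J}$, setting all summands corresponding to unavailable mates equal to zero. After this relabeling $\beta_P$ is precisely $(\fint_{B(c_Q,K\ell(Q))}|b-(b)_Q|^3\,dm_\lambda)^{1/3}$, which matches the claim. The main subtlety I anticipate is purely organizational: one has to check that the two-sided splitting does not break the cancellation underlying the $(1+|k|)^{-N}$ decay, which it does not, since the weight $b(\cdot)-(b)_{Q_P}$ is multiplied onto $F_{P,k,v}$ or $G_{P,k,v}$ \emph{after} the Alpert wavelet integration against $K_{\lambda,\ell}$ has already been carried out, leaving the coefficients $B_{P,k,v}$ untouched. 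No further analytic input beyond Lemma \ref{jqss} and the doubling property of $dm_\lambda$ is needed.
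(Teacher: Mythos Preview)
Your proposal is correct and follows essentially the same route as the paper: Whitney decomposition plus Lemma \ref{jqss}, the splitting $b(x)-b(y)=(b(x)-(b)_{Q_P})-(b(y)-(b)_{Q_P})$, normalization by a local $L^3$-oscillation $\beta_P$, and the final relabeling by $Q\in\mathcal{D}^0$ via the bounded multiplicity of Whitney mates. The only cosmetic difference is that the paper takes $\beta_P=(m_\lambda(Q_P)^{-1}\int_{Q_P\cup\hat Q_P}|b-(b)_{Q_P}|^3\,dm_\lambda)^{1/3}$ and then dominates it by the average over $B_{\mathbb{R}_+^{n+1}}(c_{Q_P},K\ell(Q_P))$, whereas you define $\beta_P$ directly as that ball average; by doubling the two normalizations are comparable, and your $L^3$ verification goes through unchanged.
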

\begin{proof}
By Lemma \ref{jqss}, the kernel  of $[b,R_{\lambda,\ell}]$ can be factorized as
\begin{align}\label{fur1}
\sum_{P\in\mathcal{P}}\sum_{v\in\mathcal{I}}\sum_{k\in\mathbb{Z}^4}(b(x)-b(y))B_{P,k,v}F_{P,k,v}(x)G_{P,k,v}(y),
\end{align}
for some functions $F_{P,k,v}$, $G_{P,k,v}$, a sequence $\{B_{P,k,v}\}$ and an index set $\mathcal{I}$ satisfying (1)--(3) listed in Lemma \ref{jqss}.
Now for any $P\in\mathcal{P}$, we write $b(x)-b(y)$ as $(b(x)-(b)_{Q_P})-(b(y)-(b)_{Q_P})$. Then \eqref{fur1} can be further factorized as
\begin{align}\label{fur2}
&\sum_{P\in\mathcal{P}}\sum_{v\in\mathcal{I}}\sum_{k\in\mathbb{Z}^4}B_{P,k,v}(b(x)-(b)_{Q_P})F_{P,k,v}(x)G_{P,k,v}(y)\nonumber\\
&\hspace{1.0cm}+\sum_{P\in\mathcal{P}}\sum_{v\in\mathcal{I}}\sum_{k\in\mathbb{Z}^4}B_{P,k,v}F_{P,k,v}(x)((b)_{Q_P}-b(y))G_{P,k,v}(y)\nonumber\\
&=\sum_{P\in\mathcal{P}}\sum_{v\in\mathcal{I}}\sum_{k\in\mathbb{Z}^4}\sum_{j=0}^1B_{P,k,v}\beta_{P}F_{P,k,v,j}(x)G_{P,k,v,j}(y),
\end{align}
where we introduce the following notations:
\begin{align*}
&\beta_{P}:=\left(m_\lambda(Q_P)^{-1}\int_{Q_P\cup\hat{Q}_P}|b(u)-(b)_{Q_P}|^3dm_\lambda(u)\right)^{1/3},\\
&F_{P,k,v,j}(x):=\beta_{P}^{-j}(b(x)-(b)_{Q_P})^jF_{P,k,v}(x),\ j=0,1,\\
&G_{P,k,v,j}(y):=\beta_{P}^{j-1}((b)_{Q_P}-b(y))^{1-j}G_{P,k,v}(y),\ j=0,1.
\end{align*}
Moreover, $\supp F_{P,k,v,j}\subset Q_P$, $\supp G_{P,k,v,j}\subset \hat{Q}_P$, and there is a large constant $K$ such that $Q_P\cup\hat{Q}_P\subset B_{\mathbb{R}_+^{n+1}}(c_{Q_P},\,K\ell(Q_P))$. Furthermore,
\begin{align}
&\beta_{P}\lesssim\left(\fint_{KQ_P}|b(u)-(b)_{Q_P}|^3dm_\lambda(u)\right)^{1/3},\label{requ0}\\
&\|F_{P,k,v,j}\|_{L^3(\mathbb{R}_+^{n+1},dm_\lambda)}\lesssim m_\lambda(Q_P)^{-1/6},\,j=0,1,\\
&\|G_{P,k,v,j}\|_{L^3(\mathbb{R}_+^{n+1},dm_\lambda)}\lesssim m_\lambda(Q_P)^{-1/6},\,j=0,1.\label{requ2}
\end{align}
To continue, we note that the index set in the sum is $\mathcal{P}$, which is a collection of cubes in $\mathbb{R}_+^{n+1}\times\mathbb{R}_+^{n+1}$, so we need to reorganize the sum such that the index set will be the cubes in $\mathcal{D}^0$. To this end, we recall from Lemma \ref{jqss} that for each $P=Q_P\times \hat{Q}_P\in\mathcal{P}$, we have
$$\ell(Q_P)=\ell(\hat{Q}_P)\approx {\rm dist}(Q_P,\,\hat{Q}_{P}).$$
This implies that there is a finite index set $\mathcal{A}$ such that for any $Q\in\mathcal{D}^0$, there are at most finite product-cubes of the form $Q\times R_{Q,s}\in\mathcal{P}$, $s\in\mathcal{A}$.
This allows us to relabel the $F_{P,k,v,j}\ 's$ and $G_{P,k,v,j}\ 's$ in \eqref{fur2} as $F_{Q,s,k,v,j}\ 's$ and $G_{Q,s,k,v,j}\ 's$, respectively, for any $P\in\mathcal{P}$. Then,  we rewrite the kernel  of $[b,R_{\lambda,\ell}]$ as
$$\sum_{Q\in\mathcal{D}^0}\sum_{v\in\mathcal{J}}\sum_{k\in\mathbb{Z}^4}B_{Q,k,v}\beta_{Q,v}F_{Q,k,v}(x)G_{Q,k,v}(y)$$
for some index set $\mathcal{J}=\mathcal{I}\times \mathcal{A}\times\{0,\,1\}$ with $B_{Q,k,v}$, $\beta_{Q,v}$, $F_{Q,k,v}$ and $G_{Q,k,v}$ satisfying the required estimates, which can be seen from Lemma \ref{jqss} together with inequalities \eqref{requ0}--\eqref{requ2}.
\end{proof}
The main result of this subsection is the following upper bound for  Schatten--Lorentz norm of Riesz transform commutator.
\begin{proposition}\label{divide2}
Suppose $1< p<\infty$, $1\leq q\leq \infty$ and $b\in L^p_{\rm loc}(\mathbb{R}^{n+1}_+,dm_\lambda)$. Then there is a constant $C>0$ such that for any $\ell\in\{1,2,...,n+1\}$,
\begin{align*}
\|[b,R_{\lambda,\ell}]\|_{S_\lambda^{p,q}}\leq C\|b\|_{{\rm OSC}_{p,q}(\mathbb{R}_+^{n+1},dm_\lambda)}.
\end{align*}
\end{proposition}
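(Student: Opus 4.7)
The plan is to reduce the Schatten--Lorentz estimate for $[b,R_{\lambda,\ell}]$ to the NWO bound \eqref{nwose}, with the sequence of scalars controlled by the oscillation norm of $b$. The starting point is the kernel factorization already provided by Lemma \ref{key2}: for a large $N$ to be chosen later, the kernel of $[b,R_{\lambda,\ell}]$ equals
\begin{equation*}
\sum_{v\in\mathcal{J}}\sum_{k\in\mathbb{Z}^4}\sum_{Q\in\mathcal{D}^0} B_{Q,k,v}\,\beta_{Q,v}\,F_{Q,k,v}(x)\,G_{Q,k,v}(y),
\end{equation*}
where $|B_{Q,k,v}|\leq C_N(1+|k|)^{-N}$, $\supp F_{Q,k,v}\subset Q$, $\supp G_{Q,k,v}\subset\hat Q$, and the crucial size condition $\|F_{Q,k,v}\|_{L^3(dm_\lambda)}+\|G_{Q,k,v}\|_{L^3(dm_\lambda)}\lesssim m_\lambda(Q)^{-1/6}=m_\lambda(Q)^{1/3-1/2}$ holds. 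The exponent $3>2$ is precisely what is required to fall inside the NWO framework recalled in Section~\ref{nwooo}.

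Next, for each fixed pair $(v,k)$ I would consider the operator $A_{v,k}(g)(x):=\sum_{Q\in\mathcal{D}^0} B_{Q,k,v}\,\beta_{Q,v}\,\langle g,G_{Q,k,v}\rangle F_{Q,k,v}(x)$, view $(B_{Q,k,v}\beta_{Q,v})_Q$ as its sequence of coefficients and $\{F_{Q,k,v}\}_Q,\{G_{Q,k,v}\}_Q$ (after rescaling by an absolute constant) as a pair of NWO sequences. Then \eqref{nwose} yields
\begin{equation*}
\|A_{v,k}\|_{S_\lambda^{p,q}}\lesssim \|\{B_{Q,k,v}\beta_{Q,v}\}_Q\|_{\ell^{p,q}}\leq C_N(1+|k|)^{-N}\|\{\beta_{Q,v}\}_Q\|_{\ell^{p,q}}.
\end{equation*}
Choosing $N>4$ so that $\sum_{k\in\mathbb{Z}^4}(1+|k|)^{-N}<\infty$ and using the (quasi-)triangle inequality on $S_\lambda^{p,q}$ together with finiteness of $\mathcal{J}$, I would sum to get
\begin{equation*}
\|[b,R_{\lambda,\ell}]\|_{S_\lambda^{p,q}}\lesssim\sum_{v\in\mathcal{J}}\|\{\beta_{Q,v}\}_{Q\in\mathcal{D}^0}\|_{\ell^{p,q}}.
\end{equation*}

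The remaining step is to transfer the $\ell^{p,q}$ estimate of $\{\beta_{Q,v}\}_{Q\in\mathcal{D}^0}$ to the oscillation norm of $b$. Since $|\beta_{Q,v}|$ is bounded by the $L^3$ mean oscillation of $b$ over the Bessel ball $B_{\mathbb{R}_+^{n+1}}(c_Q,K\ell(Q))$, I would apply Lemma \ref{thm:existence2} to enclose each such ball in a cube $\tilde Q$ of some adjacent system $\mathcal{D}^{\iota(Q)}$ of comparable measure, and use doubling \eqref{doub} to note that the assignment $Q\mapsto\tilde Q$ is boundedly many-to-one within each adjacent system. This produces
\begin{equation*}
\|\{\beta_{Q,v}\}_{Q\in\mathcal{D}^0}\|_{\ell^{p,q}}\lesssim\sum_{\iota=1}^\kappa\|\{MO^3_{\tilde Q}(b)\}_{\tilde Q\in\mathcal{D}^{\iota}}\|_{\ell^{p,q}},
\end{equation*}
after which Lemma \ref{technic} upgrades $MO^3$ to $MO$ and identifies the right-hand side (up to a constant) with $\|b\|_{{\rm OSC}_{p,q}(\mathbb{R}_+^{n+1},dm_\lambda)}$.

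The hardest part will be this last step: the $\beta_{Q,v}$ involve oscillation on the non-dyadic enlargement $B_{\mathbb{R}_+^{n+1}}(c_Q,K\ell(Q))$ with a genuinely higher integrability exponent than what appears in the definition of ${\rm OSC}_{p,q}$, so one must simultaneously manage the passage to adjacent dyadic systems (absorbing a finite multiplicity and a doubling factor without losing $\ell^{p,q}$-summability) and invoke the self-improvement Lemma \ref{technic} to cancel the $L^3$ power. The Rochberg--Semmes NWO bound does not know about the Bessel measure, so the entire geometry-dependent information is funnelled through these two tools, and verifying that the multiplicity counts remain summable is where the reverse-doubling nature of $(\mathbb{R}_+^{n+1},|\cdot|,dm_\lambda)$ must be handled delicately, in the same spirit as in the proof of Lemma \ref{technic}.
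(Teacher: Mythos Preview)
Your proposal is correct and follows essentially the same route as the paper: invoke Lemma~\ref{key2}, apply the NWO bound \eqref{nwose} to each $(v,k)$-piece, sum using the $(1+|k|)^{-N}$ decay, then pass from the $L^3$ oscillation over $B_{\mathbb{R}_+^{n+1}}(c_Q,K\ell(Q))$ to the $\mathrm{OSC}_{p,q}$ norm via Lemma~\ref{thm:existence2} (bounded multiplicity) and Lemma~\ref{technic}. The only cosmetic point you glossed over is that the $\beta_{Q,v}$ bound in Lemma~\ref{key2} has $(b)_Q$ rather than the average over the enlarged ball, so one first replaces $(b)_Q$ by $(b)_{B_{\mathbb{R}_+^{n+1}}(c_Q,K\ell(Q))}$ at the cost of a doubling constant before enclosing in the adjacent dyadic cube $\tilde Q$.
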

\begin{proof}
To begin with, we let $N$ be a fixed sufficiently large constant and then let $B_{Q,k,v}$, $\beta_{Q,v}$, $F_{Q,k,v}$ and $G_{Q,k,v}$ be the sequences or functions constructed in Lemma \ref{key2}.
Then we apply  inequality \eqref{nwose} to deduce that
\begin{align}\label{dvbh}
\|[b,R_{\lambda,\ell}]\|_{S_\lambda^{p,q}}
&\leq \sum_{v\in\mathcal{J}}\sum_{k\in\mathbb{Z}^4} \left\|\sum_{Q\in\mathcal{D}^0}B_{Q,k,v}\beta_{Q,v}\langle\cdot,\,G_{Q,k,v} \rangle F_{Q,k,v}\right\|_{S_\lambda^{p,q}}\nonumber\\
&\lesssim \sum_{v\in\mathcal{J}}\sum_{k\in\mathbb{Z}^4} \left\|\{B_{Q,k,v}\beta_{Q,v}\}_{Q\in\mathcal{D}^0}\right\|_{\ell^{p,q}}\nonumber\\
&\lesssim \sum_{v\in\mathcal{J}}\sum_{k\in\mathbb{Z}^4}(1+|k|)^{-N} \left\|\left\{\left(\fint_{B_{\mathbb{R}_+^{n+1}}(c_Q,\,K\ell(Q))}|b(u)-(b)_{Q}|^3dm_\lambda(u)\right)^{1/3}\right\}_{Q\in\mathcal{D}^0}\right\|_{\ell^{p,q}}\nonumber\\
&\lesssim \left\|\left\{\left(\fint_{B_{\mathbb{R}_+^{n+1}}(c_Q,\,K\ell(Q))}|b(u)-(b)_{Q}|^3dm_\lambda(u)\right)^{1/3}\right\}_{Q\in\mathcal{D}^0}\right\|_{\ell^{p,q}}\nonumber\\
&\lesssim \left\|\left\{\left(\fint_{B_{\mathbb{R}_+^{n+1}}(c_Q,\,K\ell(Q))}|b(u)-(b)_{B_{\mathbb{R}_+^{n+1}}(c_Q,\,K\ell(Q))}|^3dm_\lambda(u)\right)^{1/3}\right\}_{Q\in\mathcal{D}^0}\right\|_{\ell^{p,q}}
\end{align}
for some large constant $K>0$ given in Lemma \ref{key2}, where in the last inequality we used inequality \eqref{doub}. To continue, we apply Lemma \ref{thm:existence2} to see that for any $Q\in\mathcal{D}^0$, there is a $\iota\in\{1,2,\ldots ,\kappa\}$ and a cube $P_Q\in \mathcal{D}^\iota$ such that
\begin{equation*}
    {B_{\mathbb{R}_+^{n+1}}(c_Q,\,K\ell(Q))}\subseteq P_Q\subseteq C_{{\rm adj}}{B_{\mathbb{R}_+^{n+1}}(c_Q,\,K\ell(Q))}.
\end{equation*}
Note that for any $Q\in\mathcal{D}^0$, we have $Q\subset P_Q$. Therefore, the number of $Q'\in\mathcal{D}^0$ such that $P_Q=P_{Q'}$ is at most $c_n$. This, in combination with Lemma \ref{technic}, implies that
\begin{align}\label{dvbh2}
&\left\|\left\{\left(\fint_{B_{\mathbb{R}_+^{n+1}}(c_Q,\,K\ell(Q))}|b(u)-(b)_{B_{\mathbb{R}_+^{n+1}}(c_Q,\,K\ell(Q))}|^3dm_\lambda(u)\right)^{1/3}\right\}_{Q\in\mathcal{D}^0}\right\|_{\ell^{p,q}}\nonumber\\
&\lesssim \left\|\left\{\left(\fint_{P_Q}|b(u)-(b)_{P_Q}|^3dm_\lambda(u)\right)^{1/3}\right\}_{Q\in\mathcal{D}^0}\right\|_{\ell^{p,q}}\nonumber\\
&\lesssim \sum_{\iota=1}^\kappa\|\{MO_Q^3(b)\}_{Q\in\mathcal{D}^\iota}\|_{\ell^{p,q}}\nonumber\\
&\lesssim \|b\|_{{\rm OSC}_{p,q}(\mathbb{R}_+^{n+1},dm_\lambda)}.
\end{align}
Substituting this inequality into \eqref{dvbh}, we finishes the proof of Proposition \ref{divide2}.
\end{proof}

\section{Proof of Theorem \ref{mainLor}:\ lower bound}\label{Sec5}
\setcounter{equation}{0}
This section is devoted to establishing the lower bound for the Schatten--Lorentz norm of the Riesz transform commutator. To begin with, for any $\nu\in\{1,2,\ldots,\kappa\}$ and $Q\in\mathcal{D}^\nu$, let $\hat{Q}$ be the cube chosen in Lemma \ref{sign}. Define
$$J_{Q}(x,\,y):=m_\lambda(Q)^{-1}m_\lambda(\hat{Q})^{-1}K_{\lambda,\ell}(y,\,x)^{-1}\chi_Q(x)\chi_{\hat{Q}}(y).$$
Here we omit the dependence on $\lambda$ and $\ell$ since these two parameters play no role in the proof below.

The following Lemma states that the inverse of Bessel--Riesz transform kernel can be decomposed  locally into a summation of NWO sequences with a separate variable form and a suitable decay factor.
\begin{lemma}\label{jq}
There is a finite index set $\mathcal{I}$ such that
for any $N\in\mathbb{Z}_+$, $\ell\in\{1,2,\ldots,n+1\}$, $\nu\in\{1,2,\ldots,\kappa\}$ and $Q\in\mathcal{D}^\nu$, one can construct  functions $f_{Q,k,u}$ , $g_{Q,k,u}$ and a sequence $\{C_{Q,k,u}\}$ of real numbers satisfying
\begin{enumerate}
  \item ${\rm supp} f_{Q,k,u} \subset Q$, ${\rm supp} g_{Q,k,u} \subset \hat{Q}$,
  \item $\|f_{Q,k,u}\|_{\infty}\leq m_\lambda(Q)^{-1/2}$, $\|g_{Q,k,u}\|_{\infty}\leq m_\lambda(Q)^{-1/2}$,
  \item $|C_{Q,k,u}|\leq C_N(1+|k|)^{-N}$ for some $C_N>0$ independent of  $Q$ and $k$,
\item each $J_{Q}$ admits the following factorization
\begin{align*}
J_{Q}(x,\,y)=\sum_{u\in\mathcal{I}}\sum_{k\in\mathbb{Z}^4}C_{Q,k,u}f_{Q,k,u}(x)g_{Q,k,u}(y),\ x,y\in\mathbb{R}_+^{n+1}.
\end{align*}
\end{enumerate}
\end{lemma}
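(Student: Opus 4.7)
The plan is to mirror the proof of Lemma \ref{jqss}, replacing the kernel $K_{\lambda,\ell}$ by its reciprocal on the product region $Q\times \hat{Q}$. The crucial ingredient making this viable is Lemma \ref{sign}: it guarantees that $K_{\lambda,\ell}(y,x)$ has a constant sign on $Q\times\hat{Q}$ and satisfies a definite lower bound $|K_{\lambda,\ell}(y,x)|\geq c/m_\lambda(Q)$, so that $K_{\lambda,\ell}(y,x)^{-1}$ is a well-defined smooth function on that domain.

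First, I will establish derivative bounds for the reciprocal. By Lemma \ref{sign} combined with \eqref{doub}, for all $(x,y)\in Q\times \hat{Q}$ we have $|x-y|\sim \ell(Q)$ and $m_\lambda(B_{\mathbb{R}_+^{n+1}}(x,|x-y|))\sim m_\lambda(Q)\sim m_\lambda(\hat{Q})$. Plugging this into Lemma \ref{CZO} yields
\begin{align*}
|\partial_x^{\alpha}\partial_y^{\beta}K_{\lambda,\ell}(y,x)|\lesssim m_\lambda(Q)^{-1}\ell(Q)^{-|\alpha|-|\beta|}.
\end{align*}
A repeated application of the product and chain rules (Faà di Bruno) starting from $K_{\lambda,\ell}\cdot K_{\lambda,\ell}^{-1}\equiv 1$, in combination with the lower bound from Lemma \ref{sign}, then yields
\begin{align*}
\bigl|\partial_x^{\alpha}\partial_y^{\beta}\bigl(K_{\lambda,\ell}(y,x)^{-1}\bigr)\bigr|\lesssim m_\lambda(Q)\,\ell(Q)^{-|\alpha|-|\beta|} \quad\text{on } Q\times\hat{Q}.
\end{align*}

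Second, I carry out the Alpert wavelet decomposition in parallel to the proof of Lemma \ref{jqss}. Let $\{h_{I,\mathcal{K}}^\epsilon\}$ be the Alpert basis of Lemma \ref{AW} adapted to the system $\mathcal{D}^\nu$ with $\mathcal{K}$ sufficiently large, and define the operators $D_{k}^{(1)}, D_{k}^{(2)}$ as in \eqref{dkdef}--\eqref{dkdef2}. Since $Q\in\mathcal{D}_{k_Q}^\nu$ and $\hat{Q}\in\mathcal{D}_{k_Q}^\nu$ share the same generation $k_Q$, only the "finer-scale" situation appears: writing
\begin{align*}
J_Q(x,y)=\sum_{k_1,k_2>k_Q}D_{k_1}^{(1)}D_{k_2}^{(2)}(J_Q)(x,y),
\end{align*}
each coefficient is an integral of the form $\int\!\!\int K_{\lambda,\ell}(w,z)^{-1}h_{I_1}^{\epsilon_1}(z)h_{I_2}^{\epsilon_2}(w)\,dm_\lambda\otimes dm_\lambda$ over subcubes $I_1\subset Q$, $I_2\subset \hat{Q}$ of finer generations $k_1,k_2$. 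Invoking the moment vanishing of Alpert wavelets together with Taylor expansion of $K_{\lambda,\ell}(y,x)^{-1}$ at $(c_{I_1},c_{I_2})$, controlled by the derivative bound just derived, produces decay of size $2^{-(k_1-k_Q)\mathcal{K}}2^{-(k_2-k_Q)\mathcal{K}}$, exactly as in the "Case 1" term ${\rm I}_1$ of Lemma \ref{jqss}.

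Third, I relabel: setting $f_{Q,k,u}$ and $g_{Q,k,u}$ to be appropriate $L^\infty$-normalized multiples of $h_{I_1}^{\epsilon_1}$ and $h_{I_2}^{\epsilon_2}$ (using the $L^\infty$ bound from Lemma \ref{AW} and the prefactor $m_\lambda(Q)^{-1}m_\lambda(\hat{Q})^{-1}\sim m_\lambda(Q)^{-2}$ of $J_Q$), and placing the remaining weight into $C_{Q,k,u}$, the supports and sup-norm bounds of items (1), (2) follow immediately, while item (3) follows by choosing $\mathcal{K}>(n+1)N$ as in the final lines of the proof of Lemma \ref{jqss}. Relabeling $u=(\epsilon_1,\epsilon_2)$, $k=(k_1,k_2,j_1,j_2)$ with $\mathcal{I}=\Gamma_n\times \Gamma_n$ and setting the appropriate components to zero outside the relevant index ranges gives (4).

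The main technical obstacle is the Faà di Bruno-type estimate for $K_{\lambda,\ell}(y,x)^{-1}$: one must verify that iterating the chain rule against the bounds of Lemmas \ref{CZO} and \ref{sign} preserves the clean form $m_\lambda(Q)\ell(Q)^{-|\alpha|-|\beta|}$ without picking up extraneous factors. Once this derivative estimate is in hand, the rest of the argument is a structurally parallel (indeed simpler, since only one of the four cases of Lemma \ref{jqss} occurs) repetition of the Alpert-wavelet--Taylor-expansion scheme.
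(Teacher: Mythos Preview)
Your derivative estimate for $K_{\lambda,\ell}(y,x)^{-1}$ via Fa\`a di Bruno combined with Lemmas \ref{CZO} and \ref{sign} is correct and is indeed the key new ingredient relative to Lemma \ref{jqss}. However, your claim that ``only the finer-scale situation appears'' is wrong, and this is a genuine gap.

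The Alpert system $\{h_{I,\mathcal{K}}^\epsilon\}_{I\in\mathcal{D}^\nu}$ is an orthonormal basis of $L^2(\mathbb{R}_+^{n+1},dm_\lambda)$, not of $L^2(Q,dm_\lambda)$. For a function supported in $Q$, the coefficients $\langle J_Q(\cdot,y),h_{I}^\epsilon\rangle$ with $I\supsetneq Q$ (coarse scales $k_1\le k_Q$) do \emph{not} vanish in general: restricted to $Q$, such an $h_I^\epsilon$ is a nonzero polynomial, so the pairing against $J_Q(\cdot,y)$ picks up the low-order moments of $J_Q$ on $Q$, which have no reason to be zero. The identity $J_Q=\sum_{k_1,k_2>k_Q}D_{k_1}^{(1)}D_{k_2}^{(2)}(J_Q)$ you wrote is therefore false. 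The paper accordingly splits into four cases ${\rm I}_1,\ldots,{\rm I}_4$ exactly as in Lemma \ref{jqss}, and multiplies by $\chi_Q(x)\chi_{\hat Q}(y)$ to restore the correct support for the coarse-scale pieces.

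The missing cases are not hard once noticed: for $k_1\le k_Q$ there is a unique $I\in\mathcal{D}_{k_1-1}^\nu$ meeting $Q$, and the required decay in $k_Q-k_1$ comes not from cancellation but from the volume ratio $m_\lambda(Q)/m_\lambda(I)\lesssim\delta^{(k_Q-k_1)(n+1)}$ via \eqref{doub}. You need to add the three extra cases ${\rm I}_2,{\rm I}_3,{\rm I}_4$ (or, equivalently, replace the full Alpert expansion by a local one that explicitly includes the polynomial projection $E^\mu_{Q,\mathcal{K}}$ term and then handle that term separately). With that addition your outline matches the paper's proof.
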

\begin{proof}
To begin with, we let $\{h_{I,\mathcal{K}}^\epsilon\}_{I\in\mathcal{D}^\nu,\epsilon\in\Gamma_{n,\mathcal{K}}}$ be the Alpert wavelet given in Lemma \ref{AW} with $\mathcal{D}:=\mathcal{D}^\nu$ and $\mathcal{K}$ being a sufficiently large integer. Then we define
\begin{align}
&D_{k,\nu}^{(1)}(f)(x,\,y)=\sum_{I\in\mathcal{D}^\nu_{k-1}}\sum_{\epsilon\in\Gamma_{n,\mathcal{K}}}\langle f(\cdot,\,y), h_{I,\mathcal{K}}^\epsilon\rangle h_{I,\mathcal{K}}^\epsilon(x),\label{dkdef}\\
&D_{k,\nu}^{(2)}(f)(x,\,y)=\sum_{I\in\mathcal{D}^\nu_{k-1}}\sum_{\epsilon\in\Gamma_{n,\mathcal{K}}}\langle f(x,\,\cdot), h_{I,\mathcal{K}}^\epsilon\rangle h_{I,\mathcal{K}}^\epsilon(y).\label{dkdef2}
\end{align}
In what follows, we omit the index $\mathcal{K}$ in $h_{I,\mathcal{K}}^\epsilon$ for simplicity.   By Lemma \ref{AW}, $J_Q$ can be decomposed as follows.
\begin{align}
&J_{Q}(x,\,y)\nonumber\\
&=\left(\sum_{k_1=k_Q+1}^{+\infty}D_{k_1,\nu}^{(1)}+\sum_{k_1=-\infty}^{k_Q}D_{k_1,\nu}^{(1)}\right)\left(\sum_{k_2=k_Q+1}^{+\infty}D_{k_2,\nu}^{(2)}+\sum_{k_2=-\infty}^{k_Q}D_{k_2,\nu}^{(2)}\right)(J_Q)(x,\,y)\chi_{Q}(x)\chi_{\hat{Q}}(y)\nonumber\\
&=\left(\sum_{k_1=k_Q+1}^{+\infty}\sum_{k_2=k_Q+1}^{+\infty}+\sum_{k_1=-\infty}^{k_Q}\sum_{k_2=k_Q+1}^{+\infty}+\sum_{k_1=k_Q+1}^{+\infty}\sum_{k_2=-\infty}^{k_Q}+\sum_{k_1=-\infty}^{k_Q}\sum_{k_2=-\infty}^{k_Q}\right)D_{k_1,\nu}^{(1)}D_{k_2,\nu}^{(2)}(J_Q)(x,\,y)\chi_{Q}(x)\chi_{\hat{Q}}(y)\nonumber\\
&=\left(\sum_{k_1=k_Q+1}^{+\infty}\sum_{k_2=k_Q+1}^{+\infty}+\sum_{k_1=-\infty}^{k_Q}\sum_{k_2=k_Q+1}^{+\infty}+\sum_{k_1=k_Q+1}^{+\infty}\sum_{k_2=-\infty}^{k_Q}+\sum_{k_1=-\infty}^{k_Q}\sum_{k_2=-\infty}^{k_Q}\right)\sum_{I_1\in\mathcal{D}^\nu_{k_1-1}}\sum_{I_2\in\mathcal{D}^\nu_{k_2-1}}\sum_{\epsilon_1\in\Gamma_n}\sum_{\epsilon_2\in\Gamma_n}\nonumber\\ \nonumber\\
&\hspace{1.0cm}m_\lambda(Q)^{-1}m_\lambda(\hat{Q})^{-1}\int_{I_1\cap Q}\int_{I_2\cap \hat{Q}}K_{\lambda,\ell}(z,\,w)^{-1}h_{I_1}^{\epsilon_1}(z)h_{I_2}^{\epsilon_2}(w)dm_\lambda(w)dm_\lambda(z)h_{I_1}^{\epsilon_1}(x)h_{I_2}^{\epsilon_2}(y)\nonumber\\
&=:\sum_{u=1}^4{\rm I}_u,
\end{align}
where we choose $k_Q$ to be the integer such that $\delta^{k_Q}\leq \ell(Q)<\delta^{k_Q-1}$. Next we deal with these four terms separately.

For the term ${\rm I}_1$,  we note that for any integer $k_1\geq k_Q+1 $, there are at most $\delta^{-(k_1-k_Q)(2\lambda+n+1)}$ cubes (up to an absolute constant)  $I\in\mathcal{D}^\nu_{k_1-1}$ such that $I\subset Q$. We denote these cubes by $I_{1,k_1}^{1},\ldots,I_{1,k_1}^{a_{k_1-k_Q,\nu,Q}}$ with $a_{k_1-k_Q,\nu,Q}\lesssim \delta^{-(k_1-k_Q)(2\lambda+n+1)}$. Moreover, for any integer $k_2\geq k_Q+1 $, there are at most $\delta^{-(k_2-k_Q)(2\lambda+n+1)}$ cubes (up to an absolute constant) $I\in\mathcal{D}^\nu_{k_2-1}$ such that $I\subset\hat{Q}$. These cubes are denoted by $I_{2,k_2}^{1},\ldots,I_{2,k_2}^{b_{k_2-k_Q,\nu,Q}}$ with $b_{k_2-k_Q,\nu,Q}\lesssim \delta^{-(k_2-k_Q)(2\lambda+n+1)}$. Then
\begin{align*}
{\rm I}_1
&=\sum_{k_1=k_Q+1}^{+\infty}\sum_{k_2=k_Q+1}^{+\infty}\sum_{j_1=1}^{a_{k_1-k_Q,\nu,Q}}\sum_{j_2=1}^{b_{k_2-k_Q,\nu,Q}}\sum_{\epsilon_1\in\Gamma_n}\sum_{\epsilon_2\in\Gamma_n}m_\lambda(Q)^{-1}m_\lambda(\hat{Q})^{-1}\times\\
&\hspace{1.0cm}\times\int_{I_{1,k_1}^{j_1}}\int_{I_{2,k_2}^{j_2}}K_{\lambda,\ell}(z,\,w)^{-1}h_{I_{1,k_1}^{j_1}}^{\epsilon_1}(z)h_{I_{2,k_2}^{j_2}}^{\epsilon_2}(w)dm_\lambda(w)dm_\lambda(z)h_{I_{1,k_1}^{j_1}}^{\epsilon_1}(x)h_{I_{2,k_2}^{j_2}}^{\epsilon_2}(y).
\end{align*}
For any integer $k_1\geq k_Q+1 $ and any integer $k_2\geq k_Q+1$, we let $$f_{Q,k_Q-k_1,k_Q-k_2}^{\epsilon_1,\epsilon_2,j_1,j_2}(x):=\left(\frac{m_\lambda(I_{1,k_1}^{j_1})}{m_\lambda(Q)}\right)^{1/2}h_{I_{1,k_1}^{j_1}}^{\epsilon_1}(x),\ \ g_{Q,k_Q-k_1,k_Q-k_2}^{\epsilon_1,\epsilon_2,j_1,j_2}(y):=\left(\frac{m_\lambda(I_{2,k_2}^{j_2})}{m_\lambda(Q)}\right)^{1/2}h_{I_{2,k_2}^{j_2}}^{\epsilon_2}(y)$$ and
\begin{align*}
C_{Q,k_Q-k_1,k_Q-k_2}^{\epsilon_1,\epsilon_2,j_1,j_2}:&=m_\lambda(Q)^{-1}m_\lambda(\hat{Q})^{-1}\left(\frac{m_\lambda(Q)}{m_{\lambda}(I_{1,k_1}^{j_1})}\right)^{1/2}\left(\frac{m_\lambda(Q)}{m_\lambda(I_{2,k_2}^{j_2})}\right)^{1/2}\times\\
&\hspace{2.0cm}\times\int_{I_{1,k_1}^{j_1}}\int_{I_{2,k_2}^{j_2}}K_{\lambda,\ell}(z,\,w)^{-1}h_{I_{1,k_1}^{j_1}}^{\epsilon_1}(z)h_{I_{2,k_2}^{j_2}}^{\epsilon_2}(w)dm_\lambda(w)dm_\lambda(z).
\end{align*}
Then, it is direct to see that $\supp f_{Q,k_Q-k_1,k_Q-k_2}^{\epsilon_1,\epsilon_2,j_1,j_2}\subset Q$, $\supp g_{Q,k_Q-k_1,k_Q-k_2}^{\epsilon_1,\epsilon_2,j_1,j_2}\subset \hat{Q}$ and $\|f_{Q,k_Q-k_1,k_Q-k_2}^{\epsilon_1,\epsilon_2,j_1,j_2}\|_{\infty}+\|g_{Q,k_Q-k_1,k_Q-k_2}^{\epsilon_1,\epsilon_2,j_1,j_2}\|_{\infty}\leq m_\lambda(Q)^{-1/2}$. Moreover, we apply Taylor's formula twice to deduce that there are points $\xi_{z,j_1,k_1}$ and $\eta_{w,k_2,j_2}$ lying in the line segments jointing $z$ with $c_{I_{1,k_1}^{j_1}}$ and jointing $w$ with $c_{I_{2,k_2}^{j_2}}$, respectively, such that
\begin{align*}
K_{\lambda,\ell}(z,\,w)^{-1}
&=\sum_{|\beta|\leq \mathcal{K}-1}\frac{\partial_z^{\beta}[K_{\lambda,\ell}(c_{I_{1,k_1}^{j_1}},\,w)^{-1}]}{\beta!}(z-c_{I_{1,k_1}^{j_1}})^{\beta}\\
&\hspace{1.0cm}+\sum_{|\beta|=\mathcal{K}}\sum_{|\gamma|\leq \mathcal{K}-1}\frac{\partial_z^\beta\partial_w^{\gamma}[K_{\lambda,\ell}(\xi_{z,j_1,k_1},\,c_{I_{2,k_2}^{j_2}})^{-1}]}{\beta!\gamma!}(z-c_{I_{1,k_1}^{j_1}})^{\beta}(w-c_{I_{2,k_2}^{j_2}})^\gamma\\
&\hspace{1.0cm}+\sum_{|\beta|=\mathcal{K} }\sum_{|\gamma|=\mathcal{K} }\frac{\partial_z^\beta \partial_w^\gamma [K_{\lambda,\ell}(\xi_{z,j_1,k_1},\,\eta_{w,j_2,k_2})^{-1}]}{\beta!\gamma!}(z-c_{I_{1,k_1}^{j_1}})^{\beta}(w-c_{I_{2,k_2}^{j_2}})^{\gamma}\\
&=:\mathcal{H}_{\lambda,\ell,k_1,k_2,j_1,j_2}^{\beta,\gamma,\mathcal{K}}(z,\,w)\\
&\hspace{1.0cm}+\sum_{|\beta|=\mathcal{K} }\sum_{|\gamma|=\mathcal{K} }\frac{\partial_z^\beta \partial_w^\gamma [K_{\lambda,\ell}(\xi_{z,j_1,k_1},\,\eta_{w,j_2,k_2})]^{-1}}{\beta!\gamma!}(z-c_{I_{1,k_1}^{j_1}})^{\beta}(w-c_{I_{2,k_2}^{j_2}})^{\gamma}.
\end{align*}
Next, we apply Lemmas \ref{AW}, \ref{CZO} and \ref{sign} to deduce that
\begin{align*}
&\bigg|\int_{I_{1,k_1}^{j_1}}\int_{I_{2,k_2}^{j_2}}K_{\lambda,\ell}(z,\,w)^{-1}h_{I_{1,k_1}^{j_1}}^{\epsilon_1}(z)h_{I_{2,k_2}^{j_2}}^{\epsilon_2}(w)dm_\lambda(w)dm_\lambda(z)\bigg|\\
&=\bigg|\int_{I_{1,k_1}^{j_1}}\int_{I_{2,k_2}^{j_2}}\big(K_{\lambda,\ell}(z,\,w)^{-1}-\mathcal{H}_{\lambda,\ell,k_1,k_2,j_1,j_2}^{\beta,\gamma,\mathcal{K}}(z,\,w)\big) h_{I_{1,k_1}^{j_1}}^{\epsilon_1}(z)h_{I_{2,k_2}^{j_2}}^{\epsilon_2}(w)dm_\lambda(w)dm_\lambda(z)\bigg|\\
&=\bigg|\int_{I_{1,k_1}^{j_1}}\int_{I_{2,k_2}^{j_2}} \sum_{|\beta|=\mathcal{K} }\sum_{|\gamma|=\mathcal{K} }\frac{\partial_z^\beta \partial_w^\gamma [K_{\lambda,\ell}(\xi_{z,j_1,k_1},\,\eta_{w,j_2,k_2})^{-1}]}{\beta!\gamma!}\times\\
&\hspace{3.9cm}\times(z-c_{I_{1,k_1}^{j_1}})^{\beta}(w-c_{I_{2,k_2}^{j_2}})^{\gamma} h_{I_{1,k_1}^{j_1}}^{\epsilon_1}(z)h_{I_{2,k_2}^{j_2}}^{\epsilon_2}(w)dm_\lambda(w)dm_\lambda(z)\bigg|\\
&\lesssim \delta^{(k_1-k_Q)\mathcal{K}}\delta^{(k_2-k_Q)\mathcal{K}}\left(\frac{m_\lambda(I_{1,k_1}^{j_1})}{m_\lambda(Q)}\right)^{1/2}\left(\frac{m_\lambda(I_{2,k_2}^{j_2})}{m_\lambda(Q)}\right)^{1/2}m_\lambda(Q)m_\lambda(\hat{Q}).
\end{align*}
That is,
\begin{align*}
\Big|C_{Q,k_Q-k_1,k_Q-k_2}^{\epsilon_1,\epsilon_2,j_1,j_2}\Big|\lesssim \delta^{(k_1-k_Q)\mathcal{K}}\delta^{(k_2-k_Q)\mathcal{K}}.
\end{align*}
Furthermore, a change of variable yields that $${\rm I}_1=\sum_{k_1\leq -1}\sum_{k_2\leq -1}\sum_{j_1=1}^{a_{-k_1,\nu,Q}}\sum_{j_2=1}^{b_{-k_2,\nu,Q}}\sum_{\epsilon_1\in\Gamma_n}\sum_{\epsilon_2\in\Gamma_n}C_{Q,k_1,k_2}^{\epsilon_1,\epsilon_2,j_1,j_2}f_{Q,k_1,k_2}^{\epsilon_1,\epsilon_2,j_1,j_2}(x)g_{Q,k_1,k_2}^{\epsilon_1,\epsilon_2,j_1,j_2}(y).$$

For the term ${\rm I}_2$, we note that for any integer $k\leq k_Q$, there is a unique $I\in\mathcal{D}^\nu_{k-1}$ such that $I\cap Q\neq\emptyset$. We denote this cube by $Q_{k-1}$. Then
\begin{align*}
{\rm I}_2
&=\sum_{k_1=-\infty}^{k_Q}\sum_{k_2=k_Q+1}^{+\infty}\sum_{j_2=1}^{b_{k_2-k_Q,\nu,Q}}\sum_{\epsilon_1\in\Gamma_n}\sum_{\epsilon_2\in\Gamma_n}m_\lambda(Q)^{-1}m_\lambda(\hat{Q})^{-1}\times\\
&\hspace{1.0cm}\times\int_{Q}\int_{I_{2,k_2}^{j_2}}K_{\lambda,\ell}(z,\,w)^{-1}h_{Q_{k_1-1}}^{\epsilon_1}(z)h_{I_{2,k_2}^{j_2}}^{\epsilon_2}(w)dm_\lambda(w)dm_\lambda(z)h_{Q_{k_1-1}}^{\epsilon_1}(x)\chi_{Q}(x)h_{I_{2,k_2}^{j_2}}^{\epsilon_2}(y).
\end{align*}
For any integer $k_1\leq k_{Q} $ and any integer $k_2\geq k_Q+1$, we let $$f_{Q,k_Q-k_1,k_Q-k_2}^{\epsilon_1,\epsilon_2,0,j_2}(x):=\left(\frac{m_{\lambda}(Q_{k_1-1})}{m_\lambda(Q)}\right)^{1/2}h_{Q_{k_1-1}}^{\epsilon_1}(x)\chi_{Q}(x),\ \  g_{Q,k_Q-k_1,k_Q-k_2}^{\epsilon_1,\epsilon_2,0,j_2}(y):=\left(\frac{m_\lambda(I_{2,k_2}^{j_2})}{m_\lambda(Q)}\right)^{1/2}h_{I_{2,k_2}^{j_2}}^{\epsilon_2}(y)$$ and \begin{align*}
C_{Q,k_Q-k_1,k_Q-k_2}^{\epsilon_1,\epsilon_2,0,j_2}:&=m_\lambda(Q)^{-1}m_\lambda(\hat{Q})^{-1}\left(\frac{m_\lambda(Q)}{m_{\lambda}(Q_{k_1-1})}\right)^{1/2}\left(\frac{m_\lambda(Q)}{m_\lambda(I_{2,k_2}^{j_2})}\right)^{1/2}\times\\
&\hspace{1.0cm}\times\int_{Q}\int_{I_{2,k_2}^{j_2}}K_{\lambda,\ell}(z,\,w)^{-1}h_{Q_{k_1-1}}^{\epsilon_1}(z)h_{I_{2,k_2}^{j_2}}^{\epsilon_2}(w)dm_\lambda(w)dm_\lambda(z).\end{align*}
Then, it is direct to see that $\supp f_{Q,k_Q-k_1,k_Q-k_2}^{\epsilon_1,\epsilon_2,0,j_2}\subset Q$, $\supp g_{Q,k_Q-k_1,k_Q-k_2}^{\epsilon_1,\epsilon_2,0,j_2}\subset \hat{Q}$ and $\|f_{Q,k_Q-k_1,k_Q-k_2}^{\epsilon_1,\epsilon_2,0,j_2}\|_{\infty}+\|g_{Q,k_Q-k_1,k_Q-k_2}^{\epsilon_1,\epsilon_2,0,j_2}\|_{\infty}\leq m_\lambda(Q)^{-1/2}$. Moreover, we apply Taylor's formula with respect to the $w$-variable at $c_{I_{2,k_2}^{j_2}}$ to deduce that there is a point $\eta_{w,k_2,j_2}$ lying in the line segment jointing $w$ with $c_{I_{2,k_2}^{j_2}}$ such that
$$K_{\lambda,\ell}(z,\,w)^{-1}=\sum_{|\gamma|\leq \mathcal{K}-1}\frac{\partial_w^\gamma [K_{\lambda,\ell}(z,\,c_{I_{2,k_2}^{j_2}})^{-1}]}{\gamma!}(w-c_{I_{2,k_2}^{j_2}})^\gamma+\sum_{|\gamma|= \mathcal{K}}\frac{\partial_w^\gamma [K_{\lambda,\ell}(z,\,\eta_{w,k_2,j_2})^{-1}]}{\gamma!}(w-c_{I_{2,k_2}^{j_2}})^\gamma.$$
Then we  apply Lemmas \ref{AW}, \ref{CZO} and \ref{sign} to deduce that
\begin{align*}
&\left|\int_{Q}\int_{I_{2,k_2}^{j_2}}K_{\lambda,\ell}(z,\,w)^{-1}h_{Q_{k_1-1}}^{\epsilon_1}(z)h_{I_{2,k_2}^{j_2}}^{\epsilon_2}(w)dm_\lambda(w)dm_\lambda(z)\right|\\
&=\left|\int_{Q}\int_{I_{2,k_2}^{j_2}}\left(K_{\lambda,\ell}(z,\,w)^{-1}-\sum_{|\gamma|\leq \mathcal{K}-1}\frac{\partial_w^\gamma [K_{\lambda,\ell}(z,\,c_{I_{2,k_2}^{j_2}})^{-1}]}{\gamma!}(w-c_{I_{2,k_2}^{j_2}})^\gamma\right)h_{Q_{k_1-1}}^{\epsilon_1}(z)h_{I_{2,k_2}^{j_2}}^{\epsilon_2}(w)dm_\lambda(w)dm_\lambda(z)\right|\\
&=\left|\int_{Q}\int_{I_{2,k_2}^{j_2}}\sum_{|\gamma|= \mathcal{K}}\frac{\partial_w^\gamma [K_{\lambda,\ell}(z,\,\eta_{w,k_2,j_2})^{-1}]}{\gamma!}(w-c_{I_{2,k_2}^{j_2}})^\gamma h_{Q_{k_1-1}}^{\epsilon_1}(z)h_{I_{2,k_2}^{j_2}}^{\epsilon_2}(w)dm_\lambda(w)dm_\lambda(z)\right|\\
&\lesssim \delta^{(k_2-k_Q)\mathcal{K}}\left(\frac{m_\lambda(I_{2,k_2}^{j_2})}{m_\lambda(Q_{k_1-1})}\right)^{1/2}m_\lambda(Q)m_\lambda(\hat{Q}).
\end{align*}
This, in combination with inequality \eqref{doub}, yields
$$
\Big|C_{Q,k_Q-k_1,k_Q-k_2}^{\epsilon_1,\epsilon_2,0,j_2}\Big|\lesssim \delta^{(k_2-k_Q)\mathcal{K}}\frac{m_\lambda(Q)}{m_{\lambda}(Q_{k_1-1})}\lesssim \delta^{(k_Q-k_1)(n+1)} \delta^{(k_2-k_Q)\mathcal{K}}.
$$
Furthermore, a change of variable yields that $${\rm I}_2=\sum_{k_1\geq 0}\sum_{k_2\leq -1}\sum_{j_2=1}^{b_{-k_2,\nu,Q}}\sum_{\epsilon_1\in\Gamma_n}\sum_{\epsilon_2\in\Gamma_n}C_{Q,k_1,k_2}^{\epsilon_1,\epsilon_2,0,j_2} f_{Q,k_1,k_2}^{\epsilon_1,\epsilon_2,0,j_2}(x)g_{Q,k_1,k_2}^{\epsilon_1,\epsilon_2,0,j_2}(y).$$

For the term ${\rm I}_3$,  we note that for any integer $k\leq k_Q$, there is a unique $I\in\mathcal{D}^\nu_{k-1}$ such that $I\cap \hat{Q}\neq\emptyset$. We denote this cube by $\hat{Q}_{k-1}$. Then
\begin{align*}
{\rm I}_3
&=\sum_{k_1=k_Q+1}^{+\infty}\sum_{k_2=-\infty}^{k_Q}\sum_{j_1=1}^{a_{k_1-k_Q,\nu,Q}}\sum_{\epsilon_1\in\Gamma_n}\sum_{\epsilon_2\in\Gamma_n}m_\lambda(Q)^{-1}m_\lambda(\hat{Q})^{-1}\times\\
&\hspace{1.0cm}\times\int_{I_{1,k_1}^{j_1}}\int_{\hat{Q}}K_{\lambda,\ell}(z,\,w)^{-1}h_{I_{1,k_1}^{j_1}}^{\epsilon_1}(z)h_{\hat{Q}_{k_2-1}}^{\epsilon_2}(w)dm_\lambda(w)dm_\lambda(z)h_{I_{1,k_1}^{j_1}}^{\epsilon_1}(x)h_{\hat{Q}_{k_2-1}}^{\epsilon_2}(y)\chi_{\hat{Q}}(y).
\end{align*}
For any integer $k_1\geq k_{Q}+1 $ and any integer $k_2\leq k_Q$, we let $$f_{Q,k_Q-k_1,k_Q-k_2}^{\epsilon_1,\epsilon_2,j_1,0}(x):=\left(\frac{m_\lambda(I_{1,k_1}^{j_1})}{m_\lambda(Q)}\right)^{1/2}h_{I_{1,k_1}^{j_1}}^{\epsilon_1}(x),\ \  g_{Q,k_Q-k_1,k_Q-k_2}^{\epsilon_1,\epsilon_2,j_1,0}(y):=\left(\frac{m_{\lambda}(\hat{Q}_{k_2-1})}{m_\lambda(Q)}\right)^{1/2}h_{\hat{Q}_{k_2-1}}^{\epsilon_2}(y)\chi_{\hat{Q}}(y)$$ and \begin{align*}
C_{Q,k_Q-k_1,k_Q-k_2}^{\epsilon_1,\epsilon_2,j_1,0}:&=m_\lambda(Q)^{-1}m_\lambda(\hat{Q})^{-1}\left(\frac{m_\lambda(Q)}{m_{\lambda}(\hat{Q}_{k_2-1})}\right)^{1/2}\left(\frac{m_\lambda(Q)}{m_\lambda(I_{1,k_1}^{j_1})}\right)^{1/2}\times\\
&\hspace{1.0cm}\times \int_{I_{1,k_1}^{j_1}}\int_{\hat{Q}}K_{\lambda,\ell}(z,\,w)^{-1}h_{I_{1,k_1}^{j_1}}^{\epsilon_1}(z)h_{\hat{Q}_{k_2-1}}^{\epsilon_2}(w)dm_\lambda(w)dm_\lambda(z).
\end{align*}
Then, it is direct to see that $\supp f_{Q,k_Q-k_1,k_Q-k_2}^{\epsilon_1,\epsilon_2,j_1,0}\subset Q$, $\supp g_{Q,k_Q-k_1,k_Q-k_2}^{\epsilon_1,\epsilon_2,j_1,0}\subset \hat{Q}$ and $\|f_{Q,k_Q-k_1,k_Q-k_2}^{\epsilon_1,\epsilon_2,j_1,0}\|_{\infty}+\|g_{Q,k_Q-k_1,k_Q-k_2}^{\epsilon_1,\epsilon_2,j_1,0}\|_{\infty}\leq m_\lambda(Q)^{-1/2}$. Moreover,  we apply Taylor's formula with respect to the $z$-variable at $c_{I_{1,k_1}^{j_1}}$ to deduce that there is a point $\xi_{z,k_1,j_1}$ lying in the line segment jointing $z$ with $c_{I_{1,k_1}^{j_1}}$ such that
$$K_{\lambda,\ell}(z,\,w)^{-1}=\sum_{|\beta|\leq \mathcal{K}-1}\frac{\partial_z^\beta [K_{\lambda,\ell}(c_{I_{1,k_1}^{j_1}},\,w)^{-1}]}{\beta!}(z-c_{I_{1,k_1}^{j_1}})^\beta+\sum_{|\beta|= \mathcal{K}}\frac{\partial_z^\beta [K_{\lambda,\ell}(\xi_{z,k_1,j_1},\,w)^{-1}]}{\gamma!}(z-c_{I_{1,k_1}^{j_1}})^\beta.$$
Then we  apply Lemmas \ref{AW}, \ref{CZO} and \ref{sign} to deduce that
\begin{align*}
&\bigg|\int_{I_{1,k_1}^{j_1}}\int_{\hat{Q}}K_{\lambda,\ell}(z,\,w)^{-1}h_{I_{1,k_1}^{j_1}}^{\epsilon_1}(z)h_{\hat{Q}_{k_2-1}}^{\epsilon_2}(w)dm_\lambda(w)dm_\lambda(z)\bigg|\\
&=\bigg|\int_{I_{1,k_1}^{j_1}}\int_{\hat{Q}}\bigg(K_{\lambda,\ell}(z,\,w)^{-1}-\sum_{|\beta|\leq \mathcal{K}-1}\frac{\partial_z^\beta [K_{\lambda,\ell}(c_{I_{1,k_1}^{j_1}},\,w)^{-1}]}{\beta!}(z-c_{I_{1,k_1}^{j_1}})^\beta\bigg)\times \\
&\hspace{7.6cm}\times h_{I_{1,k_1}^{j_1}}^{\epsilon_1}(z)h_{\hat{Q}_{k_2-1}}^{\epsilon_2}(w)dm_\lambda(w)dm_\lambda(z)\bigg|\\
&=\bigg|\int_{I_{1,k_1}^{j_1}}\int_{\hat{Q}}\sum_{|\beta|= \mathcal{K}}\frac{\partial_z^\beta [K_{\lambda,\ell}(\xi_{z,k_1,j_1},\,w)^{-1}]}{\gamma!}(z-c_{I_{1,k_1}^{j_1}})^\beta h_{I_{1,k_1}^{j_1}}^{\epsilon_1}(z)h_{\hat{Q}_{k_2-1}}^{\epsilon_2}(w)dm_\lambda(w)dm_\lambda(z)\bigg|\\
&\lesssim \delta^{(k_1-k_Q)\mathcal{K}}\bigg(\frac{m_\lambda(I_{1,k_1}^{j_1})}{m_\lambda(\hat{Q}_{k_2-1})}\bigg)^{1/2}m_\lambda(Q)m_\lambda(\hat{Q}).
\end{align*}
This, in combination with inequality \eqref{doub}, yields
$$
\Big|C_{Q,k_Q-k_1,k_Q-k_2}^{\epsilon_1,\epsilon_2,j_1,0}\Big|\lesssim \delta^{(k_1-k_Q)\mathcal{K}}\frac{m_\lambda(Q)}{m_{\lambda}(\hat{Q}_{k_2-1})}\lesssim \delta^{(k_1-k_Q)\mathcal{K}}\delta^{(k_Q-k_2)(n+1)}.
$$
Furthermore, a change of variable yields that $${\rm I}_3=\sum_{k_1\leq -1}\sum_{k_2\geq 0}\sum_{j_1=1}^{a_{-k_1,\nu,Q}}\sum_{\epsilon_1\in\Gamma_n}\sum_{\epsilon_2\in\Gamma_n}C_{Q,k_1,k_2}^{\epsilon_1,\epsilon_2,j_1,0} f_{Q,k_1,k_2}^{\epsilon_1,\epsilon_2,j_1,0}(x)g_{Q,k_1,k_2}^{\epsilon_1,\epsilon_2,j_1,0}(y).$$

For the term ${\rm I}_4$,  we see that
\begin{align*}
{\rm I}_4
&=\sum_{k_1=-\infty}^{k_Q}\sum_{k_2=-\infty}^{k_Q}\sum_{\epsilon_1\in\Gamma_n}\sum_{\epsilon_2\in\Gamma_n}m_\lambda(Q)^{-1}m_\lambda(\hat{Q})^{-1}\times\\
&\hspace{1.0cm}\times\int_{Q}\int_{\hat{Q}}K_{\lambda,\ell}(z,\,w)^{-1}h_{Q_{k_1-1}}^{\epsilon_1}(z)h_{\hat{Q}_{k_2-1}}^{\epsilon_2}(w)dm_\lambda(w)dm_\lambda(z)h_{Q_{k_1-1}}^{\epsilon_1}(x)\chi_{Q}(x)h_{\hat{Q}_{k_2-1}}^{\epsilon_2}(y)\chi_{\hat{Q}}(y).
\end{align*}
For any integer $k_1\leq k_Q $ and any integer $k_2\leq k_Q$, we let $$f_{Q,k_Q-k_1,k_Q-k_2}^{\epsilon_1,\epsilon_2,0,0}(x):=\Big(\frac{m_{\lambda}(Q_{k_1-1})}{m_\lambda(Q)}\Big)^{1/2}h_{Q_{k_1-1}}^{\epsilon_1}(x)\chi_{Q}(x),\ \ g_{Q,k_Q-k_1,k_Q-k_2}^{\epsilon_1,\epsilon_2,0,0}(y):=\Big(\frac{m_{\lambda}(\hat{Q}_{k_2-1})}{m_\lambda(Q)}\Big)^{1/2}h_{\hat{Q}_{k_2-1}}^{\epsilon_2}(y)\chi_{\hat{Q}}(y)$$ and
\begin{align*}
C_{Q,k_Q-k_1,k_Q-k_2}^{\epsilon_1,\epsilon_2,0,0}:&=m_\lambda(Q)^{-1}m_\lambda(\hat{Q})^{-1}\left(\frac{m_\lambda(Q)}{m_{\lambda}(Q_{k_1-1})}\right)^{1/2}\left(\frac{m_\lambda(Q)}{m_\lambda(\hat{Q}_{k_2-1})}\right)^{1/2}\times\\
&\hspace{1.0cm}\times \int_{Q}\int_{\hat{Q}}K_{\lambda,\ell}(z,\,w)^{-1}h_{Q_{k_1-1}}^{\epsilon_1}(z)h_{\hat{Q}_{k_2-1}}^{\epsilon_2}(w)dm_\lambda(w)dm_\lambda(z).
\end{align*}
Then, it is direct to see that $\supp f_{Q,k_Q-k_1,k_Q-k_2}^{\epsilon_1,\epsilon_2,0,0}\subset Q$, $\supp g_{Q,k_Q-k_1,k_Q-k_2}^{\epsilon_1,\epsilon_2,0,0}\subset \hat{Q}$ and $\|f_{Q,k_Q-k_1,k_Q-k_2}^{\epsilon_1,\epsilon_2,0,0}\|_{\infty}+\|g_{Q,k_Q-k_1,k_Q-k_2}^{\epsilon_1,\epsilon_2,0,0}\|_{\infty}\leq m_\lambda(Q)^{-1/2}$. Moreover, it follows from Lemmas \ref{CZO}, \ref{sign} and inequality \eqref{doub} that
\begin{align*}
|C_{Q,k_Q-k_1,k_Q-k_2}^{\epsilon_1,\epsilon_2,0,0}|&\lesssim \left(\frac{m_\lambda(Q)}{m_\lambda(Q_{k_1-1})}\right)\left(\frac{m_\lambda(Q)}{m_\lambda(Q_{k_2-1})}\right)\lesssim \delta^{(k_Q-k_1)(n+1)}\delta^{(k_Q-k_2)(n+1)}.
\end{align*}
Furthermore, a change of variable yields that $${\rm I}_4=\sum_{\epsilon_1\in\Gamma_n}\sum_{\epsilon_2\in\Gamma_n}\sum_{k_1\geq0}\sum_{k_2\geq0}C_{Q,k_1,k_2}^{\epsilon_1,\epsilon_2,0,0}f_{Q,k_1,k_2}^{\epsilon_1,\epsilon_2,0,0}(x)g_{Q,k_1,k_2}^{\epsilon_1,\epsilon_2,0,0}(y).$$

Finally, by relabeling $\nu=(\epsilon_1,\,\epsilon_2)$ and $k=(k_1,\,k_2,\,j_1,\,j_2)$, setting $\mathcal{I}=\Gamma_n\times \Gamma_n$ and setting some of the $C'$, $f'$, $g'$ to be the zero functions for notational simplicity, we can see that $J_Q$ can be rewritten as the form \eqref{reform} with the coefficient satisfying the required estimates (the reason why we rewrite the sum as this form is only for simplicity). For example, when $k_1\geq 0$ and $k_2\leq -1$, we set $C_{Q,k,v}=f_{Q,k,v}=g_{Q,k,v}=0$ if $j_1\neq 0$ or $j_2\leq 0$ or $j_2\geq b_{-k_2,\nu,Q}$. In this case, if we choose $\mathcal{K}>(n+1)N$, then
\begin{align*}
|C_{Q,k,v}|&\lesssim 2^{-k_1(n+1)} 2^{k_2\mathcal{K}}\chi_{[1,b_{-k_2,\nu,Q}]}(j_2)\chi_{\{0\}}(j_1)\\
&\leq C_N(1+|k_1|)^{-N}(1+|j_1|)^{-N}(1+|j_2|)^{-N}2^{k_2(\mathcal{K}-(n+1)N)}\\&\leq C_N (1+|k|)^{-N}.
\end{align*}
The proof of Lemma \ref{jq} is complete.
\end{proof}
The main result of this subsection is the following lower bound for  Schatten--Lorentz norm of Riesz transform commutator.
\begin{proposition}\label{weakpro}
Suppose $1<p<\infty$, $1\leq q\leq \infty$ and $b\in L^p_{\rm loc}(\mathbb{R}^{n+1}_+,dm_\lambda)$. Then there is a constant $C>0$ such that for any $\ell\in\{1,2,...,n+1\}$,
\begin{align*}
\|b\|_{{\rm OSC}_{p,q}(\mathbb{R}_+^{n+1},dm_\lambda)}\leq C\|[b,R_{\lambda,\ell}]\|_{S_\lambda^{p,q}}.
\end{align*}

\end{proposition}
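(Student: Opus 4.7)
The plan is to invert the Riesz transform kernel on the paired cubes $(Q,\hat{Q})$ produced by Lemma \ref{sign}, use this to realise $MO_Q(b)$ as a matrix entry of $[b,R_{\lambda,\ell}]$, and then apply the dual of the NWO bound \eqref{nwose}. First I would fix $\nu\in\{1,\ldots,\kappa\}$ and, for each $Q\in\mathcal{D}^\nu$, let $\hat{Q}$ be the paired cube from Lemma \ref{sign}. From the trivial inequality $MO_Q(b)\leq 2\fint_Q|b(x)-(b)_{\hat{Q}}|\,dm_\lambda(x)$ together with a measurable unimodular function $\varepsilon_Q(x)$ chosen so that $\varepsilon_Q(x)(b(x)-(b)_{\hat{Q}})=|b(x)-(b)_{\hat{Q}}|$, the majorant equals $2\fint_Q\fint_{\hat{Q}}\varepsilon_Q(x)(b(x)-b(y))\,dm_\lambda(y)\,dm_\lambda(x)$.

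Since on $Q\times\hat{Q}$ the kernel $K_{\lambda,\ell}(x,y)$ is non-vanishing and has a definite sign by Lemma \ref{sign}, multiplying and dividing by it and recognising the kernel of the commutator produces
\[
MO_Q(b)\lesssim\Bigl|\iint K_{[b,R_{\lambda,\ell}]}(x,y)\,\varepsilon_Q(x)\,J_Q(x,y)\,dm_\lambda(x)\,dm_\lambda(y)\Bigr|,
\]
where $J_Q$ is precisely the function decomposed in Lemma \ref{jq}. Substituting the expansion $J_Q(x,y)=\sum_{u\in\mathcal{I}}\sum_{k\in\mathbb{Z}^4}C_{Q,k,u}f_{Q,k,u}(x)g_{Q,k,u}(y)$ then yields the pointwise estimate
\[
MO_Q(b)\lesssim\sum_{u\in\mathcal{I}}\sum_{k\in\mathbb{Z}^4}|C_{Q,k,u}|\,\bigl|\bigl\langle [b,R_{\lambda,\ell}]\,g_{Q,k,u},\,\varepsilon_Q f_{Q,k,u}\bigr\rangle\bigr|.
\]

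For each fixed pair $(u,k)$, the families $\{\varepsilon_Q f_{Q,k,u}\}_{Q\in\mathcal{D}^\nu}$ and $\{g_{Q,k,u}\}_{Q\in\mathcal{D}^\nu}$ satisfy the support and $L^\infty$ bounds of Section \ref{nwooo}, and I would invoke the converse of \eqref{nwose} (the standard dual direction of Rochberg--Semmes for NWO pairings) to obtain $\|\{\langle[b,R_{\lambda,\ell}]g_{Q,k,u},\varepsilon_Q f_{Q,k,u}\rangle\}_{Q\in\mathcal{D}^\nu}\|_{\ell^{p,q}}\lesssim\|[b,R_{\lambda,\ell}]\|_{S_\lambda^{p,q}}$ uniformly in $(u,k)$. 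Taking $\ell^{p,q}$-norms of the previous display (the space $\ell^{p,q}$ is normable up to constants for $1<p<\infty$, $1\leq q\leq\infty$, so the triangle inequality applies) and using the uniform decay $|C_{Q,k,u}|\leq C_N(1+|k|)^{-N}$ with $N$ taken larger than $4$, the $u,k$ sums converge and I would conclude $\|\{MO_Q(b)\}_{Q\in\mathcal{D}^\nu}\|_{\ell^{p,q}}\lesssim\|[b,R_{\lambda,\ell}]\|_{S_\lambda^{p,q}}$. Summing over $\nu\in\{1,\ldots,\kappa\}$ then finishes the proposition.

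The hardest step I anticipate is verifying that $\{g_{Q,k,u}\}_{Q\in\mathcal{D}^\nu}$ forms an NWO family despite being supported in the displaced cubes $\hat{Q}$ rather than in $Q$ itself. I expect to handle this by splitting $\{g_{Q,k,u}\}_Q$ into boundedly many subfamilies according to the uniformly bounded multiplicity of the map $Q\mapsto\hat{Q}$ (a direct consequence of the distance estimate $c_1\delta^k\leq\operatorname{dist}(Q,\hat{Q})\leq c_2\delta^k$ in Lemma \ref{sign}); the doubling condition \eqref{doub}, which gives $m_\lambda(\hat{Q})\sim m_\lambda(Q)$, then transfers the quantitative NWO bounds required by Section \ref{nwooo} to these displaced supports.
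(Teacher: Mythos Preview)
Your proposal is correct and follows essentially the same route as the paper: both extract $MO_Q(b)$ as the trace $\fint_Q\fint_{\hat{Q}}(b(x)-b(y))\varepsilon_Q(x)\,dm_\lambda(y)\,dm_\lambda(x)$ via the inverted kernel $J_Q$ of Lemma~\ref{jq}, and both finish with the NWO machinery. The only cosmetic difference is that the paper carries out the ``converse NWO bound'' explicitly by $\ell^{p',q'}$--$\ell^{p,q}$ duality (writing $\|\{MO_Q(b)\}\|_{\ell^{p,q}}$ as a supremum over test sequences $\{a_Q\}$, passing to $\mathrm{Trace}\bigl([b,R_{\lambda,\ell}]\sum_Q a_QL_Q\bigr)$, and applying the forward estimate \eqref{nwose} to $\sum_Q a_QL_Q$), whereas you invoke that converse as a black box; the two are the same argument in different packaging, and your treatment of the $Q\mapsto\hat{Q}$ reindexing issue is exactly what is needed (the paper leaves this implicit).
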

\begin{proof}
By Lemma \ref{sign}, Then there exist constants  $c_1,c_2>0$ such that for any $\nu\in\{1,2,\ldots,\kappa\}$ and $Q\in \ \mathcal{D}^\nu_{k}$ with center $c_Q$, one can find a cube $\hat{Q}\in\mathcal{D}_k^\nu$ such that $c_1\delta^k\leq {\rm dist}(Q,\,\hat{Q})\leq c_2\delta^k$, and for all $(x,\,y)\in Q\times\hat{Q}$, $K_{\lambda,\ell}(x,\,y)$ does not change sign and satisfies
\begin{align*}
|K_{\lambda,\ell}(x,\,y)|\gtrsim \frac{1}{m_\lambda(Q)}.
\end{align*}
To continue, we let $s_Q(x):={\rm sgn}(b(x)-(b)_{\hat{Q}})\chi_Q(x)$ and deduce that
\begin{align}\label{tr1}
MO_Q(b)
&\lesssim \fint_Q|b(x)-(b)_{\hat{Q}}|dm_\lambda(x)\nonumber\\
&=C\fint_Q(b(x)-(b)_{\hat{Q}})s_Q(x)dm_\lambda(x).
\end{align}
Furthermore, we let $$L_{Q}(f)(x):=\int_{\hat{Q}}s_Q(x)J_{Q}(x,\,y)f(y)dm_\lambda(y),$$ where $J_{Q}$ is given in Lemma \ref{jq}. Then a direct calculation yields that
$$[b,R_{\lambda,\ell}]L_{Q}(f)(x)=\fint_Q\fint_{\hat{Q}}(b(x)-b(z))K_{\lambda,\ell}(x,\,z)s_Q(z)K_{\lambda,\ell}(y,\,z)^{-1}f(y)dm_\lambda(y)dm_\lambda(z).$$
This implies that
\begin{align}\label{tr2}
{\rm Trace}([b,R_{\lambda,\ell}]L_{Q})=\fint_Q\fint_{\hat{Q}}(b(x)-b(y))s_Q(x)dm_\lambda(y)dm_\lambda(x).
\end{align}
Combining inequality \eqref{tr1} with equality \eqref{tr2}, we see that
\begin{align*}
MO_Q(b)\lesssim |{\rm Trace}([b,R_{\lambda,\ell}]L_{Q})|.
\end{align*}
Using the duality between Lorentz spaces $\ell^{p',q'}$ and $\ell^{p,q}$, where $1/p+1/p'=1$ and $1/q+1/q'=1$, we deduce that for any $\nu\in\{1,2,\ldots,\kappa\}$,
\begin{align}\label{uuuno}
\left\|\left\{MO_Q(b)\right\}_{Q\in\mathcal{D}^\nu}\right\|_{\ell^{p,q}}
&\lesssim \left\|\left\{{\rm Trace}([b,R_{\lambda,\ell}]L_{Q})\right\}_{Q\in\mathcal{D}^\nu}\right\|_{\ell^{p,q}}\nonumber\\
&=C\sup\limits_{\|\{a_Q\}_{Q\in\mathcal{D}^\nu}\|_{\ell^{p',q'}}=1}\Bigg|\sum_{Q\in\mathcal{D}^\nu}{\rm Trace}([b,R_{\lambda,\ell}]L_{Q})a_Q\Bigg|\nonumber\\
&=C\sup\limits_{\|\{a_Q\}_{Q\in\mathcal{D}^\nu}\|_{\ell^{p',q'}}=1}\Big\|[b,R_{\lambda,\ell}]\Big(\sum_{Q\in\mathcal{D}^\nu}a_QL_{Q}\Big)\Big\|_{S_\lambda^1}\nonumber\\
&\lesssim \|[b,R_{\lambda,\ell}]\|_{S_\lambda^{p,q}}\sup\limits_{\|\{a_Q\}_{Q\in\mathcal{D}^\nu}\|_{\ell^{p',q'}}=1}\Big\|\sum_{Q\in\mathcal{D}^\nu}a_QL_{Q}\Big\|_{S_\lambda^{p',q'}}.
\end{align}
To continue, we let $N$ be a sufficiently large constant in Lemma \ref{jq} to see that $\sum_{Q\in\mathcal{D}^\nu}a_QL_{Q}$ can be written as
\begin{align*}
\sum_{Q\in\mathcal{D}^\nu}a_QL_{Q}(f)(x)=\sum_{u\in\mathcal{I}}\sum_{k\in\mathbb{Z}^4}\sum_{Q\in\mathcal{D}^\nu}a_QC_{Q,k,u}h_{Q,k,u}(x)\langle f ,g_{Q,k,u}\rangle,
\end{align*}
where $h_{Q,k,u}(x):=s_Q(x)f_{Q,k,u}(x)$ and $g_{Q,k,u}(x)$ are two \emph{nearly weakly orthogonal  (NWO)} sequences of functions. Therefore, recall from Section \ref{nwooo} that
\begin{align*}
\sup\limits_{\|\{a_Q\}_{Q\in\mathcal{D}^\nu}\|_{\ell^{p',q'}}=1}\Big\|\sum_{Q\in\mathcal{D}^\nu}a_QL_{Q}\Big\|_{S_\lambda^{p',q'}}
&\leq \sup\limits_{\|\{a_Q\}_{Q\in\mathcal{D}^\nu}\|_{\ell^{p',q'}}=1}\sum_{u\in\mathcal{I}}\sum_{k\in\mathbb{Z}^4}\Big\|\sum_{Q\in\mathcal{D}^\nu}a_QC_{Q,k,u}h_{Q,k,u}(x)\langle f ,g_{Q,k,u}\rangle\Big\|_{S_\lambda^{p',q'}}\\
&\lesssim \sup\limits_{\|\{a_Q\}_{Q\in\mathcal{D}^\nu}\|_{\ell^{p',q'}}=1} \sum_{u\in\mathcal{I}}\sum_{k\in\mathbb{Z}^4}\|\{a_QC_{Q,k,u}\}_{Q\in\mathcal{D}^\nu}\|_{\ell^{p',q'}}\\
&\lesssim \sup\limits_{\|\{a_Q\}_{Q\in\mathcal{D}^\nu}\|_{\ell^{p',q'}}=1} \sum_{k\in\mathbb{Z}^4}(1+|k|)^{-N}\|\{a_Q\}_{Q\in\mathcal{D}^\nu}\|_{\ell^{p',q'}}\\
&\lesssim 1.
\end{align*}
Substituting the above inequality into \eqref{uuuno}, we complete the proof of Proposition \ref{weakpro}.
\end{proof}

\section{Characterizations of oscillation space: Proof of Theorems \ref{schatten} and \ref{schatten1}}\label{Sec6}
\setcounter{equation}{0}
This section is devoted to building the bridge between the oscillation space  and Besov space in the Bessel setting, and then to providing the proof of Theorems \ref{schatten} and \ref{schatten1}.
\subsection{The case: $\ n+1< p=q<\infty$}
\begin{proposition}\label{beeee1}
Suppose $n+1< p<\infty$. Assume that $f\in L^p_{\rm loc}(\mathbb{R}^{n+1}_+,dm_\lambda)$. Then $f\in{\rm OSC}_{p,p}(\mathbb{R}_+^{n+1},dm_\lambda)$ if and only if $f\in B_{p}(\mathbb{R}_+^{n+1},dm_\lambda)$.
\end{proposition}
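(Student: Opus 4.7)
The plan is to prove the two-sided bound $\|f\|_{B_{p}(\mathbb{R}_+^{n+1},dm_\lambda)}\approx \|f\|_{{\rm OSC}_{p,p}(\mathbb{R}_+^{n+1},dm_\lambda)}$, from which the equivalence of membership is immediate. Throughout, the index $p=q$ collapses the mixed Lorentz norm to an ordinary $\ell^p$ sum $\sum_\nu\sum_{Q\in\mathcal{D}^\nu}MO_Q(f)^p$, so the whole argument reduces to comparing this sum with a double Besov integral.

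For the direction $B_{p}\Rightarrow {\rm OSC}_{p,p}$, I would start from the trivial Jensen inequality
$$MO_Q(f)^p\leq\fint_Q\!\fint_Q|f(x)-f(y)|^p\,dm_\lambda(y)\,dm_\lambda(x),$$
sum over $Q\in\mathcal{D}^\nu$, and swap the order of summation and integration to arrive at
$$\sum_{Q\in\mathcal{D}^\nu}MO_Q(f)^p\leq \int\!\!\int|f(x)-f(y)|^p\Bigl(\sum_{Q\in\mathcal{D}^\nu:\,x,y\in Q}\tfrac{1}{m_\lambda(Q)^2}\Bigr)\,dm_\lambda(y)\,dm_\lambda(x).$$
The inner sum is the place where the geometry of $(\mathbb{R}_+^{n+1},dm_\lambda)$ enters: any $Q\ni x,y$ must have $\ell(Q)\gtrsim|x-y|$, and for each generation $k$ with $\delta^k\gtrsim|x-y|$ there is at most one such $Q$. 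Combining the containment \eqref{eq:contain} with the lower--dimension part of the doubling bound \eqref{doub} gives $m_\lambda(Q_k)\gtrsim(\delta^k/|x-y|)^{n+1}m_\lambda(B_{\mathbb{R}_+^{n+1}}(x,|x-y|))$, and the resulting geometric series in $(|x-y|/\delta^k)^{2(n+1)}$ converges to a constant, producing the pointwise bound $\sum_{Q\ni x,y}m_\lambda(Q)^{-2}\lesssim m_\lambda(B_{\mathbb{R}_+^{n+1}}(x,|x-y|))^{-2}$ which is exactly the Besov weight.

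For the reverse direction ${\rm OSC}_{p,p}\Rightarrow B_{p}$, I would slice the Besov integral by the scale of $|x-y|$: for each pair $(x,y)$ choose via Lemma \ref{thm:existence2} an adjacent dyadic cube $Q\in\mathcal{D}^\iota$ with $\delta^{k+3}<|x-y|\leq\delta^{k+2}$ and $Q\supseteq B_{\mathbb{R}_+^{n+1}}(x,|x-y|)$ of comparable size, so that $m_\lambda(B_{\mathbb{R}_+^{n+1}}(x,|x-y|))\approx m_\lambda(Q)$. Grouping pairs according to this selected $Q$ yields
$$\|f\|_{B_p}^p\lesssim \sum_{\iota=1}^\kappa\sum_{Q\in\mathcal{D}^\iota}\frac{1}{m_\lambda(Q)^2}\int_{Q^*}\!\int_{Q^*}|f(x)-f(y)|^p\,dm_\lambda(x)\,dm_\lambda(y),$$
where $Q^*$ is a fixed dilate of $Q$. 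Applying the elementary inequality $|f(x)-f(y)|^p\lesssim |f(x)-(f)_{Q^*}|^p+|f(y)-(f)_{Q^*}|^p$ and integrating in one variable bounds the double integral by $m_\lambda(Q^*)^2\,MO^p_{Q^*}(f)^p$. This produces $\sum_Q MO^p_{Q^*}(f)^p$, which by Lemma \ref{technic} (the summability self--improvement) is comparable to $\sum_Q MO_{Q^*}(f)^p$; a further adjacent--system comparison absorbs the dilation $Q\mapsto Q^*$ and returns $\|f\|_{{\rm OSC}_{p,p}}^p$.

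The main obstacle I expect is bookkeeping rather than analysis: one must carefully handle the interplay between the countably many adjacent dyadic systems $\{\mathcal{D}^\nu\}$, the dilations $Q^*$, and the fact that $dm_\lambda$ is a reverse--doubling measure whose upper and lower dimensions differ, so that in every estimate one has to use the correct dimension (lower for controlling $m_\lambda(Q)^{-2}$ from above, upper for absorbing bounded--overlap factors). The hypothesis $p>n+1$ plays no role in these comparisons themselves; it is enforced only to stay within the non--trivial range guaranteed by Theorem \ref{schatten1}.
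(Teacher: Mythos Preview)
Your proposal is correct and follows essentially the same route as the paper: for $B_p\Rightarrow{\rm OSC}_{p,p}$ the paper also passes through $\fint_Q\fint_Q|f(x)-f(y)|^p$ and compares $m_\lambda(Q)^{-2}$ with the Besov weight (your explicit geometric sum over $Q\ni x,y$ is in fact clearer than the paper's somewhat telegraphic display); for ${\rm OSC}_{p,p}\Rightarrow B_p$ the paper likewise slices the Besov integral by scale, reduces to $MO^p$ over enlarged cubes, and then invokes the adjacent--system comparison together with Lemma~\ref{technic}, exactly as you outline.
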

\begin{proof}
To begin with, for any $f\in L^p_{\rm loc}(\mathbb{R}^{n+1}_+,dm_\lambda)$, it follows from inequality \eqref{doub} that
\begin{align}\label{kqbe}
\|f\|_{B_{p}(\mathbb{R}_+^{n+1},dm_\lambda)}
&=\left(\int_{\mathbb{R}_{+}^{n+1}}\int_{\mathbb{R}_{+}^{n+1}}\frac{|f(x)-f(y)|^{p}}{m_\lambda(B_{\mathbb{R}_+^{n+1}}(x,|x-y|))^{2}}dm_\lambda(x)dm_\lambda(y)\right)^{1/p}\nonumber\\
&\approx\left(\sum_{k\in\mathbb{Z}}\iint_{\{(x,\,y)\in \mathbb{R}_+^{n+1}\times\mathbb{R}_+^{n+1}:2^{-k-1}\leq|x-y|\leq 2^{-k}\}}\frac{|f(x)-f(y)|^{p}}{m_\lambda(B_{\mathbb{R}_+^{n+1}}(x,\,2^{-k}))^{2}}dm_\lambda(x)dm_\lambda(y)\right)^{1/p}\nonumber\\
&\approx\left(\sum_{k\in\mathbb{Z}}\sum_{Q\in\mathcal{D}_k^0}\fint_Q\fint_{B_{\mathbb{R}_+^{n+1}}(c_Q,\,c_n2^{-k})}|f(x)-f(y)|^{p}dm_\lambda(x)dm_\lambda(y)\right)^{1/p},
\end{align}
where $c_Q$ is denoted by the center of $Q$ and $c_n$ is a constant depending on $n$. Applying inequalities \eqref{doub}, \eqref{dvbh} and \eqref{dvbh2}, we deduce that
\begin{align*}
{\rm RHS}\ {\rm of}\ \eqref{kqbe} 
&\lesssim\left(\sum_{Q\in\mathcal{D}^0}\fint_{B_{\mathbb{R}_+^{n+1}}(c_Q,\,c_n\ell(Q))}|f(x)-(f)_Q|^{p}dm_\lambda(x)\right)^{1/p}\\
&\hspace{1.0cm}+\left(\sum_{Q\in\mathcal{D}^0}\fint_Q|f(y)-(f)_Q|^{p}dm_\lambda(y)\right)^{1/p}\\
&\lesssim \|f\|_{{\rm OSC}_{p,p}(\mathbb{R}_+^{n+1},dm_\lambda)}.
\end{align*}

On the other hand, for any $\nu\in\{1,2,\ldots,\kappa\}$, by inequality \eqref{doub},
\begin{align*}
\left(\sum_{Q\in\mathcal{D}^\nu}\fint_Q|f(x)-f(y)|^pdm_\lambda(x)\right)^{1/p}
&\leq \left(\sum_{Q\in\mathcal{D}^\nu}\fint_Q\left|\fint_Qf(x)-f(y)dm_\lambda(y)\right|^pdm_\lambda(x)\right)^{1/p}\\
&\lesssim \left(\sum_{Q\in\mathcal{D}^\nu}\int_Q\int_Q\frac{|f(x)-f(y)|^p}{m_\lambda(B_{\mathbb{R}_+^{n+1}}(x,\,|x-y|))^{2}}dm_\lambda(y)dm_\lambda(x)\right)^{1/p}\\
&\lesssim \|f\|_{B_{p}(\mathbb{R}_+^{n+1},dm_\lambda)}.
\end{align*}
This implies that $\|f\|_{{\rm OSC}_{p,p}(\mathbb{R}_+^{n+1},dm_\lambda)}\lesssim \|f\|_{B_{p}(\mathbb{R}_+^{n+1},dm_\lambda)}$ and then ends the proof of Proposition \ref{beeee1}.
\end{proof}

{\it Proof of Theorem \ref{schatten}.}

Combining Propositions \ref{divide2}, \ref{weakpro} and \ref{beeee1}, we finish the proof of Theorem \ref{schatten} directly.
\subsection{The case: $\ 0<p<n+1, 0<q\leq \infty,\ {\rm or}\ p=n+1, 0< q<\infty$}
\begin{lemma}\label{twocubes}
For any $k\in\mathbb{Z}$, cube $Q\in \mathcal{D}_{k}^0$ and $a_{Q,j}=\pm 1(j=1,2,\ldots,n+1)$, there are cubes $A_Q \in \mathcal{D}_{k+2}^0, B_Q \in \mathcal{D}_{k+2}^0$ such that $A_Q\subset Q, B_Q \subset Q$ and if $x=(x_1,\,x_2,\,\ldots,\,x_{n+1})\in A_Q$, $y= (y_1,\,y_2,\,\ldots,\,y_{n+1})\in B_Q$, then $a_{Q,j}(x_j-y_j)\geq 2^{-k-1}$ for $j=1,2,\ldots,n+1$.
\end{lemma}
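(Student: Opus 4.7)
The proof is essentially a direct construction, exploiting the fact that refining two dyadic generations splits each coordinate of $Q$ into four equal intervals, and that the two extremal intervals are separated by a gap of two intervals, i.e., of length $2 \cdot 2^{-k-2} = 2^{-k-1}$.

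First I would fix coordinates. Since $Q \in \mathcal{D}_k^0$, we can write
\begin{equation*}
Q = \prod_{j=1}^{n+1} \bigl[c_j,\, c_j + 2^{-k}\bigr)
\end{equation*}
for some reference point $(c_1,\ldots,c_{n+1})$. The subcubes in $\mathcal{D}_{k+2}^0$ contained in $Q$ are then indexed by tuples $(i_1,\ldots,i_{n+1}) \in \{0,1,2,3\}^{n+1}$ via
\begin{equation*}
Q_{(i_1,\ldots,i_{n+1})} = \prod_{j=1}^{n+1} \bigl[c_j + i_j\, 2^{-k-2},\; c_j + (i_j+1)\, 2^{-k-2}\bigr).
\end{equation*}

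Next I would make the coordinate-wise selection dictated by the signs $a_{Q,j}$. For each $j \in \{1,\ldots,n+1\}$, set
\begin{equation*}
(i_j^A, i_j^B) :=
\begin{cases}
(3,0), & \text{if } a_{Q,j} = +1, \\
(0,3), & \text{if } a_{Q,j} = -1,
\end{cases}
\end{equation*}
and define $A_Q := Q_{(i_1^A,\ldots,i_{n+1}^A)}$ and $B_Q := Q_{(i_1^B,\ldots,i_{n+1}^B)}$. Both are cubes in $\mathcal{D}_{k+2}^0$ contained in $Q$ by construction.

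Finally I would verify the separation inequality. For $x \in A_Q$ and $y \in B_Q$, and for each $j$ with $a_{Q,j} = +1$, one has
\begin{equation*}
x_j - y_j \geq \bigl(c_j + 3 \cdot 2^{-k-2}\bigr) - \bigl(c_j + 2^{-k-2}\bigr) = 2 \cdot 2^{-k-2} = 2^{-k-1},
\end{equation*}
so $a_{Q,j}(x_j - y_j) \geq 2^{-k-1}$. For $j$ with $a_{Q,j} = -1$, the symmetric calculation gives $y_j - x_j \geq 2^{-k-1}$, hence $a_{Q,j}(x_j - y_j) = -(x_j - y_j) \geq 2^{-k-1}$. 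This is the required estimate in both cases. There is no real obstacle here; the only mild subtlety is choosing the correct extremal indices $(3,0)$ versus, say, $(3,1)$, but the latter would yield only $2^{-k-2}$ rather than $2^{-k-1}$, which is why exactly two generations of refinement is the natural choice.
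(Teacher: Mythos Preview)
Your proof is correct and follows essentially the same direct construction as the paper: pick opposite grandchild subcubes of $Q$ in $\mathcal{D}_{k+2}^0$ according to the signs $a_{Q,j}$. Your choice of the extremal indices $(3,0)$ and $(0,3)$ differs slightly from the paper's $(2,0)$ and $(0,2)$, and in fact yours is the one that delivers the stated separation $2^{-k-1}$ (the paper's vertices as written give only $2^{-k-2}$, a harmless slip since any bound of the form $c\cdot 2^{-k}$ suffices for the subsequent argument).
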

\begin{proof}
By symmetry, it suffices to consider the case that $Q\subset\mathbb{R}_+\times\cdots\times\mathbb{R}_+$. Suppose that the vertex of $Q$ is at $2^{-k}m$ for some $k\in\mathbb{Z}$ and $m=(m_1,\ldots,m_{n+1})\in\mathbb{N}_+\times\cdots\times\mathbb{N}_+$. Then we choose cubes $A_Q \in \mathcal{D}_{k+2}^0, B_Q \in \mathcal{D}_{k+2}^0$ with vertices at $(2^{-k}m_1+2^{-k-2}+a_{Q,1}2^{-k-2},\ldots,2^{-k}m_{n+1}+2^{-k-2}+a_{Q,n+1}2^{-k-2})$ and $(2^{-k}m_1+2^{-k-2}-a_{Q,1}2^{-k-2},\ldots,2^{-k}m_{n+1}+2^{-k-2}-a_{Q,n+1}2^{-k-2})$, respectively. Then these cubes satisfy the required properties in the statement.
\end{proof}

Now, we provide a lower bound for a local pseudo oscillation of the symbol $b$.
\begin{lemma}\label{lowerboundcommu}
Let $f\in C^2(\mathbb{R}_+^{n+1})$. Suppose that there is a point $x_0\in \mathbb{R}_+^{n+1}$ such that $\nabla f(x_0) \neq 0$. Then there exist constants $C>0, \epsilon>0$ and  $N>0$ such that if $k>N$, then for any cube $Q \in \mathcal{D}_{k}^0$ satisfying $|{\rm center}(Q)- x_0|<\epsilon$, we have
\begin{equation}\label{contonb}
\fint_{A_Q}|f(x)-(f)_{B_Q}|dm_\lambda(x)\geq C2^{-k}|\nabla f(x_0)|,
\end{equation}
where $A_Q$ and $B_Q$ are the cubes chosen in Lemma \ref{twocubes}.

\end{lemma}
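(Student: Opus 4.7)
The plan is to exploit the freedom in choosing the signs $\{a_{Q,j}\}_{j=1}^{n+1}$ that parameterize the twin subcubes from Lemma \ref{twocubes}. By aligning these signs with the direction of $\nabla f(x_0)$, I will force $f(x)-f(y)$ to have a definite positive sign throughout $A_Q\times B_Q$, so that the $L^1$ average of $|f(x)-(f)_{B_Q}|$ coincides with a true average that can be bounded below directly.

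Concretely, set $a_{Q,j}:=\operatorname{sgn}(\partial_{x_j}f(x_0))$, with the convention $\operatorname{sgn}(0):=+1$, and take $A_Q,B_Q\in\mathcal{D}_{k+2}^0$ produced by Lemma \ref{twocubes} for this sign pattern. Then for any $(x,y)\in A_Q\times B_Q$ we have $a_{Q,j}(x_j-y_j)\geq 2^{-k-1}$ and $|x_j-y_j|\leq 2^{-k}$, since both points lie in $Q$. The mean value theorem on the segment from $y$ to $x$ (which stays inside $Q\subset\mathbb{R}_+^{n+1}$) produces some $\xi=\xi(x,y)$ with $f(x)-f(y)=\nabla f(\xi)\cdot(x-y)$, which I split as
\[
f(x)-f(y)=\nabla f(x_0)\cdot(x-y)+\bigl(\nabla f(\xi)-\nabla f(x_0)\bigr)\cdot(x-y).
\]
The sign choice makes the main term collapse to a sum of nonnegative contributions:
\[
\nabla f(x_0)\cdot(x-y)=\sum_{j=1}^{n+1}|\partial_{x_j}f(x_0)|\,a_{Q,j}(x_j-y_j)\geq 2^{-k-1}\sum_{j=1}^{n+1}|\partial_{x_j}f(x_0)|\geq 2^{-k-1}|\nabla f(x_0)|,
\]
using the elementary inequality $\|v\|_{1}\geq\|v\|_{2}$ for $v\in\mathbb{R}^{n+1}$.

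For the error term, continuity of $\nabla f$ (ensured by $f\in C^2$) allows me to choose $\epsilon>0$ and $N\in\mathbb{N}$ so that $|\nabla f(\zeta)-\nabla f(x_0)|\leq |\nabla f(x_0)|/(4\sqrt{n+1})$ whenever $|\zeta-x_0|\leq\epsilon+\sqrt{n+1}\cdot 2^{-N}$. For $k>N$ and $|\operatorname{center}(Q)-x_0|<\epsilon$, every such $\xi$ satisfies $|\xi-x_0|\leq\epsilon+\sqrt{n+1}\cdot 2^{-k}$, so the error is at most $\tfrac{1}{4}|\nabla f(x_0)|\cdot 2^{-k}$, yielding $f(x)-f(y)\geq 2^{-k-2}|\nabla f(x_0)|$ pointwise on $A_Q\times B_Q$. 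Averaging in $y$ over $B_Q$ against $dm_\lambda$ gives $f(x)-(f)_{B_Q}\geq 2^{-k-2}|\nabla f(x_0)|>0$ for every $x\in A_Q$, so the absolute values may be dropped, and averaging over $A_Q$ preserves the bound and delivers \eqref{contonb} with $C=1/4$.

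I expect no serious obstacle here. The Bessel weight $dm_\lambda$ enters only through the positivity of $m_\lambda(A_Q)$ and $m_\lambda(B_Q)$, which is automatic since these cubes lie in $\mathbb{R}_+^{n+1}$. The single delicate point is the sign choice: taking the $a_{Q,j}$ arbitrarily would allow cancellation among the terms $\partial_{x_j}f(x_0)(x_j-y_j)$, and the coordinatewise separation guaranteed by Lemma \ref{twocubes} would be wasted.
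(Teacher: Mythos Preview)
Your proof is correct and follows essentially the same approach as the paper: both choose the signs $a_{Q,j}$ to align with the gradient (you use $\operatorname{sgn}\partial_{x_j}f(x_0)$, the paper uses $\operatorname{sgn}\partial_{x_j}f(c_Q)$) and then show the linear part of a Taylor expansion dominates the remainder for small cubes near $x_0$. The only cosmetic difference is that you use the mean value theorem plus continuity of $\nabla f$ to obtain pointwise positivity of $f(x)-f(y)$ on $A_Q\times B_Q$, whereas the paper applies second-order Taylor at $c_Q$ and the triangle inequality; your version is slightly cleaner and in fact only requires $f\in C^1$.
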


\begin{proof}
Denote by $c_Q := (c_Q^1,c_Q^2,\ldots,c_Q^{n+1})$ the center of $Q$ and $x = (x_1, x_2,\ldots,x_{n+1})$, then it follows from Taylor's formula that
\begin{equation}\label{usetay}
 f(x) = f(c_Q) + \sum_{j=1}^{n+1}{(\partial_{x_j}f)(c_Q)}(x_j-c_{Q}^j) + R(x,\,c_Q),
\end{equation}
where the remainder term $R(x,\,c_Q)$ satisfies
\begin{equation}\label{laste}
 |R(x,\,c_Q)|\leq C\sum_{j=1}^{n+1}\sum_{k=1}^{n+1} \sup_{\theta\in[0,1]}\big|(\partial_{x_j}\partial_{x_k}f)(x+\theta(c_Q-x))\big| |x - c_Q|^{2}
\end{equation}
for some $\theta\in[0,1]$. Note that if $x\in Q$, then
$$|x+\theta(c_Q-x)-c_Q|\lesssim 2^{-k}.$$
By  Lemma \ref{twocubes}, for $x'=(x_1',\,x_2',\,...,\,x_{n+1}')\in A_Q$ and $x''=(x_{1}'',\,x_{2}'',\,...,\,x_{n+1}'')\in B_Q$, we have
\begin{equation*}
    {\rm sgn}(\partial_{x_j}f)(c_Q)(x_j'-x_j'')\geq 2^{-k}, \quad j=1,2,\ldots,n+1.
\end{equation*}
Combining the above facts, we deduce that
\begin{align*}
\fint_{A_Q}|f(x)-(f)_{B_Q}|dm_\lambda(x) & \geq  \fint_{A_Q}\bigg|\fint_{B_Q}\sum_{j=1}^{n+1}{(\partial_{x_j}f)(c_Q)}(x_j'-x_j'')dm_\lambda(x'')\bigg| dm_\lambda(x') \\
&\hspace{1.0cm}- \fint_{A_Q}|R(x',\,c_Q)|dm_\lambda(x') - \fint_{B_Q}|R(x'',\,c_Q)|dm_\lambda(x'')  \\&\geq C\sum_{j=1}^{n+1}|(\partial_{x_j}f)(c_Q)|2^{-k} -C2^{-2k}\|\nabla^2 f\|_{L^\infty(B(x_0,\,1))}\\
&\geq C2^{-k} |\nabla f(x_0)|,
\end{align*}
where in the last inequality $\epsilon$ is chosen to be a sufficiently small constant depending on $f$. This completes the proof of Lemma \ref{lowerboundcommu}.
\end{proof}

\begin{proposition}\label{aux}
Suppose that $0<p<n+1$, $0<q\leq \infty$, or $p=n+1$, $0< q<\infty$. Assume that $f\in C^2(\mathbb{R}^{n+1}_+)$.  Then $f\in{\rm OSC}_{p,q}(\mathbb{R}_+^{n+1},dm_\lambda)$ if and only if $f$ is a constant on $\mathbb{R}_+^{n+1}$.
\end{proposition}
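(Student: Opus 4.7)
The ``if'' direction is immediate, since $MO_Q(f)=0$ for every cube $Q$ when $f$ is constant. For the converse I argue by contrapositive: assuming $f\in C^2(\mathbb{R}_+^{n+1})$ is not constant, I will show that $\|f\|_{{\rm OSC}_{p,q}(\mathbb{R}_+^{n+1},dm_\lambda)}=+\infty$. Since $\mathbb{R}_+^{n+1}$ is open and connected, there exists $x_0\in\mathbb{R}_+^{n+1}$ with $\nabla f(x_0)\neq 0$, and by relocating $x_0$ slightly I may assume that a full Euclidean neighborhood $B(x_0,\epsilon)$ lies in the interior of $\mathbb{R}_+^{n+1}$, where $\epsilon$ and $N$ are the constants furnished by Lemma \ref{lowerboundcommu}.

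For each integer $k>N$ and each $Q\in \mathcal{D}^0_k$ with $|c_Q-x_0|<\epsilon$, inspection of the proof of Lemma \ref{lowerboundcommu} shows that the sign choices of Lemma \ref{twocubes} render each summand $(\partial_{x_j}f)(c_Q)(x_j'-x_j'')$ nonnegative, so in fact one obtains the stronger estimate
\begin{equation*}
|(f)_{A_Q}-(f)_{B_Q}|\gtrsim 2^{-k}\,|\nabla f(x_0)|.
\end{equation*}
Since $A_Q,B_Q\subset Q$ lie two dyadic generations below $Q$ in $\mathcal{D}^0$ and $Q\subset B(x_0,2\epsilon)\subset \mathbb{R}_+^{n+1}$, inequality \eqref{doub} gives $m_\lambda(Q)\sim m_\lambda(A_Q)\sim m_\lambda(B_Q)$, and Jensen's inequality applied on $A_Q$ and $B_Q$ yields
\begin{equation*}
|(f)_{A_Q}-(f)_{B_Q}|\le \fint_{A_Q}|f-(f)_Q|\,dm_\lambda+\fint_{B_Q}|f-(f)_Q|\,dm_\lambda \lesssim MO_Q(f).
\end{equation*}
Hence $MO_Q(f)\gtrsim 2^{-k}|\nabla f(x_0)|$ for every such $Q$.

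For each sufficiently large $k>N$, the number of cubes $Q\in\mathcal{D}^0_k$ whose center lies in $B(x_0,\epsilon)$ is at least $c_\epsilon\,2^{k(n+1)}$, so the non-increasing rearrangement $\{a_j^*\}$ of $\{MO_Q(f)\}_{Q\in\mathcal{D}^0}$ obeys $a_j^*\gtrsim j^{-1/(n+1)}$ for all $j$ sufficiently large. A direct evaluation of the $\ell^{p,q}$ quasi-norm then yields a contradiction in every regime allowed by the hypothesis. For $0<p<n+1$ and $0<q<\infty$, the sum $\sum_j (a_j^*)^q j^{q/p-1}\gtrsim \sum_j j^{q/p-q/(n+1)-1}$ diverges because the exponent strictly exceeds $-1$; for $p=n+1$ and $0<q<\infty$, the same sum reduces to $\sum_j j^{-1}=+\infty$; finally, for $0<p<n+1$ and $q=\infty$, $\sup_j j^{1/p}a_j^*\gtrsim \sup_j j^{1/p-1/(n+1)}=+\infty$. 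In every case $\|f\|_{{\rm OSC}_{p,q}(\mathbb{R}_+^{n+1},dm_\lambda)}=+\infty$, contradicting the hypothesis.

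The delicate step is the first one, namely extracting from Lemma \ref{lowerboundcommu} a lower bound on $MO_Q(f)$ itself, rather than on the hybrid average $\fint_{A_Q}|f-(f)_{B_Q}|$ that Jensen's inequality places \emph{above} the quantity of interest. The point is that in the proof of that lemma the leading linear contribution $(f)_{A_Q}-(f)_{B_Q}$ has a definite sign by Lemma \ref{twocubes}, so it is actually the dominant quantity rather than merely a dominated one; together with the uniform comparability of $m_\lambda(Q)$, $m_\lambda(A_Q)$, $m_\lambda(B_Q)$ (valid precisely because $x_0$ was chosen away from $\{x_{n+1}=0\}$), this produces the lower bound on $MO_Q(f)$. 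The borderline case $p=n+1,\ q=\infty$ is necessarily excluded: the same calculation gives $\sup_j j^{1/p}a_j^*\lesssim 1$ there, consistent with the weak-type cut-off seen in the classical Euclidean setting.
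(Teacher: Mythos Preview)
Your proof is correct and follows essentially the same strategy as the paper's: locate $x_0$ with $\nabla f(x_0)\neq 0$, use Lemmas~\ref{twocubes} and~\ref{lowerboundcommu} to get $MO_Q(f)\gtrsim 2^{-k}$ for $\gtrsim 2^{k(n+1)}$ cubes at each fine scale $k$, and then check that the resulting sequence fails to lie in $\ell^{p,q}$ in the stated range.

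One remark on presentation: your detour through $|(f)_{A_Q}-(f)_{B_Q}|$ and ``inspection of the proof'' of Lemma~\ref{lowerboundcommu} is unnecessary. The hybrid average $\fint_{A_Q}|f-(f)_{B_Q}|\,dm_\lambda$ furnished by the lemma's \emph{statement} already sits below $MO_Q(f)$, not above it: since $A_Q,B_Q\subset Q$ with $m_\lambda(A_Q)\sim m_\lambda(B_Q)\sim m_\lambda(Q)$, one has
\[
\fint_{A_Q}|f-(f)_{B_Q}|\,dm_\lambda
\le \fint_{A_Q}|f-(f)_Q|\,dm_\lambda + |(f)_Q-(f)_{B_Q}|
\lesssim MO_Q(f),
\]
which is exactly the paper's inequality~\eqref{rhsssq}. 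So the lemma as stated already delivers the lower bound on $MO_Q(f)$; there is no need to re-open its proof. Your computation of the $\ell^{p,q}$ norm via the rearrangement $a_j^*\gtrsim j^{-1/(n+1)}$ is equivalent to the paper's distribution-function calculation and is arguably cleaner.
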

\begin{proof}
We will show Proposition \ref{aux} by contradiction. If $f$ is not a constant on $\mathbb{R}_+^{n+1}$, then there exists a point $x_0=(x_0^{(1)},\ldots,x_0^{(n+1)})\in \mathbb{R}_{+}^{n+1}$ such that $\nabla f(x_0) \neq 0$. By Lemma \ref{lowerboundcommu}, there exist constants $C>0, \epsilon>0$ and  $N>0$ such that if $k>N$, then for any cube $Q \in \mathcal{D}_{k}^0$ satisfying $|{\rm center}(Q)- x_0|<\epsilon$, we have
\begin{equation}\label{contonb}
\fint_{A_Q}|f(x)-(f)_{B_Q}|dm_\lambda(x)\geq C2^{-k}|\nabla f(x_0)|,
\end{equation}
where $A_Q$ and $B_Q$ are the cubes chosen in Lemma \ref{lowerboundcommu}. By inequality \eqref{doub}, for any $Q\in\mathcal{D}^0$,
\begin{align}
\fint_{A_Q}|f(x)-(f)_{B_Q}|dm_\lambda(x)
&\lesssim\fint_Q|f(x)-(f)_Q|dm_\lambda(x).\label{rhsssq}
\end{align}
Denote by $\mathcal{A}_k(x_0)$ the set consisting of $Q\in \mathcal{D}_{k}^0$ satisfying $|{\rm center}(Q)-x_0|<\epsilon$. Observe that for any $k>N$, the number of cubes in $\mathcal{A}_k(x_0)$ is at least $2^{k(n+1)}$, up to a harmless constant depending on $\epsilon$. To continue, note that the condition $f\in{\rm OSC}_{p,q}(\mathbb{R}_+^{n+1},dm_\lambda)$, together with \eqref{eq:ball;included}, implies that the $\ell^{p,q}$-norm of right-hand side on \eqref{rhsssq} indexed by $Q\in\mathcal{D}^0$ is finite. However, we will deduce a contradiction by dividing our proof into two cases.

 {\bf Case 1.} If $0<p\leq n+1$ and $q<\infty$, then
\begin{align*}
&\left\|\left\{\fint_Q|f(x)-(f)_Q|dm_\lambda(x)\right\}_{Q\in\mathcal{D}^0}\right\|_{\ell^{p,q}}\\
&\geq C\left\|\left\{\ell(Q)|\nabla f(x_0)|\right\}_{Q\in\cup_{k>N}\mathcal{A}_k(x_0)}\right\|_{\ell^{p,q}}\\
&= C\left(\int_0^{2^{-N}|\nabla f(x_0)|}\left(\left(\sum_{k>N}\#\left\{Q\in\mathcal{A}_k(x_0):2^{-k}|\nabla f(x_0)|>\lambda\right\}\right)\lambda^p\right)^{q/p}\frac{d\lambda}{\lambda}\right)^{1/q}\\
&= C\left(\sum_{\ell=N}^\infty\int_{2^{-\ell-1}|\nabla f(x_0)|}^{2^{-\ell}|\nabla f(x_0)|}\left(\left(\sum_{k>N}\#\left\{Q\in\mathcal{A}_k(x_0):2^{-k}|\nabla f(x_0)|>\lambda\right\}\right)\lambda^p\right)^{q/p}\frac{d\lambda}{\lambda}\right)^{1/q}\\
&= C\left(\sum_{\ell=N}^\infty\int_{2^{-\ell-1}|\nabla f(x_0)|}^{2^{-\ell}|\nabla f(x_0)|}\left(\left(\sum_{N<k<\ell}2^{k(n+1)}\right)(2^{-\ell}|\nabla f(x_0)|)^p\right)^{q/p}\frac{d\lambda}{\lambda}\right)^{1/q}\\
&\geq C|\nabla f(x_0)|\left(\sum_{\ell=N}^\infty 2^{\frac{\ell q(n+1-p)}{p}}\right)^{1/q}=+\infty,
\end{align*}
which is a contradiction.

{\bf Case 2.} If $0<p< n+1$ and  $q=\infty$, then
\begin{align*}
&\left\|\left\{\fint_Q|f(x)-(f)_Q|dm_\lambda(x)\right\}_{Q\in\mathcal{D}^0}\right\|_{\ell^{p,\infty}}\\
&\geq C\left\|\left\{\ell(Q)|\nabla f(x_0)|\right\}_{Q\in\cup_{k>N}\mathcal{A}_k(x_0)}\right\|_{\ell^{p,\infty}}\\
&= C\sup\limits_{\ell\in\mathbb{Z}:N\leq \ell<+\infty}\sup\limits_{2^{-\ell-1}|\nabla f(x_0)|\leq\lambda<2^{-\ell}|\nabla f(x_0)|}\lambda\left(\sum_{k>N}\#\left\{Q\in\mathcal{A}_k(x_0):\ell(Q)|\nabla f(x_0)|>\lambda\right\}\right)^{1/p}\\
&\geq C\sup\limits_{\ell\in\mathbb{Z}:N\leq \ell<+\infty}2^{-\ell}|\nabla f(x_0)|\left(\sum_{N<k<\ell}\#\left\{Q\in\mathcal{A}_k(x_0)\right\}\right)^{1/p}\\
&\geq C|\nabla f(x_0)|\sup\limits_{\ell\in\mathbb{Z}:N\leq \ell<+\infty}2^{\frac{\ell(n+1-p)}{p}}=+\infty,
\end{align*}
which is a contradiction.  Thus, the proof of Proposition \ref{aux} is complete.
\end{proof}

{\it Proof of Theorem \ref{schatten1}.}

By Propositions \ref{weakpro} and \ref{aux}, it remains to consider the case $0<p\leq 1$ or $0<q< 1$. To this end, since $\ell^{p,q}$ is increasing as the exponents $p$ and $q$ increase, the condition $[b,R_{\lambda,\ell}]\in S_\lambda^{p,q}$ implies that $[b,R_{\lambda,\ell}]\in S_\lambda^{p_0,q_0}$ for some $1<p_0\leq n+1$ and $1<q_0\leq \infty$ with $(p_0,\,q_0)\neq (n+1,\,\infty)$. This, in combination with Propositions \ref{weakpro} and \ref{aux} again, indicates that $b$ is a constant. Thus, the proof of Theorem \ref{schatten1} is complete.
\section{Appendix}\label{Appe}
In the Appendix, we show that $n+1$ is the optimal cut-off point of Schatten--Lorentz characterization stated in Theorems \ref{schatten} and \ref{schatten1}.
\begin{proposition}\label{count}
There exists a $C^\infty$ non-trivial function $f$ such that $f\in B_p(\mathbb R_+^{n+1},dm_\lambda)$ with $p>n+1$.
\end{proposition}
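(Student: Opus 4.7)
The natural candidate is a non-trivial bump function $f\in C_c^\infty(\mathbb{R}_+^{n+1})$ whose support is compactly contained in the \emph{open} half-space, so that it stays bounded away from the boundary $\{x_{n+1}=0\}$. The plan is to show that for any such $f$, one has $\|f\|_{B_p(\mathbb{R}_+^{n+1},dm_\lambda)}<+\infty$ whenever $p>n+1$.

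Fix compact sets $K:=\supp(f)\subset K'\subset \mathbb{R}_+^{n+1}$ with $\rho:=\text{dist}(K,\mathbb{R}_+^{n+1}\setminus K')>0$, and with $K'$ still compactly contained in the open half-space, so that $a\le x_{n+1}\le b$ on $K'$ for some $0<a<b<\infty$. Since the integrand defining $\|f\|_{B_p}^p$ vanishes once both $x$ and $y$ lie outside $K$, the double integral splits into three pieces:
\[
\Omega_1=K'\times K',\quad \Omega_2=K\times(\mathbb{R}_+^{n+1}\setminus K'),\quad \Omega_3=(\mathbb{R}_+^{n+1}\setminus K')\times K.
\]

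On $\Omega_1$, the estimate \eqref{measureeee} gives $m_\lambda(B_{\mathbb{R}_+^{n+1}}(x,r))\sim r^{n+1}$ uniformly in $x\in K'$ and $r\le \text{diam}(K')$, since $x_{n+1}$ is pinched between $a$ and $b$. Together with the Lipschitz bound $|f(x)-f(y)|\lesssim|x-y|$ coming from $f\in C_c^\infty$, the integrand on $\Omega_1$ is controlled by $C|x-y|^{p-2(n+1)}$. A standard polar-coordinate computation in $\mathbb{R}^{n+1}$ shows this is locally integrable if and only if $p-2(n+1)>-(n+1)$, namely $p>n+1$, yielding finiteness on $\Omega_1$ in our regime.

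On $\Omega_2$ (and $\Omega_3$ by symmetry), we have $f(y)=0$ and $|x-y|\ge\rho$, so $|f(x)-f(y)|^p\le\|f\|_\infty^p\chi_K(x)$. Decompose the $y$-integral dyadically over the annuli $\{|x-y|\sim 2^k\rho\}$ for $k\ge0$. By the doubling property \eqref{doub},
\[
\int_{|x-y|\sim 2^k\rho}\frac{dm_\lambda(y)}{m_\lambda(B_{\mathbb{R}_+^{n+1}}(x,|x-y|))^2}\lesssim \frac{1}{m_\lambda(B_{\mathbb{R}_+^{n+1}}(x,2^k\rho))}.
\]
Since $x_{n+1}\ge a>0$ on $K$, estimate \eqref{measureeee} gives $m_\lambda(B_{\mathbb{R}_+^{n+1}}(x,2^k\rho))\gtrsim 2^{k(n+1)}$, so the sum in $k$ converges. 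Integrating over $x\in K$ then gives a finite contribution from $\Omega_2$, and likewise for $\Omega_3$.

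Adding the three pieces yields $\|f\|_{B_p(\mathbb{R}_+^{n+1},dm_\lambda)}<+\infty$ for every $p>n+1$, completing the construction. The only mildly delicate point is keeping track of the two regimes of the Bessel measure in \eqref{measureeee}: on $\Omega_1$ the local behavior (where $r\ll x_{n+1}$ and the measure degenerates to an $(n+1)$-dimensional one weighted by $x_{n+1}^{2\lambda}$) is what produces the sharp threshold $p>n+1$; on $\Omega_2$ the far-field behavior (where the extra $r^{2\lambda}$ factor helps, but is not even needed) makes the tail harmless. This also explains why $n+1$, the \emph{lower} dimension of $(\mathbb{R}_+^{n+1},|\cdot|,dm_\lambda)$, is the actual cut-off, confirming the claim in the Remark following Theorem \ref{schatten1}.
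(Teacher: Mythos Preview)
Your proof is correct and follows essentially the same approach as the paper: both take a smooth bump supported away from the boundary $\{x_{n+1}=0\}$, split the double integral into a near-diagonal piece (where the Lipschitz bound and the local behavior $m_\lambda(B(x,r))\sim r^{n+1}$ produce the threshold $p>n+1$) and off-diagonal tail pieces (where the decay of $1/m_\lambda(B(x,r))$ makes the integral finite). Your dyadic-annulus treatment of the tail is slightly more streamlined than the paper's explicit two-case split on $|x-y|\lessgtr 10x_{0,n+1}$, and your ``by symmetry'' for $\Omega_3$ is justified by the standard doubling fact $m_\lambda(B_{\mathbb{R}_+^{n+1}}(x,|x-y|))\sim m_\lambda(B_{\mathbb{R}_+^{n+1}}(y,|x-y|))$, which you might state explicitly.
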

\begin{proof}
Let $x_0=(0,\ldots,0,x_{0,n+1})$ with $x_{0,n+1}$ being a sufficiently large constant. Let $f(x)$ be a smooth cut-off function supported in $B(x_0,\,1)$. Now we show that $f\in B_p(\mathbb R_+^{n+1},dm_\lambda)$ for any $p>n+1$.  Indeed,
\begin{align*}
\|f\|_{B_p}
&\leq\left(\int_{B(x_0,\,1)}\int_{B(x_0,\,1)}\frac{|f(x)-f(y)|^p}{{m_\lambda(B_{\mathbb{R}_+^{n+1}}}(x,\,|x-y|))^2}dm_\lambda(y)dm_\lambda(x)\right)^{1/p}\\
&\hspace{1.0cm}+\left(\int_{\mathbb{R}_+^{n+1}\backslash B(x_0,\,1)}\int_{B(x_0,\,1)}\frac{|f(x)-f(y)|^p}{{m_\lambda(B_{\mathbb{R}_+^{n+1}}}(x,\,|x-y|))^2}dm_\lambda(y)dm_\lambda(x)\right)^{1/p}\\
&\hspace{1.0cm}+\left(\int_{B(x_0,\,1)}\int_{\mathbb{R}_+^{n+1}\backslash B(x_0,\,1)}\frac{|f(x)-f(y)|^p}{{m_\lambda(B_{\mathbb{R}_+^{n+1}}}(x,\,|x-y|))^2}dm_\lambda(y)dm_\lambda(x)\right)^{1/p}\\
&={\rm I}+{\rm II}+{\rm III}.
\end{align*}

For the term ${\rm I}$, by inequality \eqref{doub} in combination with Taylor's formula,
\begin{align*}
{\rm I}&\lesssim \left(\int_{B(x_0,\,1)}\int_{B(x_0,\,1)}\frac{|f(x)-f(y)|^p}{|x-y|^{2(n+1)}}\prod_{j=1}^{n+1}dy_j\prod_{j=1}^{n+1}dx_j\right)^{1/p}\\
&\lesssim \|\nabla f\|_{\infty} \left(\int_{B(x_0,\,1)}\int_{B(x_0,\,1)}\frac{1}{|x-y|^{2(n+1)-p}}\prod_{j=1}^{n+1}dy_j\prod_{j=1}^{n+1}dx_j\right)^{1/p}\\
&<+\infty.
\end{align*}

For the term ${\rm II}$, by inequality \eqref{doub},
\begin{align*}
{\rm II}&\lesssim \left(\int_{B(x_0,\,1)}\int_{(\mathbb{R}_+^{n+1}\backslash B(x_0,\,1))\cap\{y\in\mathbb{R}_+^{n+1}:|x-y|\leq 10x_{0,n+1}\}}\frac{|f(x)-f(y)|^p}{|x-y|^{2(n+1)}}\prod_{j=1}^{n+1}dy_j\prod_{j=1}^{n+1}dx_j\right)^{1/p}\\
&\hspace{1.0cm}+ \left(\int_{B(x_0,\,1)}\int_{(\mathbb{R}_+^{n+1}\backslash B(x_0,\,1))\cap\{y\in\mathbb{R}_+^{n+1}:|x-y|\geq 10x_{0,n+1}\}}\frac{|f(x)-f(y)|^p}{|x-y|^{2(n+1+2\lambda)}}y_{n+1}^{2\lambda}\prod_{j=1}^{n}dy_jx_{n+1}^{2\lambda}\prod_{j=1}^{n}dx_j\right)^{1/p}\\
&=:{\rm II}_1+{\rm II}_2.
\end{align*}
To estimate the term ${\rm II}_1$, we apply Taylor's formula to see that
\begin{align*}
{\rm II}_1\lesssim \|\nabla f\|_\infty\left(\int_{B(x_0,\,1)}\int_{\{y\in\mathbb{R}_+^{n+1}:|x-y|\leq 10x_{0,n+1}\}}\frac{1}{|x-y|^{2(n+1)-p}}\prod_{j=1}^{n+1}dy_j\prod_{j=1}^{n+1}dx_j\right)^{1/p}<+\infty.
\end{align*}
To estimate the term ${\rm II}_2$, we note that $x_{n+1}\lesssim 1$ and $y_{n+1}\leq |y-x|+|x-x_0|+|x_0|\lesssim |x-y| $. This means that
\begin{align*}
{\rm II}_2\lesssim \|f\|_\infty\left(\int_{B(x_0,\,1)}\int_{\{y\in\mathbb{R}_+^{n+1}:|x-y|\geq 10x_{0,n+1}\}}\frac{1}{|x-y|^{2(n+1)+2\lambda}}\prod_{j=1}^{n}dy_j\prod_{j=1}^{n}dx_j\right)^{1/p}<+\infty.
\end{align*}
Combining these estimates, we conclude that $f\in B_p(\mathbb R_+^{n+1},dm_\lambda)$.
\end{proof}
\begin{remark}
By a similar argument, we can see that the function constructed in Proposition \ref{count} also satisfies $f\in {\rm OSC}_{p,q}(\mathbb{R}_+^{n+1},dm_\lambda)$ for all $p> n+1$ and $0<q< \infty$, or $p\geq n+1$ and $q=\infty$.
\end{remark}

\bigskip

 \noindent
 {\bf Acknowledgements:}

Z. Fan is Supported by the China Postdoctoral Science Foundation (No. 2023M740799) and Postdoctoral Fellowship Program of CPSF (No. GZB20230175). J. Li is supported by the Australian Research Council (ARC) through the research grant DP220100285. X. Xiong is Supported by National Natural Science Foundation of China, General Project 12371138. Z. Fan would like to thank Prof. Dongyong Yang for his talk about harmonic analysis associated with Bessel operator.


\begin{thebibliography}{10}

\bibitem{AFP}
J.~Arazy, S.~D. Fisher, and J.~Peetre.
\newblock Hankel operators on weighted {B}ergman spaces.
\newblock {\em Amer. J. Math.}, 110(6):989--1053, 1988.

\bibitem{MR3922045}
J.J. Betancor, X.T. Duong, J.~Li, B.D. Wick, and D.~Yang.
\newblock Product {H}ardy, {BMO} spaces and iterated commutators associated
  with {B}essel {S}chr\"{o}dinger operators.
\newblock {\em Indiana Univ. Math. J.}, 68(1):247--289, 2019.

\bibitem{MR2496404}
J.J. Betancor, J.~Dziuba\'{n}ski, and J.L. Torrea.
\newblock On {H}ardy spaces associated with {B}essel operators.
\newblock {\em J. Anal. Math.}, 107:195--219, 2009.

\bibitem{MR4361597}
Y.~Chen, X.T. Duong, J.~Li, W.~Tao, and D.~Yang.
\newblock Square roots of the {B}essel operators and the related
  {L}ittlewood-{P}aley estimates.
\newblock {\em Studia Math.}, 263(1):19--58, 2022.

\bibitem{CWbook}
R.~Coifman and G.~Weiss.
\newblock {\em Analyse harmonique non-commutative sur certains espaces
  homog\`enes}.
\newblock Lecture Notes in Mathematics, Vol. 242. Springer-Verlag, Berlin-New
  York, 1971.
\newblock \'{E}tude de certaines int\'{e}grales singuli\`eres.

\bibitem{MR4660138}
J.M. Conde-Alonso, A.M. Gonz\'{a}lez-P\'{e}rez, J.~Parcet, and E.~Tablate.
\newblock Schur multipliers in {S}chatten--von {N}eumann classes.
\newblock {\em Ann. of Math. (2)}, 198(3):1229--1260, 2023.

\bibitem{Connes}
A.~Connes.
\newblock {\em Noncommutative geometry}.
\newblock Academic Press, Inc., San Diego, CA, 1994.

\bibitem{CST}
A.~Connes, D.~Sullivan, and N.~Teleman.
\newblock Quasiconformal mappings, operators on {H}ilbert space, and local
  formulae for characteristic classes.
\newblock {\em Topology}, 33(4):663--681, 1994.

\bibitem{MR3212723}
J.~Delgado and M.~Ruzhansky.
\newblock Schatten classes on compact manifolds: kernel conditions.
\newblock {\em J. Funct. Anal.}, 267(3):772--798, 2014.

\bibitem{MR4312282}
J.~Delgado and M.~Ruzhansky.
\newblock Schatten--von {N}eumann classes of integral operators.
\newblock {\em J. Math. Pures Appl. (9)}, 154:1--29, 2021.

\bibitem{MR3650324}
J.~Delgado, M.~Ruzhansky, and N.~Tokmagambetov.
\newblock Schatten classes, nuclearity and nonharmonic analysis on compact
  manifolds with boundary.
\newblock {\em J. Math. Pures Appl. (9)}, 107(6):758--783, 2017.

\bibitem{DGKLWY}
X.T. Duong, R.~Gong, M.~Kuffner, J.~Li, B.D. Wick, and D.~Yang.
\newblock Two weight commutators on spaces of homogeneous type and
  applications.
\newblock {\em J. Geom. Anal.}, 31(1):980--1038, 2021.

\bibitem{MR3829612}
X.T. Duong, J.~Li, S.~Mao, H.~Wu, and D.~Yang.
\newblock Compactness of {R}iesz transform commutator associated with {B}essel
  operators.
\newblock {\em J. Anal. Math.}, 135(2):639--673, 2018.

\bibitem{MR3689327}
X.T. Duong, J.~Li, B.D. Wick, and D.~Yang.
\newblock Factorization for {H}ardy spaces and characterization for {BMO}
  spaces via commutators in the {B}essel setting.
\newblock {\em Indiana Univ. Math. J.}, 66(4):1081--1106, 2017.

\bibitem{MR4231681}
X.T. Duong, J.~Li, B.D. Wick, and D.~Yang.
\newblock Characterizations of product {H}ardy spaces in {B}essel setting.
\newblock {\em J. Fourier Anal. Appl.}, 27(2):Paper No. 24, 65, 2021.

\bibitem{MR3481176}
J.~Dziuba\'{n}ski, M.~Preisner, L.~Roncal, and P.R. Stinga.
\newblock Hardy spaces for {F}ourier-{B}essel expansions.
\newblock {\em J. Anal. Math.}, 128:261--287, 2016.

\bibitem{MR1359927}
D.E. Edmunds and V.D. Stepanov.
\newblock On the singular numbers of certain {V}olterra integral operators.
\newblock {\em J. Funct. Anal.}, 134(1):222--246, 1995.

\bibitem{MR1323687}
D.~Fan and Z.~Wu.
\newblock Norm estimates for iterated commutators on the {B}ergman spaces of
  the unit ball.
\newblock {\em Amer. J. Math.}, 117(2):523--543, 1995.

\bibitem{MR4552581}
Z.~Fan, M.~Lacey, and J.~Li.
\newblock Schatten classes and commutators of {R}iesz transform on {H}eisenberg
  group and applications.
\newblock {\em J. Fourier Anal. Appl.}, 29(2):Paper No. 17, 28, 2023.

\bibitem{MR4654013}
Z.~Fan, J.~Li, E.~McDonald, F.~Sukochev, and D.~Zanin.
\newblock Endpoint weak {S}chatten class estimates and trace formula for
  commutators of {R}iesz transforms with multipliers on {H}eisenberg groups.
\newblock {\em J. Funct. Anal.}, 286(1):Paper No. 110188, 72, 2024.

\bibitem{FR}
M.~Feldman and R.~Rochberg.
\newblock Singular value estimates for commutators and {H}ankel operators on
  the unit ball and the {H}eisenberg group.
\newblock In {\em Analysis and partial differential equations}, volume 122 of
  {\em Lecture Notes in Pure and Appl. Math.}, pages 121--159. Dekker, New
  York, 1990.

\bibitem{MR4549699}
R.L. Frank, F.~Sukochev, and D.~Zanin.
\newblock Asymptotics of singular values for quantum derivatives.
\newblock {\em Trans. Amer. Math. Soc.}, 376(3):2047--2088, 2023.

\bibitem{MR4667745}
Z.~Gong, J.~Li, and B.D. Wick.
\newblock Besov spaces, {S}chatten classes and weighted versions of the
  quantised derivative.
\newblock {\em Anal. Math.}, 49(4):971--1006, 2023.

\bibitem{MR3243734}
L.~Grafakos.
\newblock {\em Classical {F}ourier analysis}, volume 249 of {\em Graduate Texts
  in Mathematics}.
\newblock Springer, New York, third edition, 2014.

\bibitem{MR64284}
A.~Huber.
\newblock On the uniqueness of generalized axially symmetric potentials.
\newblock {\em Ann. of Math. (2)}, 60:351--358, 1954.

\bibitem{HK}
T.P. Hyt\"{o}nen and A.~Kairema.
\newblock Systems of dyadic cubes in a doubling metric space.
\newblock {\em Colloq. Math.}, 126(1):1--33, 2012.

\bibitem{MR3158507}
J.~Isralowitz.
\newblock Schatten {$p$} class commutators on the weighted {B}ergman space
  {$L_a^2(\Bbb B_n,{\rm d}v_\gamma)$} for {$2n/(n+1+\gamma)<p<\infty$}.
\newblock {\em Indiana Univ. Math. J.}, 62(1):201--233, 2013.

\bibitem{JW}
S.~Janson and T.H. Wolff.
\newblock Schatten classes and commutators of singular integral operators.
\newblock {\em Ark. Mat.}, 20(2):301--310, 1982.

\bibitem{LLW}
M.T. Lacey, J.~Li, and B.D. Wick.
\newblock Schatten classes and commutator in the two weight setting, I.
  {H}ilbert transform.
\newblock {\em Potential Anal.}, 60(2): 875–894,  2024. 

\bibitem{LMSZ}
S.~Lord, E.~McDonald, F.~Sukochev, and D.~Zanin.
\newblock Quantum differentiability of essentially bounded functions on
  {E}uclidean space.
\newblock {\em J. Funct. Anal.}, 273(7):2353--2387, 2017.

\bibitem{MR899655}
D.H. Luecking.
\newblock Trace ideal criteria for {T}oeplitz operators.
\newblock {\em J. Funct. Anal.}, 73(2):345--368, 1987.

\bibitem{MSX2018}
E.~McDonald, F.~Sukochev, and X.~Xiong.
\newblock Quantum differentiability on quantum tori.
\newblock {\em Comm. Math. Phys.}, 371(3):1231--1260, 2019.

\bibitem{MSX2019}
E.~McDonald, F.~Sukochev, and X.~Xiong.
\newblock Quantum differentiability on noncommutative {E}uclidean spaces.
\newblock {\em Comm. Math. Phys.}, 379(2):491--542, 2020.

\bibitem{MS}
B.~Muckenhoupt and E.~M. Stein.
\newblock Classical expansions and their relation to conjugate harmonic
  functions.
\newblock {\em Trans. Amer. Math. Soc.}, 118:17--92, 1965.

\bibitem{Ne}
Z.~Nehari.
\newblock On bounded bilinear forms.
\newblock {\em Ann. of Math. (2)}, 65:153--162, 1957.

\bibitem{P}
V.V. Peller.
\newblock Nuclearity of hankel operators.
\newblock {\em Mat. Sbornik}, 113:538--581, 1980.

\bibitem{MR4179877}
R.~Rahm, E.T. Sawyer, and B.D. Wick.
\newblock Weighted {A}lpert wavelets.
\newblock {\em J. Fourier Anal. Appl.}, 27(1):Paper No. 1, 41, 2021.

\bibitem{MR654483}
R.~Rochberg.
\newblock Interpolation by functions in {B}ergman spaces.
\newblock {\em Michigan Math. J.}, 29(2):229--236, 1982.

\bibitem{RS1988}
R.~Rochberg and S.~Semmes.
\newblock End point results for estimates of singular values of singular
  integral operators.
\newblock In {\em Contributions to operator theory and its applications
  ({M}esa, {AZ}, 1987)}, volume~35 of {\em Oper. Theory Adv. Appl.}, pages
  217--231. Birkh\"{a}user, Basel, 1988.

\bibitem{RS}
R.~Rochberg and S.~Semmes.
\newblock Nearly weakly orthonormal sequences, singular value estimates, and
  {C}alderon-{Z}ygmund operators.
\newblock {\em J. Funct. Anal.}, 86(2):237--306, 1989.

\bibitem{MR3494249}
G.~Rozenblum, M.~Ruzhansky, and D.~Suragan.
\newblock Isoperimetric inequalities for {S}chatten norms of {R}iesz
  potentials.
\newblock {\em J. Funct. Anal.}, 271(1):224--239, 2016.

\bibitem{MR4554083}
E.T. Sawyer and B.D. Wick.
\newblock Two weight {S}obolev norm inequalities for smooth
  {C}alder\'{o}n-{Z}ygmund operators and doubling weights.
\newblock {\em Math. Z.}, 303(4):Paper No. 81, 74, 2023.

\bibitem{MR3010276}
K.~Seip and E.H. Youssfi.
\newblock Hankel operators on {F}ock spaces and related {B}ergman kernel
  estimates.
\newblock {\em J. Geom. Anal.}, 23(1):170--201, 2013.

\bibitem{MR1766114}
V.D. Stepanov.
\newblock On the lower bounds for {S}chatten-von {N}eumann norms of certain
  {V}olterra integral operators.
\newblock {\em J. London Math. Soc. (2)}, 61(3):905--922, 2000.

\bibitem{MR4687434}
P.~Villarroya.
\newblock The {S}chatten {C}lasses of {C}alder\'{o}n--{Z}ygmund {O}perators.
\newblock {\em J. Fourier Anal. Appl.}, 30(1):Paper no. 9, 2024.

\bibitem{MR53289}
A.~Weinstein.
\newblock Generalized axially symmetric potential theory.
\newblock {\em Bull. Amer. Math. Soc.}, 59:20--38, 1953.

\bibitem{MR2823879}
D.~Yang and D.~Yang.
\newblock Real-variable characterizations of {H}ardy spaces associated with
  {B}essel operators.
\newblock {\em Anal. Appl. (Singap.)}, 9(3):345--368, 2011.

\bibitem{MR1087805}
K.~Zhu.
\newblock Schatten class {H}ankel operators on the {B}ergman space of the unit
  ball.
\newblock {\em Amer. J. Math.}, 113(1):147--167, 1991.

\end{thebibliography}
\end{document}